\newcommand{\goodgap}{\hspace{\subfigtopskip}\hspace{\subfigbottomskip}}
\newtheorem{theorem}{Theorem}[section]
\newtheorem{corollary}[theorem]{Corollary}
\newtheorem{lemma}[theorem]{Lemma}
\newtheorem{proposition}[theorem]{Proposition}
\theoremstyle{definition}
\newtheorem{definition}[theorem]{Definition}
\newtheorem{remark}[theorem]{Remark}
\newtheorem{example}[theorem]{Example}
\newcommand{\lra}{\longrightarrow}
\newcommand{\rr}{\mathbb{R}}
\newcommand{\ff}{\mathbb F}
\newcommand{\frakA}{\mathfrak A}
\newcommand{\frakg}{\mathfrak g}
\newcommand{\frakS}{\mathfrak S}
\newcommand{\bF}{\mathbf F}
\newcommand{\Lie}{\operatorname{Lie}}
\newcommand{\Ad}{\operatorname{Ad}}
\newcommand{\Hor}{\operatorname{Hor}}
\newcommand{\calA}{\mathcal A}
\newcommand{\scrA}{\mathscr A}
\newcommand{\calH}{\mathcal H}
\newcommand{\calL}{\mathcal L}
\newcommand{\calV}{\mathcal V}
\newcommand{\calX}{\mathcal X}
\newcommand{\calY}{\mathcal Y}
\newcommand{\subs}{\subset}
\newcommand{\pr}{\operatorname{pr}}
\newcommand{\norm}[1]{\lVert#1\rVert}
\newcommand{\Span}{\operatorname{span}}
\newcommand{\diag}{\operatorname{diag}}
\newcommand{\ti}[1]{\widetilde{{#1}}}
\newcommand{\baseRing}[1]{\ensuremath{\mathbb{#1}}}
\newcommand{\R}{\baseRing{R}}
\newcommand{\jdef}[1]{\emph{#1}}
\newcommand{\del}{\partial}
\newcommand{\CD}{\ensuremath{{\calA_d}}}
\newcommand{\HLc}[1]{\ensuremath{{h^{{#1}}}}}
\newcommand{\SG}{\ensuremath{G}}
\newcommand{\RS}{\ensuremath{\ti{\SG}\times (Q/\SG)}}
\begin{document}

\title[Remarks on structures and preservation]{Remarks on structures and preservation in forced discrete mechanical systems of Routh type}

\author{Mat\'ias I. Caruso}
\address{\textnormal{(M. I. Caruso)} Instituto Balseiro \\ Universidad Nacional de Cuyo - C.N.E.A. \hfill \break Av. Bustillo 9500 \\ San Carlos de Bariloche \\ R8402AGP \\ Argentina \hfill \break
	\indent CONICET}
\email{matias.caruso@ib.edu.ar}

\author{Javier Fern\'andez}
\address{\textnormal{(J. Fern\'andez)} Instituto Balseiro \\ Universidad Nacional de Cuyo - C.N.E.A. \hfill \break Av. Bustillo 9500 \\ San Carlos de Bariloche \\ R8402AGP \\ Argentina}
\email{jfernand@ib.edu.ar}

\author{Cora Tori}
\address{\textnormal{(C. Tori)} Depto. de Ciencias B\'asicas \\ Facultad de Ingenier\'ia \\ Universidad Nacional de La Plata.
	Calle 116 entre 47 y 48 \\ La Plata \\ Buenos Aires \\ 1900 \\ Argentina \hfill \break
	\indent Centro de Matem\'atica de La Plata (CMaLP)}
\email{cora.tori@ing.unlp.edu.ar}

\author{Marcela Zuccalli}
\address{\textnormal{(M. Zuccalli)} Depto. de Matem\'atica \\ Facultad de Ciencias Exactas \\ Universidad Nacional de La Plata.
	Calles 50 y 115 \\ La Plata \\ Buenos Aires \\ 1900 \\ Argentina \hfill \break
	\indent Centro de Matem\'atica de La Plata (CMaLP)}
\email{marce@mate.unlp.edu.ar}

\subjclass{Primary: 37J06, 70H33; Secondary: 70G75.}
\keywords{Geometric mechanics, forced discrete mechanical systems, discrete Routh reduction.}

\begin{abstract}
We study a type of forced discrete mechanical system $(Q,L_d,f_d)$
---that we name of Routh type--- whose (discrete) time-flow preserves
a symplectic structure on $Q\times Q$. That structure arises as the
pullback via the forced discrete Legendre transform of the canonical
symplectic structure on $T^*Q$ modified by a ``magnetic term''. One
example of this type of system is provided by the Lagrangian reduction
of a symmetric (unforced) discrete mechanical system in the Routh
style. In this particular case, we do not reduce by the full symmetry
group but, rather, by an appropriate isotropy subgroup. In this
context, the preserved symplectic structure can be alternatively seen
as the Marsden-Weinstein reduction of the canonical symplectic
structure $\omega_{L_d}$ on $Q\times Q$.
\end{abstract}

\maketitle

\section{Introduction}

Discrete mechanics can be used to construct computational algorithms
that are, in turn, used to model the evolution of (continuous time)
mechanical systems
(see~\cite{M-West}). Part
of the success of this application has been ascribed to the fact that,
in many instances, discrete mechanical systems have structural
properties that mirror those of their continuous counterparts. For
example, the time evolution of a Lagrangian mechanical system on a
configuration manifold $Q$ with Lagrangian function
$L : TQ \lra \mathbb{R}$ preserves a certain symplectic form $\omega_L$ on $TQ$, which can be seen as the pullback via the Legendre transform of the canonical symplectic form $\omega_Q$ on $T^*Q$. On the
discrete side, a discrete mechanical system on $Q$ with discrete
Lagrangian function $L_d:Q\times Q\lra \R$ preserves a
symplectic structure $\omega_{L_d}$ on $Q\times Q$ that can be seen as
the pullback via the discrete Legendre transform of
$\omega_Q$. Also, key techniques and points of view in the study
of both types of systems have been developed that run parallel to each
other; one such example is the theory of Lagrangian or variational
reduction.

The description of real world mechanical systems usually involves
external forces (friction, dissipation, etc.) that, somehow, preclude
the existence of certain conserved magnitudes. For example, in most
circumstances, the symplectic structures described above are not
preserved by the flow of a forced system. Still, in some cases, there
are alternative symplectic structures that are preserved. One such
instance occurs when the force of the system is an exact differential form. In that case, a modified Lagrangian can be defined, whose associated symplectic form is now preserved. One of the goals of this paper is to study another family of forces that exhibit the same phenomenon. These forces will be called {\it Routh forces} and their corresponding systems will be named \textit{of Routh type} (Definition \ref{def:routh system}).
%We will study different symplectic structures on $Q\times Q$ associated to a forced discrete mechanical system; then, for a particular type of force, prove that a symplectic structure is preserved by the corresponding flow.
The second goal of
the paper is to illustrate the reduction process considered
in~\cite{C-F-T-Z}
as applied to an unforced symmetric discrete mechanical system, using an affine discrete connection constructed out of the discrete momentum map and reducing with respect to the isotropy subgroup corresponding to the (constant) value the discrete momentum map takes on the trajectories one is interested in. The result of
this procedure ---the discrete Routh reduction--- is a dynamical
system whose trajectories can be
identified with those of a forced discrete mechanical system;
furthermore, the resulting forces are of the same type considered
before, so that we conclude that there is a symplectic structure that
is preserved by the flow of that system. Unraveling the construction
we give a precise description of that symplectic structure, which
turns out to be the pullback of a certain
closed $2$-form on $T^*Q$, recovering the
results of~\cite{J-L-M-W} (which are known to mirror their continuous counterpart) in a not necessarily abelian setting.

The plan for the paper is as follows. In Section 2 we review some
standard definitions and results, including those of a forced discrete
mechanical system $(Q,L_d,f_d)$. We also introduce closed $2$-forms
$\omega^+$ and $\omega^-$ on $Q\times Q$ that, for regular systems,
are symplectic; in fact, that symplecticity is equivalent to the
regularity of the system. In particular, we prove that if the system is of Routh type, then $\omega^+$ and $\omega^-$
are preserved by the flow of the system (Proposition \ref{prop:regularity}). If, in addition, the system
is regular, those symplectic structures are seen to be the pullback
via the $\pm$-forced discrete Legendre transforms of a certain closed $2$-form on $T^*Q$.

In Section 3 we review the notion of symmetry of a forced discrete
mechanical system and the Lagrangian reduction of such systems, as
presented
in~\cite{C-F-T-Z}. This
is a general construction that requires the specification of some
data: a principal connection and an affine discrete connection on the principal
bundle $\pi:Q\lra Q/G$. The reduced system is, so far, just a
dynamical system. Then, under some conditions on the group actions, we
construct the affine discrete connection for the reduction process whose horizontal manifold is $J_d^{-1}(\{\mu\})$, for a value
$\mu\in\frakg^*$, where $J_d:Q\times Q\lra \frakg^*$ is the discrete momentum map and $\frakg:=\Lie{G}$. In what follows, we restrict the symmetry group to
$G_\mu$, the isotropy subgroup of $\mu$ under the coadjoint
action. Using these constructions, the reduced dynamical system can be
naturally identified as a \emph{forced} discrete mechanical system $(Q/G_\mu,\breve{L}_\mu,\breve{f}_\mu)$. We prove (Theorem \ref{hor-sym-theor-mu}) that the trajectories of the original symmetric system with constant discrete momentum $\mu$ can be fully recovered using those of the forced discrete mechanical system
obtained by the combined process described above. Interestingly, the
discrete force of the resulting system $\breve{f}_\mu$ can be
completely characterized in terms of the continuous and affine
discrete connections used in the reduction process, allowing us to
prove that $\breve{f}_\mu$ is a Routh force. We end this section comparing this with the results of \cite{J-L-M-W} and showing that the forced discrete mechanical system $(Q/G_\mu,\breve{L}_\mu,\breve{f}_\mu)$ may be interpreted as the discrete Routh reduction for the case of a not necessarily abelian Lie group.

Since $\breve{f}_\mu$ is a Routh force, the discussion of Section 2 implies the existence of a closed $2$-form $\breve{\omega}^+$ on $(Q/G_\mu)\times (Q/G_\mu)$ preserved by the flow of the reduced system, whenever it exists. In Section 4 we study this form and see that it coincides with the symplectic form $\omega_\mu^d$ obtained as the symplectic (or Marsden--Weinstein) reduction of $\omega_{L_d}$. This has two consequences: first, $\breve{\omega}^+$ is a
symplectic structure, so that the reduced system is regular and,
second, $\omega^d_\mu = \breve{\omega}^+$ is the pullback by the forced discrete
Legendre transform of the canonical symplectic form of $T^*(Q/G_\mu)$ modified by a magnetic term, a discrete analogue of the property that holds for (continuous) Routh reduction (Theorem \ref{thm:symplecticity and discrete routh reduction}).

Last, in Section 5, we report on a numerical experiment with a central potential in the plane. We apply the reduction procedure described in \cite{ar:marsden_ratiu_scheurle:2000:reduction_theory_and_the_lagrange-routh_equations} to the continuous system and use the fourth order Runge--Kutta integrator to approximante the trajectories of the reduced system. On the other hand, we discretized the (unreduced) continuous Lagrangian using a Mid Point scheme and then reduced the resulting discrete system following the ideas of the previous sections. The variational integrator is then constructed using the solution of the corresponding discrete equations of motion. The results suggest possible ideas for future work, namely the error analysis of the Routh reduction and the study of the commutativity of the reduction and discretization procedures. They are not the purpose of this paper, but we believe that they are very worthwhile subjects of future study on their own.

\section{Forced discrete mechanical systems}

We begin this section by recalling the basic definitions concerning forced discrete mechanical systems and their discrete evolution (for more detalis, see \cite{M-West}). Then we discuss conditions that guarantee the existence of flow for this systems and analyze the evolution of some symplectic structures.

\subsection{Preliminaries and definitions}

Given an $n$-dimensional manifold, we consider the product manifold
$Q \times Q$ and $\pr_1 : Q \times Q \lra Q$ and
$\pr_2 : Q \times Q \lra Q$ the canonical projections on the first and
second factor, respectively.  Using the product structure of
$Q \times Q$, we have that
$$T(Q \times Q) \simeq \pr_{1}^{\ast} (TQ) \oplus \pr_{2}^{\ast}
(TQ),$$ where $\pr_{i}^{\ast} (TQ)$ denotes the pullback of the
tangent bundle $TQ \lra Q$ over $Q \times Q$ by $\pr_i$ for
$i=1,2$. If we define $j_1 : \pr_{1}^{\ast} (TQ) \lra T(Q \times Q)$
as $j_1(\delta q) := (\delta q,0)$ we have that $j_1$ is an
isomorphism of vector bundles between $\pr_{1}^{\ast} (TQ)$ and
$TQ^{-} := ker(T \pr_2) \subset T(Q \times Q)$. Similarly, defining
$j_2 : \pr_{2}^{*} (TQ) \lra T(Q \times Q)$ as
$j_2(\delta q) := (0,\delta q)$ identifies $\pr_{2}^{*} (TQ)$ with the
subbundle $TQ^{+} := ker(T \pr_1) \subset T(Q \times Q)$.

So, the decomposition $T(Q \times Q) = TQ^{-} \oplus TQ^{+}$ leads to
the decomposition
$$T^{\ast}(Q \times Q) = (TQ^{-})^{\circ} \oplus (TQ^{+})^{\circ}$$
and the natural identifications
$$(TQ^{+})^{\circ} \simeq (TQ^{-})^{*} \simeq \pr_1^{*}T^{*}Q \text{ and } (TQ^{-})^{\circ} \simeq (TQ^{+})^{*} \simeq \pr_2^{*}T^{*}Q.$$

For any smooth map $H : Q \times Q \lra X$, where $X$ is a smooth
manifold, we define $D_{1}H := TH \circ j_1$ and
$D_{2}H := TH \circ j_2$, where $TH:T(Q \times Q) \lra TX$ denotes the
tangent map of $H$, as usual. Thus,
$$TH(q_0,q_1)(\delta q_0,\delta q_1) = D_{1}H(q_0,q_1)(\delta q_0) + D_{2}H(q_0,q_1)(\delta q_1).$$

If $H : Q \times Q \lra \rr$, then $D_{1}H(q_0,q_1) \in T^{*}_{q_0} Q$
and $D_{2}H(q_0,q_1) \in T^{*}_{q_1}Q$.

Naturally, these ideas may be extended to a product of more than two
(possibly different) manifolds.

\begin{definition}
A {\it forced discrete mechanical system} (FDMS) consists of a triple $(Q,L_d,f_d)$ where $Q$ is an $n$-dimensional differential manifold, the {\it configuration space}, $L_d : Q \times Q \lra \rr$ is a smooth map, the {\it discrete Lagrangian} and $f_d$ is a $1$-form on $Q \times Q$, the {\it discrete force}. 
\end{definition}

Using the identifications stated above, we will usually decompose forces
$f_d \in \Gamma(Q \times Q, T^{*}(Q\times Q))$ as
$f_d^{-} \oplus f_d^{+}$ with $f_d^{-} \in TQ^-$ and
$f_d^{+} \in TQ^+$, where $\Gamma(M,E)$ denotes the space of smooth
sections of a fiber bundle $E \lra M$.

Thus, $f_d^- (q_0,q_1) \in T_{q_0}^*Q$,
$f_d^+ (q_0,q_1) \in T_{q_1}^*Q$ and
$$f_d(q_0,q_1)(\delta q_0,\delta q_1) = f_d^- (q_0,q_1)(\delta q_0) + f_d^+ (q_0,q_1)(\delta q_1).$$

\begin{definition}
A {\it discrete curve} in $Q$ is a map $q. : \{ 0,\ldots,N \} \lra Q$ and an {\it infinitesimal variation} over a discrete curve $q.$ consists of a map $\delta q. : \{ 0,\ldots,N \} \lra TQ$ such that $\delta q_k \in T_{q_k}Q, \ \forall\, k=0,...,N $. An infinitesimal variation is said to have {\it fixed endpoints} if $\delta q_0 = \delta q_N = 0$.
\end{definition}

\begin{definition}
The {\it discrete action functional} of the FDMS $(Q,L_d,f_d)$ is defined as 
$${\displaystyle \frakS_d(q.) := \sum_{k=0}^{N-1} L_d(q_k,q_{k+1}).}$$
\end{definition}

The dynamics of a FDMS is given by the appropriately
modified discrete Lagrange d’Alembert principle known as the discrete Lagrange--d'Alembert principle. 

\begin{definition}
A discrete curve $q.$ is a {\it trajectory} of the FDMS $(Q,L_d,f_d)$ if it satisfies
$$\delta \left( \sum_{k=0}^{N-1} L_d(q_{k},q_{k+1})  \right) + \sum_{k=0}^{N-1} f_d(q_k,q_{k+1})(\delta q_k,\delta q_{k+1}) = 0,$$
for all infinitesimal variations $\delta q.$ of $q.$ with fixed endpoints. 
\end{definition}

The following well known result (see \cite{M-West}), which follows from the standard calculus of
variations, characterizes the trajectories of the system in terms of solutions of a set of algebraic
equations.

\begin{theorem}\label{thm:equations}
Let $(Q,L_d,f_d)$ a FDMS. Then, a discrete curve $q. : \{ 0,\ldots,N \} \lra Q$ is a trajectoy of $(Q,L_d,f_d)$ if and only if it satisfies the following algebraic identities
\begin{equation}\label{forcedELe}
D_2 L_d(q_{k-1},q_k) + D_1 L_d(q_k,q_{k+1}) + f_d^+(q_{k-1},q_k) + f_d^-(q_k,q_{k+1}) = 0 \in T_{q_k}^{*}Q
\end{equation}
for all $k=1,...,N-1$, called the {\it forced discrete Euler-Lagrange equations}.
\end{theorem}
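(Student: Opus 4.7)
The plan is to apply the discrete Lagrange--d'Alembert principle directly, expand both the variation of the action and the force pairing using the decompositions introduced earlier in the excerpt, and then reindex and use the arbitrariness of the interior variations to conclude.

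First, I would expand the variation of each summand using the splitting $TH = D_1H \oplus D_2H$. For a discrete curve $q_\cdot$ and an infinitesimal variation $\delta q_\cdot$ with fixed endpoints $\delta q_0 = \delta q_N = 0$, one has
\begin{equation*}
\delta L_d(q_k,q_{k+1}) = D_1 L_d(q_k,q_{k+1})(\delta q_k) + D_2 L_d(q_k,q_{k+1})(\delta q_{k+1}),
\end{equation*}
and, using the decomposition $f_d = f_d^- \oplus f_d^+$,
\begin{equation*}
f_d(q_k,q_{k+1})(\delta q_k,\delta q_{k+1}) = f_d^-(q_k,q_{k+1})(\delta q_k) + f_d^+(q_k,q_{k+1})(\delta q_{k+1}).
\end{equation*}

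Next, I would collect terms by each $\delta q_k$. The $D_2 L_d$ and $f_d^+$ contributions come from the summand with index $k-1$, while the $D_1 L_d$ and $f_d^-$ contributions come from the summand with index $k$. After a standard reindexing (a discrete analogue of integration by parts), the full expression becomes
\begin{equation*}
\sum_{k=1}^{N-1} \bigl[ D_2 L_d(q_{k-1},q_k) + D_1 L_d(q_k,q_{k+1}) + f_d^+(q_{k-1},q_k) + f_d^-(q_k,q_{k+1}) \bigr](\delta q_k) + \mathrm{B},
\end{equation*}
where the boundary term $\mathrm{B}$ involves only $\delta q_0$ and $\delta q_N$ and therefore vanishes under the fixed-endpoint condition.

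Finally, since the interior variations $\delta q_1,\ldots,\delta q_{N-1}$ can be chosen independently and arbitrarily in each $T_{q_k}Q$, a standard argument (applying the expression to variations supported at a single index $k$ with arbitrary tangent vector) shows that vanishing of the sum for all admissible variations is equivalent to the pointwise vanishing of each bracket, giving precisely~\eqref{forcedELe}. Conversely, if the equations~\eqref{forcedELe} hold for all $k=1,\ldots,N-1$, then reversing the reindexing shows that the discrete Lagrange--d'Alembert condition is satisfied for every fixed-endpoint variation, so $q_\cdot$ is a trajectory. The only subtlety is purely notational, namely keeping track of which discrete-force component is pulled back to which tangent space during the reindexing; once the identifications $(TQ^+)^\circ \simeq \pr_1^* T^*Q$ and $(TQ^-)^\circ \simeq \pr_2^* T^*Q$ are used consistently, the result follows.
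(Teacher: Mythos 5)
Your proposal is correct: the expansion of $\delta L_d$ and $f_d$ into their $D_1/D_2$ and $f_d^-/f_d^+$ components, the reindexing that collects coefficients of each $\delta q_k$, the vanishing of the boundary terms via $\delta q_0=\delta q_N=0$, and the fundamental-lemma argument using the independence of the interior variations together give exactly the equations~\eqref{forcedELe}, and the converse follows by reversing the computation. The paper does not spell out a proof but simply cites the standard calculus of variations (following \cite{M-West}), and your argument is precisely that standard argument, so the two approaches coincide.
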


As the case without forces, we can consider two fiber derivatives from a FDMS $(Q,L_d,f_d)$.

\begin{definition}
Let $(Q,L_d,f_d)$ be a FDMS. We define the maps $\ff^+_{f_d}L_{d}: Q \times Q \lra T^*Q$ and $\ff^-_{f_d}L_{d} : Q \times Q \lra T^*Q$ by
\[
\begin{split}
\ff^+_{f_d}L_{d}(q_0,q_1) & := (q_1, D_2 L_d(q_0,q_1) + f_d^+(q_0,q_1) ) \\
\ff^-_{f_d}L_{d}(q_0,q_1) & := (q_0, -D_1 L_d(q_0,q_1) - f_d^-(q_0,q_1) ),
\end{split}
\]
called {\it forced discrete Legendre transforms}.
\end{definition}

Note that the forced discrete Euler-Lagrange equations (\ref{forcedELe}) can be written as
$$\ff^+_{f_d}L_{d}(q_{k-1},q_{k}) =\ff^-_{f_d}L_{d}(q_{k},q_{k+1}) \ \ \ \forall\, k=1,...,N-1.$$

\begin{remark}\label {preservation}
Both $\ff^+_{f_d}L_{d}$ and $\ff^-_{f_d}L_{d}$ preserve the base points of the fiber bundles $\pr_{i} : Q \times Q \lra Q$ and $T^*Q \lra Q$, with $i=1,2$, respectively, in the sense that 
$$\ff^-_{f_d}L_{d}(q_0,q_1) \in T^{*}_{q_0}Q \ \ \mbox{and} \ \ \ff^+_{f_d}L_{d}(q_0,q_1) \in T^{*}_{q_1}Q.$$
That is, $\ff^-_{f_d}L_{d} \in \Gamma(Q \times Q, \pr_{1}^{*} (T^{*}Q))$ and 
$\ff^+_{f_d}L_{d} \in \Gamma(Q \times Q, \pr_{2}^{*} (T^{*}Q)).$
\end{remark}

\begin{remark}\label{remark:absorbing the force}
Suppose $f_d = d\gamma_d$, for some smooth function $\gamma_ d: Q \times Q \lra \rr$. We could then define a modified Lagrangian $L_{d}^{\gamma_d} : Q \times Q \lra \rr$ as $L_{d}^{\gamma_d} := L_d + \gamma_d$. By writing the equations of motion, it is immediately observed that the FDMS $(Q,L_d,f_d)$ is equivalent to the discrete mechanical system $(Q,L_{d}^{\gamma_d})$.
\end{remark}

\subsection{Regularity and existence of discrete flow}

As in the case without forces, the evolution of $(Q,L_d,f_d)$ will be described by a function whose domain and image will be included in $Q \times Q$ called \textit{forced discrete flow}, which may fail to be well-defined for arbitrary choices of FDMS. 

\begin{definition}
A FDMS $(Q,L_d,f_d)$ is said to be {\it regular} if both $\ff^+_{f_d}L_{d}$ and $\ff^-_{f_d}L_{d}$ are local isomorphisms of fiber bundles or, equivalently, if they are local diffeomorphisms.

In some special cases, such as when $Q$ is a vector space, both forced discrete Legendre transforms may be global diffeomorphisms. In this case, we say that the FDMS $(Q,L_d,f_d)$  is {\it hyperregular}.

Since $\ff^+_{f_d}L_{d}$ and $\ff^-_{f_d}L_{d}$ are fiber preserving, being diffeomorphisms is equivalent to the following maps being diffeomorphisms:
\[
\begin{split}
\phi_q^+ : Q \lra T^*_qQ \quad &\text{given by} \quad \phi_q^+(q') := \ff^+_{f_d}L_{d}(q',q) = D_2 L_d(q',q) + f_d^+(q',q) \\
\phi_q^- : Q \lra T^{*}_{q}Q \quad &\text{given by} \quad \phi_q^-(q') := \ff^-_{f_d}L_{d}(q,q') = -D_1 L_d(q,q') - f_d^-(q,q'),
\end{split}
\]
where $q,q' \in Q$.
\end{definition}

\begin{remark}\label{regularity1}
Note that given $\delta q_0 \in T_{q_0}Q$,
\begin{equation*}
\begin{split}
T_{q_0} \phi^+_{q_1}  (\delta q_0) &= T\ff^+_{f_d}L_{d}
(q_0,q_1)(\delta q_{0},0) = D_{1}\ff^+_{f_d}L_{d}(q_0,q_1)(\delta q_0) \\
&= D_1D_2 L_d(q_0,q_1) + D_1 f_d^+(q_0,q_1).
\end{split}
\end{equation*}

If we identify $T_{\phi^+_{q_1}(q_0)}T_{q_1}^*Q \simeq T_{q_1}^*Q$ (the fiber was fixed all along), we can regard the tangent map as a function $T_{q_0}\phi^+_{q_1} : T_{q_0}Q \lra T_{q_1}^*Q$. With this in mind, we can define the bilinear map
\[
\begin{split}
	D_1D_2 L_d(q_0,q_1) + &D_1 f_d^+(q_0,q_1) : T_{q_0}Q \times T_{q_1}Q \lra \rr \\
	(\delta q_0,\delta q_1) &\mapsto (T_{q_0}\phi_{q_1}^+ (\delta q_0))(\delta q_1) \in \rr.
\end{split}
\]

Furthermore, if $\phi^+_{q_1}$ is a local diffeomorphism, then $T_{q_0}\phi^+_{q_1}$ is an isomorphism and this bilinear map is non degenerate. Indeed, $(T_{q_0}\phi^+_{q_1}(\delta q_0))(\delta q_1) = 0,  \forall \ \delta q_1 \in T_{q_1}Q$, implies that $\delta q_0 \in \ker T_{q_0}\phi^+_{q_1} = \{ 0 \}$.

Analogously, if $\phi^-_{q_0}$ is a local diffeomorphism, then the bilinear map
$$D_2\ff^-_{f_d}L_{d}(q_0,q_1)=-D_2D_1 L_d(q_0,q_1) - D_2f_d^-(q_0,q_1): T_{q_1}Q \lra T_{q_0}^{*}Q$$
is non degenerate.

Therefore, the regularity of the system $(Q,L_d,f_d)$ is equivalent to the bilinear maps
$$D_1D_2 L_d(q_0,q_1) + D_1f_d^+(q_0,q_1): T_{q_0}Q \times T_{q_1}Q \lra \rr$$
$$-D_2D_1 L_d(q_0,q_1) - D_2f_d^-(q_0,q_1): T_{q_1}Q \times T_{q_0}Q     \lra \rr$$
being non degenerate. This can be expressed in terms of their associated matrices, which should be, locally, non-singular. Consider coordinates $(q_0^1,\ldots,q_0^n,q_1^1,\ldots,q_1^n)$ on $Q \times Q$. If $f_d^+ = f_i^+ \ dq_1^i$, then these matrices are
$$\frac{\partial f_d^+}{\partial q_0} =
\left(
\begin{array}{ccc}
\frac{\partial f_1^+}{\partial q_0^1} & \cdots & \frac{\partial f_1^+}{\partial q_0^n} \\
\vdots & \ddots & \vdots \\
\frac{\partial f_n^+}{\partial q_0^1} & \cdots & \frac{\partial f_n^+}{\partial q_0^n}
\end{array}
\right),
\quad
\frac{\partial^2 L_d}{\partial q_0 \partial q_1} =
\left(
\begin{array}{ccc}
\frac{\partial^2 L_d}{\partial q_0^1 \partial q_1^1} & \cdots & \frac{\partial^2 L_d}{\partial q_0^n \partial q_1^1} \\
\vdots & \ddots & \vdots \\
\frac{\partial^2 L_d}{\partial q_0^1 \partial q_1^n} & \cdots & \frac{\partial^2 L_d}{\partial q_0^n \partial q_1^n}
\end{array}
\right)$$
$$\frac{\partial^2 L_d}{\partial q_0 \partial q_1} + \frac{\partial f_d^+}{\partial q_0} =
\left(
\begin{array}{ccc}
\frac{\partial^2 L_d}{\partial q_0^1 \partial q_1^1} + \frac{\partial f_1^+}{\partial q_0^1} & \cdots & \frac{\partial^2 L_d}{\partial q_0^n \partial q_1^1} + \frac{\partial f_1^+}{\partial q_0^n} \\
\vdots & \ddots & \vdots \\
\frac{\partial^2 L_d}{\partial q_0^1 \partial q_1^n} + \frac{\partial f_n^+}{\partial q_0^1} & \cdots & \frac{\partial^2 L_d}{\partial q_0^n \partial q_1^n} + \frac{\partial f_n^+}{\partial q_0^n}
\end{array}
\right),$$
where we have omitted the point $(q_0,q_1)$ everywhere. Finally, while this is the $(+)$-case, similar matrices are the ones associated to the other bilinear map.
\end{remark} 

Now, we study the existence of local flows for regular FDMS. It is clear that to analyze the good definition of the flow it suffices to consider paths of length 2.

\begin{theorem}\label{thm:flow}
Let $(Q,L_d,f_d)$ be a regular FDMS. Given a trajectory $(q_0,q_1,q_2)$ of $(Q,L_d,f_d)$, there are open subsets $U, V \subset Q \times Q$ and a diffeomorphism ${\bf F}_{L_d,f_d} : U \lra V$ such that

\begin{enumerate}
\item $(q_0,q_1) \in U, \ (q_1,q_2) \in V$ and ${\bf F}_{L_d,f_d}(q_0, q_1) = (q_1,q_2)$.
\item For any $(\widetilde{q_0},\widetilde{q_1}) \in U$ if $\widetilde{q_2} := \pr_2({\bf F}_{L_d,f_d}(q_0, q_1))$, then $(\widetilde{q_0}, \widetilde{q_1},  \widetilde{q_2})$ is a trajectory of $(Q,L_d,f_d)$.
\item Any trajectory $(q'_0, q'_1, q'_2 )$ of $(Q,L_d,f_d)$ such that $(q'_0, q'_1) \in U$ and $(q'_1, q'_2) \in V$ satisfies that
$(q'_1, q'_2) = {\bf F}_{L_d,f_d}(q'_0, q'_1)$.
\end{enumerate}
\end{theorem}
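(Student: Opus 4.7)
The plan is to realize the flow as a composition of the two forced discrete Legendre transforms, one of which is inverted locally. The starting point is the reformulation of the forced discrete Euler--Lagrange equations stated right after Theorem~\ref{thm:equations}: a triple $(q_0,q_1,q_2)$ is a trajectory of $(Q,L_d,f_d)$ if and only if $\ff^+_{f_d}L_d(q_0,q_1) = \ff^-_{f_d}L_d(q_1,q_2)$. By Remark~\ref{preservation}, both sides of this identity lie in $T^*_{q_1}Q$, so the equation automatically encodes the compatibility of the base points.

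Using the regularity hypothesis I would choose open neighborhoods $V_0 \subset Q\times Q$ of $(q_1,q_2)$ and $W \subset T^*Q$ of $\ff^-_{f_d}L_d(q_1,q_2)$ such that $\ff^-_{f_d}L_d|_{V_0}\colon V_0 \lra W$ is a diffeomorphism. By regularity again, $\ff^+_{f_d}L_d$ is a local diffeomorphism at $(q_0,q_1)$; using continuity and the base trajectory identity $\ff^+_{f_d}L_d(q_0,q_1) = \ff^-_{f_d}L_d(q_1,q_2) \in W$, I would pick an open $U \ni (q_0,q_1)$ on which $\ff^+_{f_d}L_d$ is a diffeomorphism onto its image and satisfies $\ff^+_{f_d}L_d(U) \subset W$. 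Then I would set
$${\bf F}_{L_d,f_d} := \bigl(\ff^-_{f_d}L_d|_{V_0}\bigr)^{-1} \circ \ff^+_{f_d}L_d|_U,$$
and take $V := {\bf F}_{L_d,f_d}(U) \subset V_0$. As a composition of two diffeomorphisms onto their images, ${\bf F}_{L_d,f_d}\colon U \lra V$ is a diffeomorphism.

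Claims (1)--(3) then follow by unwinding this definition. Claim~(1) is immediate from the trajectory equation for $(q_0,q_1,q_2)$. For~(2), given $(\widetilde{q}_0,\widetilde{q}_1)\in U$, the image of $\bigl(\ff^-_{f_d}L_d|_{V_0}\bigr)^{-1}$ on $T^*_{\widetilde{q}_1}Q \cap W$ has first coordinate $\widetilde{q}_1$ by Remark~\ref{preservation}, so ${\bf F}_{L_d,f_d}(\widetilde{q}_0,\widetilde{q}_1) = (\widetilde{q}_1,\widetilde{q}_2)$ for some $\widetilde{q}_2$, and the Legendre identity---hence the forced discrete Euler--Lagrange equation---holds by construction. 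For~(3), any trajectory $(q'_0,q'_1,q'_2)$ with $(q'_0,q'_1)\in U$ and $(q'_1,q'_2)\in V$ satisfies $\ff^+_{f_d}L_d(q'_0,q'_1) = \ff^-_{f_d}L_d(q'_1,q'_2)\in W$, and applying $(\ff^-_{f_d}L_d|_{V_0})^{-1}$ gives $(q'_1,q'_2) = {\bf F}_{L_d,f_d}(q'_0,q'_1)$. The whole argument is essentially a single application of the inverse function theorem packaged through the two Legendre transforms; the only technical care required is the shrinking of $U$ to ensure both $\ff^+_{f_d}L_d(U)\subset W$ and the local diffeomorphism property, which poses no serious obstacle.
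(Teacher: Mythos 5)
Your argument is correct and is essentially the paper's own proof: both define ${\bf F}_{L_d,f_d}$ as the composition of $\ff^{+}_{f_d}L_d$ with a local inverse of $\ff^{-}_{f_d}L_d$ on suitably shrunken neighborhoods, and verify the three claims by unwinding the Legendre-transform reformulation of the forced discrete Euler--Lagrange equations. The only (harmless) difference is bookkeeping: you take $V := {\bf F}_{L_d,f_d}(U)$ directly, whereas the paper builds $U$, $V$, $W$ by intersecting two pairs of neighborhoods so that both restricted Legendre transforms map diffeomorphically onto the same $W$.
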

\begin{proof}
Given a trajectory $(q_0,q_1,q_2)$ of $(Q,L_d,f_d)$, since $\ff^-_{f_d}L_{d}$ is a local diffeomorphism, there are open subsets $V' \subset Q \times Q$ and $W' \subs T^*Q$ such that $(q_1,q_2) \in V'$ and $\ff^-_{f_d}L_{d} \mid_{V'}^{W'}$ is a diffeomorphism. Since $(q_0,q_1,q_2)$ is a trajectory of $(Q,L_d,f_d)$, it satisfies equations \eqref{forcedELe}: $\ff^-_{f_d}L_{d} (q_1,q_2) = \ff^+_{f_d}L_{d} (q_0,q_1)$
and we see that $\ff^+_{f_d}L_{d} (q_0, q_1) \in W'$. The continuity of $\ff^+_{f_d}L_{d}$ implies that $U' := (\ff^+_{f_d}L_{d} )^{-1}(W') \subset Q \times Q$ is an open subset. Thus, for any $(\widetilde{q_0},\widetilde{q_1}) \in U'$, $\ff^+_{f_d}L_{d} (\widetilde{q_0},\widetilde{q_1}) \in W'$ and
\[
(\widetilde{q_1},\widetilde{q_2}) :=(\ff^-_{f_d}L_{d}  
|_{V'}^{W'})^{-1}((\ff^+_{f_d}L_{d} ))(\widetilde{q_0},\widetilde{q_1}) \in V'
\]
is well defined and satisfies
$\ff^-_{f_d}L_{d} (\widetilde{q_1},\widetilde{q_2}) = \ff^+_{f_d}L_{d} (\widetilde{q_0},\widetilde{q_1})$. Therefore, $(\widetilde{q_0},\widetilde{q_1},\widetilde{q_2})$ is a trajectory of $(Q,L_d,f_d)$.

The same argument as above, but using that $\ff^+_{f_d}L_{d}$ is a local diffeomorphism shows
that there are open subsets $V'', U'' \subset Q \times Q$ and $W'' \subs T^*Q$ such that $(q_0, q_1) \in U''$, $(q_1, q_2) \in V''$, $\ff^+_{f_d}L_{d} |_{U''}^{W''}$ is a diffeomorphism and $(\ff^-_{f_d}L_{d}  )^{-1}(W'') = V''$.
If we now define $V := V ' \cap V'', U := U' \cap U''$ and $W := W' \cap W''$, we have open subsets of $Q \times Q$ and $T^*Q$ such that $(q_0, q_1) \in U$ and $(q_1, q_2) \in W$. Furthermore, it is easy to check that
both $\ff^-_{f_d}L_{d} \mid_{V}^{W}$ and $\ff^+_{f_d}L_{d} \mid_{U}^{W}$ are diffeomorphisms. Then, if we define
${\bf F}_{L_d,f_d}: U \lra V$ by
\[
{\bf F}_{L_d,f_d} := (\ff^-_{f_d}L_{d} \mid_{V}^{W} )^{-1} \circ (\ff^+_{f_d}L_{d} \mid_{U}^{W} )
\]
we have that $(\widetilde{q_0},\widetilde{q_1}, \pr_2({\bf F}_{L_d,f_d}(\widetilde{q_0},\widetilde{q_1})))$ is a trajectory of $(Q,L_d,f_d) $ for any $(\widetilde{q_0},\widetilde{q_1}) \in U$.

Also, if $(q'_{0}, q'_1, q'_2 )$ is a trajectory of $(Q,L_d,f_d) $ such that $(q'_0, q'_1 ) \in  U$ and $(q'_1, q'_2 ) \in V$, by \eqref{forcedELe}
and the invertibility of $\ff^-_{f_d}L_{d} \mid_{V}^ {W} $ , we conclude that $(q'_1, q'_2 ) = {\bf F}_{L_d,f_d}(q'_0, q'_1 )$. 
\end{proof}

The map ${\bf F}_{L_d,f_d}$ defined in Theorem \ref{thm:flow} will be called the {\it local flow} of the regular FDMS $(Q,L_d,f_d)$.

In Theorem 5.1 of \cite{ar:borda_fernandez_grillo:2013:discrete_second_order_constrained_lagrangian_systems_first_results} the authors prove some of the results of Theorem \ref{thm:flow} from a slightly different perspective and they also show that the set of all (length 2) trajectories of a regular FDMS, if not empty, is an embedded submanifold of $Q \times Q \times Q$.

\begin{remark}
Before continuing with the analysis of forced systems, let us consider the case $f_d \equiv 0$. It is well known that variational calculus gives rise to the so-called {\it discrete Euler-Lagrange equations} that describe the dynamics of the system. They can be written as
\begin{equation}
D_2 L_d(q_{k-1},q_k) + D_1 L_d(q_k,q_{k+1})  = 0 \ \ \ \forall\, k=1,...,N-1.
\end{equation}

In this case, a discrete mechanical system $(Q,L_d)$ is regular if the discrete Lagrangian $L_d$ is regular (as usual, see \cite{M-West}); that is, 
for all $q_0, q_1 \in Q$ the discrete Legendre transforms 
$\ff^+ L_{d}: Q \times Q \lra T^*Q$ and $\ff^- L_{d} : Q \times Q \lra T^*Q$ given by
$\ff^+L_{d}(q_0,q_1) :=  D_2 L_d(q_0,q_1)  \in T_{q_1}^*Q$ and $\ff^-L_{d}(q_0,q_1) :=  -D_1 L_d(q_0,q_1)      
\in T_{q_0}^*Q $
are local diffeomorphisms.
\end{remark}

\subsection{Structures on $Q\times Q$}

Returning to the variational approach already mentioned in Theorem \ref{thm:equations}, as in the case without forces (see Theorem 1.3.1 of \cite{M-West}), one can define two 1-forms on $Q\times Q$ by 
\[
\begin{split}
	\theta_{f_d}^{+} (q_0,q_1) & := D_2 L_d(q_0,q_1) + f_d^+(q_0,q_1) \in T_{q_1}^{*}Q \\
	\theta_{f_d}^{-}(q_0,q_1) & :=  -D_1 L_d(q_0,q_1) - f_d^-(q_0,q_1) \in T_{q_0}^{*}Q.
\end{split}
\]

If $(q^1,\ldots,q^n)$ is a local coordinate system on $Q$ and
\[
f_d^+(q_0,q_1) = (f_d^{+})_{i}(q_0,q_1)\, dq_1^i, \quad f_d^-(q_0,q_1) = (f_d^{-})_{i}(q_0,q_1)\, dq_0^i,
\]
then we can write, omitting the application point,
$$\theta_{f_d}^+ = \left( \frac{\partial L_d}{\partial q_1^i} + (f_d^{+})^{i} \right) \ dq_1^i \ \ \ \mbox{and} \ \ \ \theta_{f_d}^- = \left(- \frac{\partial L_d}{\partial q_0^i} - (f_d^{-})^{i} \right) \ dq_0^i.$$

Taking exterior differential, we can define two 2-forms on $Q  \times Q$ as 
$$\omega_{f_d}^{+} := -d\theta_{f_d}^{+} \ \ \ \mbox{and} \ \ \ \omega_{f_d}^{-} := -d\theta_{f_d}^{-}.$$

Thus, the expression of $\omega_{f_d}^+$ in local coordinates is
\begin{equation*}
\begin{split}
\omega_{f_d}^+ &= -d\theta_{f_d}^+ \\
&= - \left( \frac{\partial^2 L_d}{\partial q_0^j \partial q_1^i} + \frac{\partial (f_d^{+})^{i}}{\partial q_0^j} \right) \ dq_0^j \wedge dq_1^i - \left( \frac{\partial^2 L_d}{\partial q_1^j \partial q_1^i} + \frac{\partial (f_d^{+})^{i}}{\partial q_1^j} \right) \ dq_1^j \wedge dq_1^i \\
&= - \left( \frac{\partial^2 L_d}{\partial q_0^j \partial q_1^i} + \frac{\partial (f_d^{+})^{i}}{\partial q_0^j} \right) \ dq_0^j \wedge dq_1^i - \frac{\partial (f_d^{+})^{i}}{\partial q_1^j} \ dq_1^j \wedge dq_1^i.
\end{split}
\end{equation*}

Notice that $\theta_{f_d}^{+} - \theta_{f_d}^{-}=dL_d+ f_d$, implying that
\begin{equation}\label{omegaf+ - omegaf-}
\omega_{f_d}^{+} -\omega_{f_d}^{-} = -df_d.
\end{equation}

Therefore, if $f_d$ is closed, we can define a unique 2-form on $Q\times Q$ by $\omega_{f_d} := \omega_{f_d}^{+}=\omega_{f_d}^{-}$.

\begin{remark}
When $f_d \equiv 0$, as discussed in Section 1.3.2 of \cite{M-West}, it is possible to define two 1-forms on $Q \times Q$ as $\theta^{+}(q_0,q_1) := D_2 L_d(q_0,q_1)$ and $\theta^{-}(q_0,q_1) :=- D_1 L_d(q_0,q_1)$. If the system is regular, they give rise to the symplectic structure on $Q \times Q$ given by
\[
\omega_{L_d}(q_0,q_1) := -d\theta^+(q_0,q_1) = -d\theta^-(q_0,q_1),
\]
whose expression in local coordinates is
\begin{equation}\label{eq:omegaLd}
\omega _{L_{d}}\left( q_{0},q_{1}\right) = -\frac{\partial ^{2}L_{d}}{%
\partial q_{0}^{i}\partial q_{1}^{j}}dq_{0}^{i}\wedge dq_{1}^{j}\text{.}
\end{equation}

Note that this definition of $\omega_{L_d}$ differs by a sign from the one found in \cite{M-West}, making the discrete Legendre transforms into symplectic maps.
\end{remark}

Hence, it is natural to wonder if  $\omega_{f_d}^{-}$ and $\omega_{f_d}^{+}$ are symplectic structures when the FDMS $(Q,L_d,f_d)$ is regular. Let us first recall that
\begin{equation}\label{2-eq:omega_f+ en coordenadas}
\omega_{f_d}^+ = -d\theta_{f_d}^+ = -\left( \frac{\partial^2 L_d}{\partial q_0^j \partial q_1^i} + \frac{\partial (f_d^{+})_{i}}{\partial q_0^j} \right) \ dq_0^j \wedge dq_1^i - \frac{\partial (f_d^{+})_{i}}{\partial q_1^j} \ dq_1^j \wedge dq_1^i,
\end{equation}
so that $\omega_{f_d}^+ = \omega_{L_d} - df_d^+$.

Similarly, $\omega_{f_d}^- = -d\theta_{f_d}^- = \omega_{L_d} + df_d^-$.

\begin{lemma}\label{SYMP}
Let $(Q,L_d,f_d)$ be a regular FDMS. Then the $2$-forms $\omega_{f_d}^+$ and $\omega_{f_d}^-$ are non degenerate.
\end{lemma}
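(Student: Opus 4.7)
The plan is to recognize $\omega_{f_d}^\pm$ as the pullback, along the forced discrete Legendre transforms, of the canonical symplectic form on $T^*Q$; non-degeneracy then follows directly from regularity. Write $\theta_Q$ for the Liouville $1$-form on $T^*Q$ and $\omega_Q := -d\theta_Q$ for its canonical symplectic form.

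First I would establish the identity $\theta_{f_d}^+ = (\ff^+_{f_d}L_d)^*\theta_Q$. Since $\ff^+_{f_d}L_d(q_0,q_1) = (q_1,\, D_2 L_d(q_0,q_1) + f_d^+(q_0,q_1))$, its composition with the cotangent projection $\pi_Q:T^*Q\lra Q$ equals $\pr_2$. Applying the defining property $\theta_Q(\alpha)(v) = \alpha(T\pi_Q(v))$ at $\alpha = \ff^+_{f_d}L_d(q_0,q_1)$, for $(\delta q_0,\delta q_1)\in T_{q_0}Q \oplus T_{q_1}Q$ I find
\[
\bigl((\ff^+_{f_d}L_d)^*\theta_Q\bigr)(q_0,q_1)(\delta q_0,\delta q_1) = \bigl(D_2 L_d + f_d^+\bigr)(q_0,q_1)(\delta q_1),
\]
which agrees with $\theta_{f_d}^+(q_0,q_1)(\delta q_0,\delta q_1)$ under the identification $\pr_2^*T^*Q \simeq (TQ^-)^\circ$. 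An analogous computation using $\pi_Q\circ\ff^-_{f_d}L_d = \pr_1$ yields $\theta_{f_d}^- = (\ff^-_{f_d}L_d)^*\theta_Q$.

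Taking exterior derivatives and using that pullback commutes with $d$,
\[
\omega_{f_d}^\pm = -d\theta_{f_d}^\pm = (\ff^\pm_{f_d}L_d)^*\omega_Q.
\]
By the regularity hypothesis, $\ff^+_{f_d}L_d$ and $\ff^-_{f_d}L_d$ are local diffeomorphisms between the $2n$-dimensional manifolds $Q\times Q$ and $T^*Q$. Since the pullback of a non-degenerate $2$-form by a local diffeomorphism between equidimensional manifolds remains non-degenerate, and $\omega_Q$ is non-degenerate, we conclude that $\omega_{f_d}^+$ and $\omega_{f_d}^-$ are non-degenerate.

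The only slightly delicate point is the pullback identity in the second paragraph, which hinges on correctly interpreting $\theta_{f_d}^+(q_0,q_1)\in T_{q_1}^*Q$ as a covector at $(q_0,q_1)\in Q\times Q$ annihilating $TQ^-$. Should the conceptual route prove awkward, a purely coordinate-based alternative is available: from the expression \eqref{2-eq:omega_f+ en coordenadas}, the matrix of $\omega_{f_d}^+$ has the block form $\begin{pmatrix} 0 & -A \\ A^T & C \end{pmatrix}$ with $A$ the matrix of $D_1D_2 L_d + D_1 f_d^+$ and $C$ antisymmetric; a Schur-type expansion gives determinant $(\det A)^2$, which is nonzero by Remark \ref{regularity1}, and likewise for the $(-)$ case.
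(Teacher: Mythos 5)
Your proof is correct, but it proceeds by a genuinely different route than the paper. The paper's proof is a direct coordinate computation: it writes the matrix of $\omega_{f_d}^+$ in the block form $\left(\begin{smallmatrix} 0 & B \\ -B^t & D\end{smallmatrix}\right)$ and invokes a Schur-complement criterion (Theorem 2.2 of \cite{Lu-Shiou}) to conclude invertibility from the invertibility of $B$, which Remark \ref{regularity1} ties to regularity; this is essentially the ``fallback'' argument you sketch in your last paragraph (and your determinant claim $(\det B)^2$, up to sign, is a correct shortcut past the cited matrix theorem). Your main route instead establishes $\omega_{f_d}^\pm = (\ff^\pm_{f_d}L_d)^*\omega_Q$ first and then uses that a local diffeomorphism between equidimensional manifolds pulls back non-degenerate $2$-forms to non-degenerate $2$-forms. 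This is sound: the pullback identity is exactly the content of Proposition \ref{prop:omegaf is pullback of omegaQ}, which the paper proves \emph{after} the lemma but by a computation that nowhere uses non-degeneracy or regularity, so there is no circularity in reordering as you do. Your approach buys a coordinate-free, one-line deduction and makes transparent \emph{why} regularity is the right hypothesis; the paper's approach is more self-contained at that point in the exposition and yields the explicit block matrix $[\omega_{f_d}^+]$, which is reused verbatim in the proof of Proposition \ref{regularity}. Both are valid; yours is arguably the more conceptual organization of the same material.
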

\begin{proof}
If $(Q,L_d,f_d)$ is regular, as we observed in Remark \ref{regularity1}, the matrix associated to $D_1 D_2 L_d + D_1 f_d^+$ is invertible.

Remembering equation \eqref{2-eq:omega_f+ en coordenadas}, if $(\delta q_0,\delta q_1),(\dot{q}_0,\dot{q}_1) \in T_{(q_0,q_1)}(Q \times Q)$, then
\[
\begin{split}
\omega_{f_d}^+(q_0,q_1)&((\delta q_0,\delta q_1),(\dot{q}_0,\dot{q}_1)) = \\
&- \left( \frac{\partial^2 L_d}{\partial q_0^j \partial q_1^i} + \frac{\partial (f_d^+)_i}{\partial q_0^j} \right) (\delta q_0^j \dot{q}_1^i - \dot{q}_0^j \delta q_1^i) - \frac{\partial (f_d^+)_i}{\partial q_1^j} (\delta q_1^j \dot{q}_1^i - \dot{q}_1^j \delta q_1^i).
\end{split}
\]

Let us compute the matrix of $\omega_{f_d}^+$ taking bases of the tangent spaces associated to the local coordinates $(q^1,\ldots,q^n)$. Then, for $k,l = 1,\ldots,n$,
\begin{eqnarray*}
\omega_{f_d}^+(q_0,q_1) \left( \left( \frac{\partial}{\partial q^k} , 0 \right), \left( \frac{\partial}{\partial q^l} , 0 \right) \right) & = & 0 \\
\omega_{f_d}^+(q_0,q_1) \left( \left( \frac{\partial}{\partial q^k} , 0 \right), \left( 0 , \frac{\partial}{\partial q^l} \right) \right) & = & - \frac{\partial^2 L_d}{\partial q_0^k \partial q_1^l} - \frac{\partial (f_d^+)_l}{\partial q_0^k} \\
\omega_{f_d}^+(q_0,q_1) \left( \left( 0 , \frac{\partial}{\partial q^k} \right), \left( \frac{\partial}{\partial q^l} , 0 \right) \right) & = & \frac{\partial^2 L_d}{\partial q_0^l \partial q_1^k} + \frac{\partial (f_d^+)_k}{\partial q_0^l} \\
\omega_{f_d}^+(q_0,q_1) \left( \left( 0 , \frac{\partial}{\partial q^k} \right), \left( 0 , \frac{\partial}{\partial q^l} \right) \right) & = & - \frac{\partial (f_d^+)_l}{\partial q_1^k} + \frac{\partial (f_d^+)_k}{\partial q_1^l}.
\end{eqnarray*}

Hence, the matrix of $\omega_{f_d}^+$ is given by
$$[\omega_{f_d}^+] =
\left(
\begin{array}{cc}
A & B \\
C & D
\end{array}
\right)
=
\left(
\begin{array}{cc}
0 & B \\
-B^t & D
\end{array}
\right),$$
where
$$B := - \left( \frac{\partial^2 L_d}{\partial q_0 \partial q_1} + \frac{\partial f_d^+}{\partial q_0} \right)^t, \quad D := - \left( \frac{\partial f_d^+}{\partial q_1} \right)^t + \frac{\partial f_d^+}{\partial q_1}.$$

Applying a result of matrix algebra (see Theorem 2.2 of \cite{Lu-Shiou}), if $B$ is non singular, then the invertibility of $[\omega_{f_d}^+]$ is equivalent to that of $C - D B^{-1} A$. In this case, $A = 0$ and $C = -B^t$. Thus, it is clear that $[\omega_{f_d}^+]$ is non singular, since $B^t$ is non singular by hypothesis (recall Remark \ref{regularity1}).

The same reasoning may be used to conclude that $\omega_{f_d}^-$ is also non degenerate.
\end{proof}

\begin{proposition}\label{prop:unique omegaf}
Given a regular FDMS $(Q,L_d,f_d)$, the $2$-forms
\[
\omega_{f_d}^{+} := - d\theta_{f_d}^+ = \omega _{L_{d}} - d{f_d}^+ \quad \text{and} \quad \omega_{f_d}^{-} := -d\theta_{f_d}^- = \omega _{L_{d}} + d{{f_d}^-}
\]
are symplectic structures on $Q \times Q$.

In addition, if the force $f_d$ is closed\footnote{Notice that $f_d$ closed does not imply that $f_d^+$ and $f_d^-$ are closed.}, then $\omega_{f_d} := \omega_{f_d}^+ = \omega_{f_d}^- $ is a symplectic structure.
\end{proposition}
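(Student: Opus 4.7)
The proof is essentially a synthesis of facts already established in the preceding material, so my plan is short and organized around the three conditions that make up a symplectic structure: being a 2-form (automatic), closedness, and non-degeneracy.

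First, I would observe that $\omega_{f_d}^+$ and $\omega_{f_d}^-$ are closed for free: by their very definition they are exact, since $\omega_{f_d}^{\pm} = -d\theta_{f_d}^{\pm}$, and $d\circ d = 0$ takes care of the rest. So the only nontrivial content in the first assertion is the non-degeneracy, and that is precisely the statement of Lemma \ref{SYMP}. Thus for the first part of the proposition the proof reduces to invoking Lemma \ref{SYMP} together with the remark on exactness.

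For the second assertion, I would appeal to equation \eqref{omegaf+ - omegaf-}, which reads $\omega_{f_d}^+ - \omega_{f_d}^- = -df_d$. If $f_d$ is closed, then $df_d = 0$ and hence $\omega_{f_d}^+ = \omega_{f_d}^-$; the common 2-form $\omega_{f_d}$ is then well defined, and it inherits closedness and non-degeneracy from either of its two equivalent expressions by the first part of the proposition.

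Since each piece is either a direct citation (Lemma \ref{SYMP}), a trivial consequence of $d^2 = 0$, or a one-line consequence of the identity \eqref{omegaf+ - omegaf-}, I do not foresee a genuine obstacle here; the main thing to be careful about is simply to keep the roles of $f_d$ (a 1-form on $Q\times Q$) and its two components $f_d^{\pm}$ clearly separated, especially in view of the footnote warning that $f_d$ being closed does not force $f_d^+$ or $f_d^-$ individually to be closed. In particular, the equality $\omega_{f_d}^+ = \omega_{f_d}^-$ in the closed case must be argued via \eqref{omegaf+ - omegaf-} and \emph{not} by trying to deduce it term by term from the expressions $\omega_{L_d} \mp df_d^{\pm}$.
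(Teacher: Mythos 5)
Your proposal is correct and follows essentially the same route as the paper: closedness is immediate from exactness, non-degeneracy is exactly Lemma \ref{SYMP}, and the equality $\omega_{f_d}^+=\omega_{f_d}^-$ in the closed case comes from the identity \eqref{omegaf+ - omegaf-}, just as the paper establishes in the discussion preceding the proposition. Your extra care in distinguishing the closedness of $f_d$ from that of $f_d^{\pm}$ is a welcome clarification but does not change the argument.
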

\begin{proof}
It is clear that $\omega_{f_d}^+$ and $\omega_{f_d}^-$ are closed. Since they are also non degenerate by the Lemma \ref{SYMP}, they are symplectic structures.

Finally, if $f_d$ is closed, then $\omega_{f_d} := \omega_{f_d}^+ = \omega_{f_d}^-$ is a symplectic structure (by the previous paragraph).
\end{proof}

Note that if $f_d \equiv 0$ and the discrete Lagrangian $L_d$ is regular, it is well known that the symplectic structure $\omega _{L_{d}}$ verifies $\displaystyle{\omega _{L_{d}}=\left( \mathbb{F}^{\pm }L_{d}\right) ^{\ast }(\omega_Q })$, where $\omega_Q$ is the canonical symplectic structure on $T^*Q$ (\cite{M-West}). Then, it is natural to study the relation between the forms 
$\omega_{f_d}^+$ and $\omega_{f_d}^-$ and the pullback by the forced discrete Legendre transforms of $\omega_Q$.

If $\theta_{Q}$ is the canonical 1-form on $T^*Q$ and  $\pi_{Q}:T^*Q\to Q$ is the canonical projection of the cotangent bundle, then
\[
\begin{split}
(\ff_{f_d}^{+}L_d)^*&(\theta_{Q})(q_{0},q_{1}) (\delta q_{0},\delta q_{1}) \\
&= \theta_Q(D_2 L_d(q_{0},q_{1}) +f^+_d(q_{0},q_{1}))(T_{(q_0,q_1)} \ff_{f_d}^{+}L_d (\delta q_{0},\delta q_{1})) \\
&= (D_2 L_d(q_{0},q_{1}) + f^+_d(q_{0},q_{1})) \left( T_{\ff_{f_d}^{+}L_d(q_0,q_1)}\pi_{Q}(T_{(q_0,q_1)} \ff_{f_d}^{+}L_d (\delta q_{0},\delta q_{1})) \right) \\
&\stackrel{\star}{=} (D_2 L_d(q_{0},q_{1}) +f^+_d(q_{0},q_{1})) T_{(q_0,q_1)} \pr_{2} (\delta q_{0},\delta q_{1}) \\
&= (D_2 L_d(q_{0},q_{1}) + f^+_d(q_{0},q_{1}))  (\delta q_{1}) \\
&= \theta_{f_d}^{+} (q_0,q_1)(\delta q_{1}).
\end{split}
\]
where, in $\star$, we have used that $\pi_{Q} \circ \ff_{f_d}^{+} L_d = \pr_{2}$.

Then, without writing the evaluation points, we have 
$(\ff_{f_d}^{+}L_d)^*(\theta_{Q} )= \theta_{f_d}^{+}$ and taking exterior differential we obtain that
\[
(\ff_{f_d}^{+}L_d)^*(-d\theta_{Q}) = -d\theta_{f_d}^{+}.
\]

Analogously, using that $ \pi_{Q} \circ \ff_{f_d}^{-}L_d = \pr_{1} $ one can verify that
\[
(\ff_{f_d}^{-}L_d)^*(-d\theta_{Q}) = -d\theta_{f_d}^{-}.
\] 

The previous computations prove the following result.

\begin{proposition}\label{prop:omegaf is pullback of omegaQ}
If $\omega_{Q}$ is the canonical symplectic structure on $T^*Q$, then
$$(\ff_{f_d}^{+}L_d)^*(\omega_{Q}) =\omega_{f_d}^{+} \ \ \ \ \mbox{and} \ \ \ \ (\ff_{f_d}^{-}L_d)^*(\omega_{Q}) =\omega_{f_d}^{-}.$$
\end{proposition}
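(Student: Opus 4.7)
The plan is essentially to verify the pullback identity at the level of the primitive $1$-forms first, and then pass to exterior derivatives. All the heavy lifting is contained in the unnumbered display that immediately precedes the proposition statement, so the proof amounts to recording that computation.

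First I would observe that, by the definitions of the symplectic structure on $T^*Q$ and of the forms $\omega_{f_d}^{\pm}$, it suffices to prove the corresponding $1$-form identities
\[
(\ff_{f_d}^{+}L_d)^*(\theta_{Q}) = \theta_{f_d}^{+}
\quad\text{and}\quad
(\ff_{f_d}^{-}L_d)^*(\theta_{Q}) = \theta_{f_d}^{-},
\]
since $\omega_Q = -d\theta_Q$ and $\omega_{f_d}^{\pm}=-d\theta_{f_d}^{\pm}$, and pullback commutes with $d$.

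For the $(+)$-case, evaluate $(\ff_{f_d}^{+}L_d)^*(\theta_{Q})$ at $(q_0,q_1)$ on a tangent vector $(\delta q_0,\delta q_1)$ by unwinding the definition of the canonical $1$-form: $\theta_Q(\alpha_q)(v) = \alpha_q(T\pi_Q(v))$. The key geometric fact to use is $\pi_Q\circ \ff_{f_d}^{+}L_d=\pr_2$ (this is the content of Remark~\ref{preservation}: $\ff_{f_d}^{+}L_d(q_0,q_1)\in T^*_{q_1}Q$), so that $T\pi_Q\circ T\ff_{f_d}^{+}L_d = T\pr_2$, which collapses the computation to $(D_2 L_d(q_0,q_1)+f_d^+(q_0,q_1))(\delta q_1)=\theta_{f_d}^{+}(q_0,q_1)(\delta q_1)$. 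This is exactly the displayed chain of equalities given just before the proposition statement. For the $(-)$-case one repeats the argument with $\pi_Q\circ \ff_{f_d}^{-}L_d=\pr_1$.

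Finally, applying $-d$ to both identities and using naturality of $d$ under pullback yields
\[
(\ff_{f_d}^{+}L_d)^*(\omega_Q) = -d\,(\ff_{f_d}^{+}L_d)^*(\theta_Q) = -d\theta_{f_d}^{+} = \omega_{f_d}^{+},
\]
and analogously for the minus version. There is no real obstacle here; the only step that requires care is making the identification $T_{\alpha}T^*_{q}Q\simeq T^*_qQ$ compatible with the fiber-preserving property of $\ff_{f_d}^{\pm}L_d$, and this is handled precisely by the relations $\pi_Q\circ \ff_{f_d}^{\pm}L_d=\pr_{2},\pr_1$, which are built into the definition of the forced discrete Legendre transforms.
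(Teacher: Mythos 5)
Your proposal is correct and follows exactly the paper's own argument: the paper proves the proposition via the displayed computation immediately preceding it, establishing $(\ff_{f_d}^{\pm}L_d)^*(\theta_Q)=\theta_{f_d}^{\pm}$ from $\pi_Q\circ\ff_{f_d}^{+}L_d=\pr_2$ (resp. $\pr_1$) and then applying $-d$. Nothing is missing.
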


\subsection{Preservation of structures on $Q\times Q$}
\label{Sec:forced_strctures}

In the unforced case, it is well known that the symplectic structure $\omega_{L_d}$ associated to a regular discrete mechanical system $(Q,L_d)$ is preserved by its discrete flow $\bF_{L_d}$ (see Section 1.3.2 of \cite{M-West}). Given a FDMS $(Q,L_d,f_d)$, we ask ourselves under which conditions the symplectic structures $\omega_{f_d}^{+}$ and $\omega_{f_d}^{-}$ are preserved by the flow $\bF_{L_d,f_d}$. In order to do this, we consider paths $q.=(q_0,q_1,q_2)$ of length 2.

Following the ideas in \cite{M-West}, we define $\hat{\mathfrak{S}}_d : Q \times Q \lra \rr$ as
$$\hat{\mathfrak{S}}_d(q_0,q_1) := S_d(q_0,q_1,q_2)=L_d(q_0,q_1)+L_d(q_1,q_2),$$
where $q.=(q_0,q_1,q_2)$ is the solution of (\ref{forcedELe}) with initial condition $(q_0,q_1)$.

Let us define a $1$-form $\frakS_{f_d}$ on $Q \times Q$ by
\[
\frakS_{f_d} := f_d + (\bF_{L_d,f_d})^* f_d.
\]

Explicitly,
\[
\frakS_{f_d}(q_0,q_1)(\delta q_0,\delta q_1) = f_d(q_0,q_1)(\delta q_0,\delta q_1) + f_d(q_1,q_2)(\delta q_1,\delta q_2),
\]
where $q. := (q_0,q_1,q_2)$ is the solution of \eqref{forcedELe} with initial condition $(q_0,q_1)$ and $\delta q_2 := T_{(q_1,q_2)}\pr_2 (T_{(q_0,q_1)}\bF_{L_d,f_{d}} (\delta q_0,\delta q_1))$.

Therefore, using the same notation, 
\[
\begin{split}
	d\hat{\frakS}_d&(q_0,q_1)(\delta q_0,\delta q_1) + \frakS_{f_d}(q_0,q_1)(\delta q_0,\delta q_1) \\
	& =  [D_1 L_d(q_1,q_{2}) + f_d^-(q_1,q_{2}) + D_2 L_d(q_{0},q_1) + f_d^+(q_{0},q_1)] \ \delta q_1 \\
	& \ \ \ \ \ + [D_1 L_d(q_0,q_1) + f_d^-(q_0,q_1)] \ \delta q_0 + [D_2 L_d(q_{1},q_2) + f_d^+(q_{1},q_2)] \ \delta q_2  \\
	& = [D_1 L_d(q_0,q_1) + f_d^-(q_0,q_1)] \ \delta q_0 + [D_2 L_d(q_{1},q_2) + f_d^+(q_{1},q_2)] \ \delta q_2  \\
	& =\theta_{f_d}^{+}(\mathbf{F}_{L_d,f_{d}}(q_0,q_1))(T_{(q_0,q_1)}\mathbf{F}_{L_d,f_{d}}  (\delta q_0,\delta q_1)) - \theta_{f_d}^{-}(q_0,q_1)(\delta q_0,\delta q_1)  \\
	& = (\mathbf{F}_{L_d,f_{d}})^*(\theta_{f_d}^{+})(q_0,q_1)(\delta q_0, \delta q_1) - \theta_{f_d}^{-}(q_0,q_1)(\delta q_0,\delta q_1).
\end{split}
\]

That is,
\[
d \hat{\frakS}_d + \frakS_{f_d} = (\bF_{L_d,f_d})^* \theta_{f_d}^+ - \theta_{f_d}^-.
\]

Taking exterior differential, we have
\begin{equation*}\label{dfrakSf}
-d \frakS_{f_d} = (\mathbf{F}_{L_d,f_{d}})^*(-d\theta_{f_d}^{+}) + d\theta_{f_d}^{-} = (\mathbf{F}_{L_d,f_{d}})^{*}(\omega_{f_d}^{+})-\omega_{f_d}^{-}
\end{equation*}
and, from the definition of $\frakS_{f_d}$ we have that
\[
d\frakS_{f_d} =  d(f_d + (\mathbf{F}_{L_d,f_{d}})^{*} f_d) = df_d + (\mathbf{F}_{L_d,f_{d}} )^{*} (df_d).
\]

Thus,
\[
- df_d - (\mathbf{F}_{L_d,f_{d}})^{*} (df_d) = (\mathbf{F}_{L_d,f_{d}})^{*}(\omega_{f_d}^{+})-\omega_{f_d}^{-} \stackrel{\eqref{omegaf+ - omegaf-}}{=} (\mathbf{F}_{L_d,f_{d}})^{*}(\omega_{f_d}^{+}) - \omega_{f_d}^{+} - df_d,
\]
which yields
\[
(\mathbf{F}_{L_d,f_{d}})^{*}(\omega_{f_d}^+)-\omega_{f_d}^+ = (\mathbf{F}_{L_d,f_{d}})^{*} (-df_d).
\]

Equivalently, by Proposition \ref{prop:omegaf is pullback of omegaQ},
\[
(\mathbf{F}_{L_d,f_{d}})^{*}((\mathbb{F}_{f_d}^{+} L_d)^*(\omega_Q)) - (\mathbb{F}_{f_d}^{+} L_{d})^*(\omega_Q) = (\mathbf{F}_{L_d,f_{d}})^{*} (-df_d).
\]

Note that this implies that the forced discrete flow $\mathbf{F}_{L_d,f_{d}}$ does not necessarily preserve the symplectic form $\omega^{+}_{f_d}$. However, we observe how this structure evolves with the flow, as states the following result, which is discrete counterpart of what happens in the continuous framework.

\begin{proposition}\label{cffN=2}
Let $(Q,L_d,f_d)$ be a regular FDMS. Then the evolution of the symplectic form $\omega_{f_d}^+$ by the forced discrete flow $\mathbf{F}_{L_d,f_{d}}$ is given by
\[
(\mathbf{F}_{L_d,f_{d}})^{*}(\omega_{f_d}^{+}) - \omega_{f_d}^{+} =  (\mathbf{F}_{L_d,f_{d}}) ^{*}  (-df_d).
\]

Explicitly, if $q. = (q_0,q_1,q_2)$ is the solution of \eqref{forcedELe} with initial condition $(q_0,q_1)$ and given $(\delta q_0,\delta q_1) \in T_{(q_0,q_1)}(Q \times Q)$, $\delta q_2 := T_{(q_1,q_2)}\pr_2 (T_{(q_0,q_1)}\bF_{L_d,f_{d}} (\delta q_0,\delta q_1))$, then
\[
\omega_{f_d}^+ (q_1,q_2)(\delta q_1,\delta q_2) = \omega_{f_d}^+ (q_0,q_1)(\delta q_0,\delta q_1) + df_d(q_1,q_2)(\delta q_1,\delta q_2).
\]
\end{proposition}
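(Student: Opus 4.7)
My plan is to formalize the chain of computations carried out in the discussion immediately preceding the statement, in four clean steps.

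First, I introduce two auxiliary objects on $Q \times Q$. The function $\hat{\frakS}_d(q_0,q_1) := L_d(q_0,q_1) + L_d(q_1,q_2)$, where $q_2 := \pr_2(\bF_{L_d,f_d}(q_0,q_1))$, records the action along the unique length-$2$ trajectory determined by the initial condition $(q_0,q_1)$; and the $1$-form $\frakS_{f_d} := f_d + (\bF_{L_d,f_d})^* f_d$ records the corresponding discrete work along the same trajectory. Both depend implicitly on the flow $\bF_{L_d,f_d}$, which is well defined on a neighborhood of any trajectory by Theorem~\ref{thm:flow}.

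Second, I compute $d\hat{\frakS}_d + \frakS_{f_d}$ applied to a tangent vector $(\delta q_0,\delta q_1) \in T_{(q_0,q_1)}(Q\times Q)$, pairing $\delta q_2 := T_{(q_1,q_2)}\pr_2(T_{(q_0,q_1)}\bF_{L_d,f_d}(\delta q_0,\delta q_1))$. The coefficients of $\delta q_1$ collect to $D_2 L_d(q_0,q_1) + f_d^+(q_0,q_1) + D_1 L_d(q_1,q_2) + f_d^-(q_1,q_2)$, which vanishes identically because $(q_0,q_1,q_2)$ solves the forced discrete Euler--Lagrange equation~\eqref{forcedELe} at the middle node $q_1$. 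What survives is exactly $(\bF_{L_d,f_d})^*\theta_{f_d}^+ - \theta_{f_d}^-$, so that $d\hat{\frakS}_d + \frakS_{f_d} = (\bF_{L_d,f_d})^*\theta_{f_d}^+ - \theta_{f_d}^-$.

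Third, taking the exterior differential annihilates $d\hat{\frakS}_d$ and yields $d\frakS_{f_d} = -(\bF_{L_d,f_d})^*\omega_{f_d}^+ + \omega_{f_d}^-$; on the other hand, $d\frakS_{f_d} = df_d + (\bF_{L_d,f_d})^* df_d$ directly from its definition. Equating the two and using identity~\eqref{omegaf+ - omegaf-} in the form $\omega_{f_d}^- = \omega_{f_d}^+ + df_d$ to cancel the loose $df_d$ terms produces the global identity $(\bF_{L_d,f_d})^*\omega_{f_d}^+ - \omega_{f_d}^+ = (\bF_{L_d,f_d})^*(-df_d)$. Finally, the pointwise assertion is read off by evaluating at $(q_0,q_1)$ on $(\delta q_0,\delta q_1)$ and using $\bF_{L_d,f_d}(q_0,q_1) = (q_1,q_2)$ together with $\pr_1 \circ \bF_{L_d,f_d}(q_0,q_1) = q_1$, which forces the first component of $T_{(q_0,q_1)}\bF_{L_d,f_d}(\delta q_0,\delta q_1)$ to be $\delta q_1$.

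There is no genuine obstacle in this proof; the only delicate bookkeeping is the cancellation at the middle node in step two, which crucially uses \eqref{forcedELe}. Everything else is repackaging of identities already available in the preceding subsection, most notably \eqref{omegaf+ - omegaf-}.
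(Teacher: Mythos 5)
Your proof is correct and follows essentially the same route as the paper, which establishes the proposition precisely through the computation with $\hat{\frakS}_d$ and $\frakS_{f_d} = f_d + (\bF_{L_d,f_d})^* f_d$ that you formalize, including the cancellation at the middle node via \eqref{forcedELe} and the final use of \eqref{omegaf+ - omegaf-}. (Note only that evaluating the global identity pointwise actually yields $-df_d(q_1,q_2)(\delta q_1,\delta q_2)$ on the right-hand side, so the sign in the ``explicit'' display of the statement appears to be a typo in the paper rather than a defect of your argument.)
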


\begin{remark}
Analogously, the evolution of the symplectic form $\omega_{f_d}^-$ is given by
\[
(\mathbf{F}_{L_d,f_{d}})^{*}(\omega_{f_d}^{-}) - \omega_{f_d}^{-} =  (\mathbf{F}_{L_d,f_{d}}) ^{*}  (-df_d).
\]
\end{remark}

\begin{corollary}
If a FDMS $(Q,L_d,f_d)$ is regular and $f_d$ is closed, then the forced discrete flow $ \mathbf{F}_{L_d,f_{d}}$ preserves the symplectic structure $\omega_{f_d}$.
\end{corollary}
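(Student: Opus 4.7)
The plan is to derive the corollary as an immediate consequence of Proposition \ref{cffN=2} together with the observation at the end of Proposition \ref{prop:unique omegaf} that gives meaning to the symbol $\omega_{f_d}$ when $f_d$ is closed.

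First I would invoke Proposition \ref{prop:unique omegaf} to note that, under the hypotheses of regularity and $df_d=0$, the $2$-forms $\omega_{f_d}^+$ and $\omega_{f_d}^-$ coincide and define a single symplectic form $\omega_{f_d}$ on $Q\times Q$; this is what makes the statement well-posed.

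Next I would apply Proposition \ref{cffN=2} to obtain
\[
(\mathbf{F}_{L_d,f_d})^*(\omega_{f_d}^+) - \omega_{f_d}^+ = (\mathbf{F}_{L_d,f_d})^*(-df_d).
\]
Since $f_d$ is closed, $df_d=0$, so the right-hand side vanishes and $(\mathbf{F}_{L_d,f_d})^*(\omega_{f_d}^+)=\omega_{f_d}^+$. Rewriting $\omega_{f_d}^+=\omega_{f_d}$ using the identification of the previous paragraph yields $(\mathbf{F}_{L_d,f_d})^*(\omega_{f_d})=\omega_{f_d}$, which is the desired preservation.

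There is no real obstacle here: the entire argument is just the specialization of Proposition \ref{cffN=2} to the case $df_d=0$, combined with the fact that in that case $\omega_{f_d}^+$ and $\omega_{f_d}^-$ are the same form. One could equally well start from the $(-)$-version (as in the Remark immediately after Proposition \ref{cffN=2}) and reach the same conclusion, which provides a sanity check but adds nothing essential to the argument.
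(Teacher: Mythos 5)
Your proof is correct and follows exactly the same route as the paper: identify $\omega_{f_d}^+=\omega_{f_d}^-=\omega_{f_d}$ via Proposition \ref{prop:unique omegaf}, then specialize Proposition \ref{cffN=2} to $df_d=0$ so the right-hand side vanishes. No issues.
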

\begin{proof}
If $f_d$ is closed, then, by Proposition \ref{prop:unique omegaf}, there is a unique symplectic structure $\omega_{f_d}=\omega_{f_d}^+=\omega_{f_d}^-$. Finally, by the previous proposition,
\[
(\mathbf{F}_{L_d,f_{d}})^{*}(\omega_{f_d}) = \omega_{f_d}.
\]
\end{proof}

\begin{remark}
The statement of Proposition \ref{cffN=2} can be written equivalently, in terms of the canonical symplectic structure on $T^*Q$, as
\[
(\mathbf{F}_{L_d,f_{d}})^{*}((\mathbb{F}_{f_d}^{\pm}L_{d})^*(\omega_Q))-(\mathbb{F}_{f_d}^{\pm}L_{d})^*(\omega_Q)=(\mathbf{F}_{L_d,f_{d}})  ^{*}  (-df_d).
\]
\end{remark}

However, since in many cases the forces are not closed, it is interesting to analyze whether there are more general forces that, although not closed, give rise to symplectic structures preserved by the forced discrete flow. As we already said, in Section \ref{sec:routh} we consider the reduction of certain symmetries of a discrete mechanical system in a procedure that we call discrete Routh reduction. This reduction process gives rise to a FDMS whose forces are in general not closed, but possess certain characteristics that allow us to define (modifying the canonical structure of the cotangent bundle) a symplectic structure which results invariant by the forced discrete flow.

In what follows we will consider FDMS with forces that enjoy these characteristics and we will prove that they guarantee the definition of a symplectic structure preserved by the flow of the system.

Let us consider a regular FDMS $(Q,L_d,f_d)$ with forces $f_d$ satisfying that its exterior differential can be written as $-df_d = \pr_2^* \beta - \pr_1^* \beta$, where $\beta$ is a 2-form on $Q$ and $\pr_i : Q \times Q \lra Q$, $i=1,2$, is the projection on the $i$-th factor. In the first place, we are going to study the appearence of the forces that satisfy this condition. Then, we are going to show that this form $\beta$ can be used to modify the canonical structure on $T^*Q$ in such a way that the pullback by the forced discrete Legendre transforms of the modified structure gives rise to symplectic structures on $Q\times Q$ that are preserved by the forced discrete flow.

\begin{definition}\label{def:routh system}
Let $(Q,L_d,f_d)$ be a FDMS, we say that $f_d$ is a {\it Routh force} if there exists a 2-form $\beta \in \Omega^2(Q)$ such that $-df_d = \pr_2^* \beta - \pr_1^* \beta$. In this case, we say that the system is of {\it Routh type}, that $f_d$ is \emph{generated} by $\beta$ and that $\beta$ is its \emph{Routh potential}.
\end{definition}

\begin{proposition}\label{beta}
Let $f_d$ be a 1-form on $Q \times Q$ written as
\[  f_d(q_0,q_1)(\delta q_0,\delta q_1) = f_d^- (q_0,q_1)(\delta q_0) + f_d^+ (q_0,q_1)(\delta q_1) \]
where $f_d^- (q_0,q_1) \in T_{q_0}^*Q$, $f_d^+ (q_0,q_1) \in T_{q_1}^*Q$.
If $f_d$ is a Routh force generated by $\beta$, then in local coordinates $(q^1,\ldots,q^n)$,
\begin{enumerate}
\item $\displaystyle{\frac{\partial (f_d^+)^i}{\partial q_0^j}(q) = \frac{\partial (f_d^-)^j}{\partial q_1^i}(q)}$

\item $\displaystyle{\frac{\partial}{\partial q_1} \frac{\partial (f_d^-)^i}{\partial q_0^j} (q)= \frac{\partial}{\partial q_0} \frac{\partial (f_d^+)^i}{\partial q_1^j}(q) = 0}$

\item $\displaystyle{- \frac{\partial (f_d^-)^j}{\partial q_0^i}(q) = \frac{\partial (f_d^+)^j}{\partial q_1^i}(q)}$,
\end{enumerate}
where $f_d^+=(f_d^+)^i dq^i$ and $f_d^-=(f_d^-)^i dq^i$.

Moreover, $\beta$ is given by
\begin{equation}\label{eq:beta in coordinates}
\beta(q) = \frac{\partial (f_d^-)^j}{\partial q_0^i}(q) \ dq^j \wedge dq^i = -\frac{\partial (f_d^+)^j}{\partial q_1^i}(q) \ dq^j \wedge dq^i.
\end{equation}
\end{proposition}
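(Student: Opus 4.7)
The plan is to prove the proposition by a direct local-coordinate expansion of the defining identity $-df_d = \pr_2^*\beta - \pr_1^*\beta$. The key structural observation is that $\pr_2^*\beta - \pr_1^*\beta$, being a difference of pullbacks of a $2$-form on $Q$, decomposes into a pure $dq_0\wedge dq_0$ piece (pulled back from the first factor) and a pure $dq_1\wedge dq_1$ piece (pulled back from the second), with \emph{no} mixed $dq_0\wedge dq_1$ contribution. This trichotomy drives the whole argument.

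Concretely, writing $f_d = (f_d^-)^i(q_0,q_1)\,dq_0^i + (f_d^+)^i(q_0,q_1)\,dq_1^i$ and taking the exterior derivative produces four families of coefficient-weighted basis $2$-forms, of types $dq_0^k\wedge dq_0^i$, $dq_1^k\wedge dq_0^i$, $dq_0^k\wedge dq_1^i$ and $dq_1^k\wedge dq_1^i$. Matching this expansion component-by-component against $\pr_2^*\beta - \pr_1^*\beta$ yields the three claims. The vanishing of the combined mixed contribution, after grouping by $dq_0^j\wedge dq_1^i$ and using the antisymmetry of the wedge, gives (1). The pure $dq_0\wedge dq_0$ match equates $\pr_1^*\beta$ with a combination of $\partial(f_d^-)^j/\partial q_0^i$; since the left-hand side depends only on $q_0$, the relevant coefficient combination is forced to be $q_1$-independent, yielding the first half of (2) and simultaneously exhibiting the formula $\beta(q)=\frac{\partial(f_d^-)^j}{\partial q_0^i}(q)\,dq^j\wedge dq^i$ of \eqref{eq:beta in coordinates}. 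The pure $dq_1\wedge dq_1$ match does the analogous job for $f_d^+$, producing the second half of (2) and the alternate formula $\beta(q) = -\frac{\partial(f_d^+)^j}{\partial q_1^i}(q)\,dq^j\wedge dq^i$. Finally, equating the two resulting expressions for $\beta$ at a common point $q\in Q$ yields (3).

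I expect the main obstacle to be the careful bookkeeping of indices and the implicit antisymmetrization inherent in the wedge basis: the identities (1)--(3) are stated in terms of non-antisymmetrized coefficient functions, so one has to be explicit about conventions --- for instance, using the ordered basis $\{dq^k\wedge dq^l : k<l\}$ and carefully collecting contributions from both orderings of the dummy indices --- in order to extract the stated pointwise partial-derivative identities, rather than merely their antisymmetric parts in the free indices.
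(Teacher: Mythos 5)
Your proposal is correct and follows essentially the same route as the paper's own proof: expand $df_d = df_d^- + df_d^+$ in local coordinates, match the $dq_0\wedge dq_0$, $dq_0\wedge dq_1$ and $dq_1\wedge dq_1$ blocks against $\pr_2^*\beta - \pr_1^*\beta$, and read off (1)--(3) together with the coordinate formula for $\beta$. The antisymmetrization caveat you flag at the end is a genuine subtlety, but the paper's proof treats it in exactly the same implicit way (identifying the unsymmetrized coefficients directly), so your plan is fully aligned with the published argument.
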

\begin{proof}
Note that, since $df_d=df_d^{+}+ df_d^-$, then
\begin{equation*}
\begin{split}
df_d &= \frac{\partial (f_d^-)_i}{\partial q_0^j} \ dq_0^j \wedge dq_0^i + \frac{\partial (f_d^+)_i}{\partial q_1^j} \ dq_1^j \wedge dq_1^i + \frac{\partial (f_d^+)_i}{\partial q_0^j} dq_0^j \wedge dq_1^i \\
& \ \ \ \ \ + \frac{\partial (f_d^-)_i}{\partial q_1^j} \ dq_1^j \wedge dq_0^i \\
&= \frac{\partial (f_d^-)_i}{\partial q_0^j} \ dq_0^j \wedge dq_0^i + \frac{\partial (f_d^+)_i}{\partial q_1^j} \ dq_1^j \wedge dq_1^i \\
& \ \ \ \ \ + \left( \frac{\partial (f_d^+)_i}{\partial q_0^j} - \frac{\partial (f_d^-)_j}{\partial q_1^i} \right) \ dq_0^j \wedge dq_1^i.
\end{split}
\end{equation*}

Then, if $df_d$ can be written as $-df_d = \pr_2^* \beta - \pr_1^* \beta$, with $\beta$ a 2-form on $Q$,
\[
\pr_1^* \beta = \frac{\partial (f_d^-)_i}{\partial q_0^j} \ dq_0^j \wedge dq_0^i, \quad \pr_2^* \beta = -\frac{\partial (f_d^+)_i}{\partial q_1^j} \ dq_1^j \wedge dq_1^i,
\]
\[
\frac{\partial (f_d^+)_i}{\partial q_0^j} = \frac{\partial (f_d^-)_j}{\partial q_1^i}.
\]

On the one hand, since $\pr_2^* \beta$ and $\pr_1^* \beta$ are 2-forms on $Q\times Q$ we have that
\[
\frac{\partial}{\partial q_1} \frac{\partial (f_d^-)_i}{\partial q_0^j} = \frac{\partial}{\partial q_0} \frac{\partial (f_d^+)_i}{\partial q_1^j} = 0.
\]

On the other hand, $\displaystyle{\beta(q_0) = \pr_1^* \beta(q_0,q_1)}$ and $\displaystyle{\beta(q_1) = \pr_2^* \beta(q_0,q_1)}.$

Now, if $\displaystyle{\beta = \beta_{ji} \ dq^j \wedge dq^i}$,
\[
\beta_{ji}(q) = \frac{\partial (f_d^-)_i}{\partial q_0^j}(q) = - \frac{\partial (f_d^+)_i}{\partial q_1^j}(q)
\]
and, finally,
\[
\beta(q) = \frac{\partial (f_d^-)_i}{\partial q_0^j}(q) \ dq^j \wedge dq^i.
\]
\end{proof}

We will use this form $\beta$ to modify the form $\omega_Q$ in such a way that its pullback by the discrete Legendre transforms defines a symplectic structure in $Q\times Q$ preserved by the discrete flow $\mathbf{F}_{L_d,f_{d}}$.

We begin by noting that the evolution of $\pr^*_{1}\, \beta$ by $\mathbf{F}_{L_d,f_{d}}$ agrees with $\pr_2^{*}\, \beta$ on trajectories of the system $(Q,L_d,f_d)$.

\begin{lemma}\label{proyeccionesdebetaN=2}
If $q_\cdot=(q_0,q_1,q_2)$ is a solution of the forced discrete Euler-Largrange equations (\ref{forcedELe}), then
\[ \pr_2^{*}\, \beta =(\mathbf{F}_{L_d,f_{d}})^{*}( \pr^*_{1}\, \beta) \]
on this trajectory, that is,
\[
(\pr_2^* \beta)(q_0,q_1) = (\bF_{L_d,f_d})^* (\pr_1^* \beta)(q_0,q_1).
\]
\end{lemma}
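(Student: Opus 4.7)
The plan is to reduce the statement to the tautological identity $\pr_1 \circ \bF_{L_d,f_d} = \pr_2$ on the domain $U$ of the flow, and then invoke the functoriality of pullbacks. First I would unpack the defining property of the local flow (Theorem \ref{thm:flow}, item (1)): on the open set $U \subset Q\times Q$ where $\bF_{L_d,f_d}$ is defined, for every $(q_0, q_1)\in U$ the image $\bF_{L_d,f_d}(q_0, q_1) = (q_1, q_2)$ has first coordinate equal to $q_1$. Applying $\pr_1$ to this equality yields $\pr_1(\bF_{L_d,f_d}(q_0, q_1)) = q_1 = \pr_2(q_0, q_1)$, that is, $\pr_1 \circ \bF_{L_d,f_d} = \pr_2$ as maps on $U$.

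Once that identity is in hand, the lemma follows from functoriality of the pullback: for any differential form $\beta$ on $Q$ one has
\[
(\bF_{L_d,f_d})^*(\pr_1^*\beta) = (\pr_1 \circ \bF_{L_d,f_d})^*\beta = \pr_2^*\beta,
\]
which, evaluated at $(q_0,q_1)$, is precisely the stated equality. If more detail is desired, it can be checked directly on tangent vectors $(\delta q_0, \delta q_1), (\dot q_0, \dot q_1) \in T_{(q_0,q_1)}(Q\times Q)$ by noting that both sides reduce to $\beta(q_1)(\delta q_1, \dot q_1)$, since the pushforward by $\bF_{L_d,f_d}$ has first component given by $\delta q_1$ (resp.\ $\dot q_1$), matching the second component of the input.

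I do not foresee any real obstacle: the proof uses neither the Routh condition $-df_d = \pr_2^*\beta - \pr_1^*\beta$ nor any structural feature of $\beta$; it is purely a consequence of how the flow is constructed. The only reason the statement is phrased ``on the trajectory'' is to guarantee that $\bF_{L_d,f_d}(q_0,q_1)$ is defined, i.e.\ that $(q_0,q_1)\in U$. This also explains why the lemma will interact well with the preservation results of Proposition \ref{cffN=2}: the identity $\pr_2^*\beta = (\bF_{L_d,f_d})^*(\pr_1^*\beta)$ is really a relation between the two natural ``$\beta$-terms'' appearing on consecutive Legendre transforms along the flow, ready to be combined with the evolution formula for $\omega_{f_d}^\pm$ to produce the preserved modified symplectic structure announced after Definition \ref{def:routh system}.
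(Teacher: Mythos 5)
Your proof is correct and follows essentially the same route as the paper's: both arguments boil down to the identity $\pr_1 \circ \mathbf{F}_{L_d,f_d} = \pr_2$ on the domain of the flow, combined with functoriality of the pullback (the paper writes this out by evaluating on tangent vectors, exactly as in your optional direct check). Your observation that the hypothesis on $q_\cdot$ only serves to guarantee that the flow is defined at $(q_0,q_1)$ is also accurate.
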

\begin{proof}
Let $(q_0,q_1,q_2)$ be a solution of the forced discrete Euler--Largrange equations (\ref{forcedELe}). Then, given $(\delta q_0,\delta q_1) \in T_{(q_0,q_1)} (Q \times Q)$,
\[
(\mathbf{F}_{L_d,f_{d}})^{*} (\pr^*_{1} \beta)(q_0,q_{1})(\delta q_0,\delta q_{1}) = (\pr_1 \circ \ \mathbf{F}_{L_d,f_{d}} )^* \beta(q_0,q_{1}) (\delta q_0,\delta q_{1}) = \beta(q_1)(\delta q_1).
\]

On the other hand,
\[
\pr_2^{*} \beta (q_0,q_{1}) (\delta q_0,\delta q_{1}) = \beta (q_{1})(\delta q_{1}),
\]
and, comparing both equations, we have
\[
\pr_2^{*} \beta (q_0,q_{1}) = (\mathbf{F}_{L_d,f_{d}})^{*}(\pr^*_{1} \beta)(q_0,q_{1}).
\]
\end{proof}

\begin{lemma}
Let $(Q,L_d,f_d)$ be a FDMS of Routh type with potential $\beta \in \Omega^2(Q)$. Then, on the trajectories of the system,
\[  (\mathbf{F}_{L_d,f_{d}})^{*} \omega_{f_d}^{+} -  \omega_{f_d}^{+}=( \mathbf{F}_{L_d,f_{d}})^{*}(\pr_2^*\beta)-  \pr_2^*\beta. \]
\end{lemma}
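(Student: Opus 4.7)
The plan is to chain together Proposition \ref{cffN=2} and Lemma \ref{proyeccionesdebetaN=2}, using the defining property of a Routh force. First, Proposition \ref{cffN=2} applied to the regular FDMS $(Q,L_d,f_d)$ yields, on trajectories,
\[
(\mathbf{F}_{L_d,f_{d}})^{*}\omega_{f_d}^{+} - \omega_{f_d}^{+} = (\mathbf{F}_{L_d,f_{d}})^{*}(-df_d).
\]
Because the system is of Routh type with potential $\beta$, Definition \ref{def:routh system} lets me rewrite $-df_d = \pr_2^*\beta - \pr_1^*\beta$, so that the right-hand side becomes
\[
(\mathbf{F}_{L_d,f_{d}})^{*}(\pr_2^*\beta) - (\mathbf{F}_{L_d,f_{d}})^{*}(\pr_1^*\beta).
\]

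The only remaining step is to eliminate the second summand. Here Lemma \ref{proyeccionesdebetaN=2} is directly applicable: it asserts precisely that, on trajectories of the forced discrete Euler--Lagrange equations, $(\mathbf{F}_{L_d,f_{d}})^{*}(\pr_1^*\beta) = \pr_2^*\beta$. Substituting this identity and comparing with the first display yields exactly the stated equality.

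Since all three ingredients (Proposition \ref{cffN=2}, Definition \ref{def:routh system}, Lemma \ref{proyeccionesdebetaN=2}) are already available, there is no real obstacle; the proof is essentially a two-line substitution. The only point worth emphasizing is that both equalities hold only on trajectories of $(Q,L_d,f_d)$ (since Lemma \ref{proyeccionesdebetaN=2} is pointwise along a solution), which matches the scope of the statement and ensures nothing is claimed off-trajectory.
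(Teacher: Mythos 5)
Your proof is correct and follows exactly the same route as the paper: Proposition \ref{cffN=2}, then the Routh-force identity $-df_d = \pr_2^*\beta - \pr_1^*\beta$, then Lemma \ref{proyeccionesdebetaN=2} to replace $(\mathbf{F}_{L_d,f_{d}})^{*}(\pr_1^*\beta)$ by $\pr_2^*\beta$ on trajectories. Nothing further is needed.
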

\begin{proof}
As we have already observed in Proposition \ref{cffN=2},
\[ (\mathbf{F}_{L_d,f_{d}})^{*}\omega_{f_d}^{+} - \omega_{f_d}^{+}=(\mathbf{F}_{L_d,f_{d}})^*(-df_d).  \]

Now, since $f_d$ is a Routh force,
\[  (\mathbf{F}_{L_d,f_{d}})^{*}(\omega_{f_d}^{+}) - \omega_{f_d}^{+}  =   (\mathbf{F}_{L_d,f_{d}})^{*}(\pr^*_2\beta) - ( \mathbf{F}_{L_d,f_{d}})^{*}(\pr^*_1\beta).\]

Thus, by Lemma \ref{proyeccionesdebetaN=2}, we obtain that on trajectories of $(Q,L_d,f_d)$,
\[  (\mathbf{F}_{L_d,f_{d}})^{*}(\omega_{f_d}^{+}) - \omega_{f_d}^{+}  =   (\mathbf{F}_{L_d,f_{d}})^{*}(\pr^*_2\beta) -  \pr^*_2\beta. \]
\end{proof}

\begin{proposition}\label{prop:regularity}
Let $(Q,L_d,f_d)$ be a FDMS of Routh type with potential $\beta \in \Omega^2(Q)$. Then, the 2-form 
\[ \omega^+:= (\ff_{f_d}^+L_d)^* (\omega_Q - \pi_Q^* \beta)  \]
is preserved by the flow $\mathbf{F}_{L_d,f_{d}}$.
\end{proposition}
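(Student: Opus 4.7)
The plan is to reduce the claim to two facts already established in the excerpt: the identification $(\ff_{f_d}^+ L_d)^* \omega_Q = \omega_{f_d}^+$ from Proposition \ref{prop:omegaf is pullback of omegaQ}, and the evolution formula for $\omega_{f_d}^+$ from the lemma immediately preceding the statement. The strategy is to expand the pullback defining $\omega^+$, rewrite it in terms of $\omega_{f_d}^+$ and $\pr_2^*\beta$, and then compute $(\mathbf{F}_{L_d,f_d})^* \omega^+$ and observe that the correction terms cancel exactly.

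First I would use the identity $\pi_Q \circ \ff_{f_d}^+ L_d = \pr_2$, which was noted when proving Proposition \ref{prop:omegaf is pullback of omegaQ}, to write
\[
\omega^+ = (\ff_{f_d}^+ L_d)^*\omega_Q - (\ff_{f_d}^+ L_d)^*(\pi_Q^*\beta) = \omega_{f_d}^+ - \pr_2^*\beta.
\]
Applying $(\mathbf{F}_{L_d,f_d})^*$ to both sides gives
\[
(\mathbf{F}_{L_d,f_d})^*\omega^+ = (\mathbf{F}_{L_d,f_d})^*\omega_{f_d}^+ - (\mathbf{F}_{L_d,f_d})^*(\pr_2^*\beta).
\]

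Next I would invoke the preceding lemma, which states that on trajectories of the system,
\[
(\mathbf{F}_{L_d,f_d})^*\omega_{f_d}^+ - \omega_{f_d}^+ = (\mathbf{F}_{L_d,f_d})^*(\pr_2^*\beta) - \pr_2^*\beta.
\]
Substituting into the previous display, the terms $(\mathbf{F}_{L_d,f_d})^*(\pr_2^*\beta)$ cancel, leaving
\[
(\mathbf{F}_{L_d,f_d})^*\omega^+ = \omega_{f_d}^+ - \pr_2^*\beta = \omega^+,
\]
which is the desired preservation.

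There is no substantial obstacle here — the proof is essentially a bookkeeping exercise chaining the pullback identity and the evolution lemma. The only subtle point worth stating explicitly is that both identities hold only on trajectories (since the flow $\mathbf{F}_{L_d,f_d}$ is defined there), so the conclusion is the preservation of $\omega^+$ along the flow, consistent with how the flow is defined in Theorem \ref{thm:flow}.
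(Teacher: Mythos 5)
Your argument is exactly the paper's proof: the identity $\omega^+ = \omega_{f_d}^+ - \pr_2^*\beta$ via $\pi_Q \circ \ff_{f_d}^+L_d = \pr_2$, followed by the evolution lemma and the cancellation of the $(\mathbf{F}_{L_d,f_d})^*(\pr_2^*\beta)$ terms, which the paper leaves implicit by saying the result is ``immediate by the previous lemma.'' Your explicit remark that the identities hold on trajectories is a correct and welcome clarification, but the route is the same.
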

\begin{proof}
Since $\pi_Q \circ \ff_{f_d}^+L_d = \pr_2$,
\begin{equation}\label{eq:omega+ = omegaf+ - pr2beta}
\omega^+= (\ff_{f_d}^+L_d)^* (\omega_Q - \pi_Q^* \beta) =\omega^{+}_{f_d} - \pr_2^*\beta
\end{equation}
and the result is immediate by the previous lemma.
\end{proof}

\begin{remark}
A similar argument proves that
\[
\omega^- := (\ff_{f_d}^-L_d)^* (\omega_Q - \pi_Q^* \beta) = \omega_{f_d}^- - \pr_1^* \beta
\]
is a $2$-form preserved by the flow of the system.
\end{remark}

We have already seen that if a FDMS is of Routh type, there are two $2$-forms conserved by the flow of the system. We have yet to see the relation between their symplecticity and regularity of the system.

\begin{proposition}\label{regularity}
Let $(Q,L_d,f_d)$ be a FDMS of Routh type with potential $\beta \in \Omega^2(Q)$. The system is regular if and only if the $2$-forms
\[  \omega^+ := (\ff_{f_d}^+L_d)^* (\omega_Q - \pi_Q^* \beta), \quad \omega^- := (\ff_{f_d}^-L_d)^* (\omega_Q - \pi_Q^* \beta), \]
are symplectic structures on $Q \times Q$.
\end{proposition}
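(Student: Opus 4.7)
The plan is to reduce the statement to two ingredients: (a) that $\omega_Q - \pi_Q^*\beta$ is a symplectic form on $T^*Q$, and (b) the elementary fact that the pullback of a symplectic form by a smooth map between equidimensional manifolds is non-degenerate exactly where the map is a local diffeomorphism. I would begin by verifying that $\omega^+$ and $\omega^-$ are always closed, regardless of regularity. Applying the exterior derivative to the Routh condition $-df_d = \pr_2^*\beta - \pr_1^*\beta$ gives $\pr_2^* d\beta = \pr_1^* d\beta$ on $Q\times Q$; since the two sides depend on disjoint sets of variables, both must vanish, so $d\beta = 0$ on $Q$. Consequently $\omega_Q - \pi_Q^*\beta$ is closed on $T^*Q$, and so are the pullbacks $\omega^{\pm}$.

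Next I would check non-degeneracy of $\omega_Q - \pi_Q^*\beta$ on $T^*Q$. In Darboux coordinates $(q^i, p_i)$ the magnetic term $\pi_Q^*\beta$ involves only $dq^j \wedge dq^i$, leaving the $dq \wedge dp$ pairing untouched; the resulting matrix is block-triangular with invertible off-diagonal blocks, hence non-singular. Thus $\omega_Q - \pi_Q^*\beta$ is a symplectic form on $T^*Q$.

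The key step is then ingredient (b): for a smooth map $F\colon M \to N$ between manifolds of equal dimension and a symplectic form $\Omega$ on $N$, $F^*\Omega$ is non-degenerate at $p$ if and only if $T_pF$ is an isomorphism. Indeed, any $v \in \ker T_pF$ lies in $\ker F^*\Omega_p$, so non-degeneracy forces $T_pF$ to be injective, hence an isomorphism; conversely, if $T_pF$ is an isomorphism then $F^*\Omega_p(v,\cdot) = 0$ forces $\Omega_{F(p)}(T_pF v,\cdot)$ to vanish on all of $T_{F(p)}N$, giving $v = 0$ by non-degeneracy of $\Omega$. Applying this with $F = \ff_{f_d}^\pm L_d$ and $\Omega = \omega_Q - \pi_Q^*\beta$, we conclude that $\omega^+$ (respectively $\omega^-$) is symplectic if and only if $\ff_{f_d}^+ L_d$ (respectively $\ff_{f_d}^- L_d$) is a local diffeomorphism, and taking both together recovers precisely the regularity of $(Q,L_d,f_d)$.

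I expect no genuine obstacle. An alternative, fully computational route would avoid (b) by using equation \eqref{eq:omega+ = omegaf+ - pr2beta} to write $\omega^+ = \omega_{f_d}^+ - \pr_2^*\beta$ and noting, via the coordinate expression \eqref{eq:beta in coordinates}, that $\pr_2^*\beta$ exactly cancels the second-factor diagonal block $D$ of the matrix $[\omega_{f_d}^+]$ computed in Lemma \ref{SYMP}. What remains is a block anti-diagonal matrix whose non-singularity is equivalent to invertibility of $D_1D_2 L_d + D_1 f_d^+$, which is precisely the first half of the regularity criterion of Remark \ref{regularity1}; the $\omega^-$ case is entirely analogous. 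I would favour the conceptual route, which makes transparent the role of $\omega_Q - \pi_Q^*\beta$ as the magnetic symplectic form that $\omega^{\pm}$ pulls back from.
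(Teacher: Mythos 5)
Your proposal is correct, and your primary route is genuinely different from the paper's. The paper argues entirely in coordinates: it first checks $d\beta=0$ directly from the explicit expression \eqref{eq:beta in coordinates} (symmetry of second derivatives against antisymmetry of the wedge), then uses $\omega^+=\omega_{f_d}^+-\pr_2^*\beta$ to observe that the magnetic term cancels exactly the $dq_1\wedge dq_1$ block of $[\omega_{f_d}^+]$ from Lemma \ref{SYMP}, leaving the block anti-diagonal matrix $\left(\begin{smallmatrix}0 & B\\ -B^t & 0\end{smallmatrix}\right)$ whose invertibility is equivalent to that of $B$, i.e.\ to regularity; this is precisely the ``alternative, fully computational route'' you sketch at the end. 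Your main argument instead isolates two reusable facts: that $\omega_Q-\pi_Q^*\beta$ is itself a symplectic form on $T^*Q$ (which the paper never states explicitly, though it is implicit in its ``magnetic term'' remark), and the general lemma that for $F\colon M\lra N$ with $\dim M=\dim N$ and $\Omega$ symplectic, $F^*\Omega$ is non-degenerate at $p$ iff $T_pF$ is an isomorphism. Both steps are sound; your derivation of $d\beta=0$ by differentiating $-df_d=\pr_2^*\beta-\pr_1^*\beta$ and noting that a form that is simultaneously a pullback along $\pr_1$ and along $\pr_2$ must vanish is also valid (and slicker than the paper's computation, which relies on the coordinate formula for $\beta$ established in Proposition \ref{beta}). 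What your route buys is conceptual transparency and independence from the matrix bookkeeping of Remark \ref{regularity1} and Lemma \ref{SYMP}; what the paper's route buys is the explicit local expression $\omega^+=-\bigl(\partial^2 L_d/\partial q_0^j\partial q_1^i+\partial (f_d^+)_i/\partial q_0^j\bigr)\,dq_0^j\wedge dq_1^i$, which exhibits concretely how the Routh potential removes the obstruction to non-degeneracy present in $\omega_{f_d}^+$. One cosmetic point: the matrix of $\omega_Q-\pi_Q^*\beta$ in Darboux coordinates is block \emph{anti}-triangular rather than block-triangular, but the invertibility conclusion is unaffected.
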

\begin{proof}
Notice that $\beta$ is closed. Indeed, recalling \eqref{eq:beta in coordinates},
\[
d\beta = \frac{\partial^2 (f_d^-)_i}{\partial q_0^k \partial q_0^j} \ dq_0^k \wedge dq_0^j \wedge dq_0^i = -\sum_i \underbrace{\left( \sum_{k,j} \frac{\partial^2 (f_d^-)_i}{\partial q_0^k \partial q_0^j} \ dq_0^k \wedge dq_0^j \right)}_{=0} \wedge dq_0^i = 0.
\]

Hence, both $\omega^+$ and $\omega^-$ are closed. Let us see that $\omega^+$ is non degenerate. We first recall that
\[
\omega^+ \stackrel{\eqref{eq:omega+ = omegaf+ - pr2beta}}{=} \omega_{f_d}^+ - \pr_2^* \beta.
\]

Since
\[
(\pr_2^* \beta)(q_0,q_1) = -\frac{\partial (f_d^+)_i}{\partial q_1^j}(q_1) \ dq_1^j \wedge dq_1^i,
\]
using the expression of $\omega_{f_d}^+$ (computed in \eqref{2-eq:omega_f+ en coordenadas}),
\[
\omega^+ = \omega_{f_d}^+ - \pr_2^* \beta = - \left( \frac{\partial^2 L_d}{\partial q_0^j \partial q_1^i} + \frac{\partial (f_d^+)_i}{\partial q_0^j} \right) \ dq_0^j \wedge dq_1^i.
\]

So, taking into account the notation used in Lemma \ref{SYMP}, the matrix of $\omega^+$ is given by
$$[\omega^+] =
\left(
\begin{array}{cc}
0 & B \\
-B^t & 0
\end{array}
\right),$$
and it is non singular under the same regularity conditions of Lemma \ref{SYMP}.

Reciprocally, the non degeneracy of the form $\omega^+$ implies that the matrix $[\omega^+]$ is invertible, which forces $B$ to be invertible, and this last condition is precisely the Legendre transform $\ff_{f_d}^+ L_d$ being a local diffeomorphism. The same arguments applied to $\omega^-$ complete the proof.
\end{proof}

We can now state the following result.

\begin{theorem}\label{theorem:omega+ is symplectic and preserved}
Let $(Q,L_d,f_d)$ be a regular FDMS of Routh type with potential $\beta \in \Omega^2(Q)$. Then, the flow $\mathbf{F}_{L_d,f_{d}}$ preserves the symplectic structures  
\[ \omega^+ = (\ff_{f_d}^+L_d)^* (\omega_Q - \pi_Q^* \beta), \  \  \omega^- = (\ff_{f_d}^-L_d)^* (\omega_Q - \pi_Q^* \beta).  \]
\end{theorem}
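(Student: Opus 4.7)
The plan is to observe that this theorem is essentially a combination of two results already established earlier in this section, and therefore requires no new computation. The hypotheses split cleanly into two pieces: regularity of the FDMS, and being of Routh type with potential $\beta$. Each hypothesis yields one half of the conclusion.

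First, I would invoke Proposition \ref{regularity}, which asserts that the FDMS $(Q,L_d,f_d)$ of Routh type with potential $\beta$ is regular if and only if the two $2$-forms
\[
\omega^+ = (\ff_{f_d}^+L_d)^* (\omega_Q - \pi_Q^* \beta), \quad \omega^- = (\ff_{f_d}^-L_d)^* (\omega_Q - \pi_Q^* \beta)
\]
are symplectic structures on $Q \times Q$. Since we assume $(Q,L_d,f_d)$ is regular, both $\omega^+$ and $\omega^-$ are symplectic.

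Second, I would apply Proposition \ref{prop:regularity}, which states that $\omega^+$ is preserved by $\mathbf{F}_{L_d,f_d}$ whenever the system is of Routh type with potential $\beta$, together with the remark immediately following it that provides the analogous preservation statement for $\omega^-$. Combining these with the symplecticity established above gives the theorem.

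There is no real obstacle here; the theorem merely packages together the symplecticity (furnished by regularity) and the preservation (furnished by the Routh type hypothesis) established in the two previous propositions. If one wished to make the proof self-contained, one could also give a direct chain of equalities starting from $(\mathbf{F}_{L_d,f_d})^*\omega^+ = (\mathbf{F}_{L_d,f_d})^*(\omega_{f_d}^+ - \pr_2^*\beta)$, using identity \eqref{eq:omega+ = omegaf+ - pr2beta}, Proposition \ref{cffN=2}, Lemma \ref{proyeccionesdebetaN=2}, and the Routh type condition $-df_d = \pr_2^*\beta - \pr_1^*\beta$ to reduce everything to $\omega^+$, but this is exactly the content already compiled in Proposition \ref{prop:regularity}.
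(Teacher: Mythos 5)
Your proposal is correct and matches the paper's intent exactly: the paper states this theorem without proof, presenting it as the immediate combination of Proposition \ref{regularity} (regularity $\Leftrightarrow$ symplecticity of $\omega^\pm$) and Proposition \ref{prop:regularity} together with its following remark (preservation of $\omega^\pm$ for Routh-type systems). Nothing further is needed.
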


\begin{remark}
The term $\pi_{Q}^* \beta$ can be interpreted as a correction term for the canonical symplectic structure that allows its pullback by the forced discrete Legendre transform to be a conserved symplectic structure on $Q \times Q$. In this context, we may call it a \emph{magnetic term}, in analogy with the systems known as magnetic systems in the continuous setting (see, for example, Section 6.6 of \cite{M-R}).
\end{remark}

\section{Symmetries and reduction of Lagrangian discrete mechanical systems}

In this section we will present a reduction process for discrete mechanical systems with symmetries according to the reduction process established in \cite{C-F-T-Z} for forced discrete mechanical systems. Thus, in order to formulate the variational principle and write the equations of motion of the reduced system obtained by eliminating symmetries of a discrete mechanical system $(Q,L_d)$, we first present a summary of these results.

\subsection{Symmetries and reduced dynamics}
\label{sec:red_dyn}

Let $\frakA$ be a principal connection on the principal bundle $\pi : M \lra M/G$ with horizontal space $\Hor_\frakA$, so that $TM = \mathcal{V}^\SG \oplus \Hor_\frakA$, where the \jdef{vertical bundle} $\mathcal{V}^\SG$ over $M$ is a subbundle of $TM$ with fibers $\mathcal{V}^\SG_x:=T_x(l^M_\SG(\{ x \}))$. The {\it horizontal lift} associated to $\frakA$ is the map $h : M \times_{M/G} T(M/G) \lra TM$ defined as
$$ h(x,w_{\pi(x)}) =  h^x(w_{\pi(x)}) := v_x \ \text{if} \ v_x \in \Hor_\frakA(x) \ \text{and} \ T_x\pi ( v_x )= w_{\pi(x)}.$$

In Section 2.4 of \cite{C-M-R}, a principal connection $\frakA$ on a $G$-principal bundle $\pi : M \lra M/G$ is shown to induce an isomorphism of vector bundles over $M/G$, $\alpha_\frakA : TM/G \lra T(M/G) \oplus \widetilde{\frakg}$ given by
\[
\alpha_\frakA([v_x]) := T\pi (v_x) \oplus [x,\frakA(v_x)],
\]
where $\widetilde{\frakg} := (M \times \frakg)/G$ is the \emph{adjoint bundle}, with $G$ acting on $M$ by the action defining the $G$-principal bundle and on $\frakg$ by the adjoint action. $\widetilde{\frakg}$ is a vector bundle over $M/G$ with fiber $\frakg$ and projection induced by $\pr_1 : M \times \frakg \lra M$.

We now turn our attention to the discrete setting, where the product manifold $Q \times Q$ replaces the tangent bundle $TQ$. 

Let $G$ be a Lie group that acts on $Q$ freely and properly by a left action $l^Q$, making the quotient map $\pi:Q\lra Q/G$ a principal bundle with structure group $G$. We consider the diagonal action of $G$ on $Q\times Q$ defined by $l^{Q\times Q}_g(q_0,q_1):=(l^Q_g(q_0),l^Q_g(q_1))$.

A Lie group $G$ is a {\it symmetry group} of a FDMS $(Q,L_d,f_d)$ if $L_d$ is $G$-invariant by $l^{Q\times Q}_g$ and the force $f_d : Q \times Q \lra T^*Q$ is $G$-equivariant, considering on $T^*(Q \times Q)$ the cotangent lift of the diagonal action.

To establish a reduced variational principle and study the reduced dynamics, it is convenient to work on a diffeomorphic model of $(Q\times Q )/G$. As in the framework of continuos mechanical systems (see for example \cite{C-M-R}), in general, a reduced system is a dynamical system but not necessarily a mechanical one because it is not always possible to identify the space $(Q\times Q) /G$ with a product  manifold. However, it is worth pointing out that it is possbile to do so in some particular cases.

In \cite{F-T-Z}, the authors construct a model space for $(Q\times Q) /G$ associated to a geometric object on the principal bundle $\pi:Q\lra Q/G$ called affine discrete connection. Briefly, a discrete connection on $\pi:Q\lra Q/G$ consists in choosing a submanifold  of $Q \times Q$  with certain characteristics. Alternatively, this object can be described as a function on $Q\times Q$ with values in $G$ that satisfies certain properties\footnote{Actually, a discrete connection is defined on an open $\mathcal{U} \subset Q \times Q$ called domain of the discete connection, but in this paper we will consider that all discrete connections are defined on $Q\times Q$.}.

By using in addition a connection on the principal bundle, we are able to derive a reduced variational principle as well
as reduced equations of motion that split in horizontal and vertical parts.

Now, we make a brief review of affine discrete connections and some of its properties.

Recall that we can define the {\it discrete vertical bundle} for the $l^Q$ action of $G$ as the submanifold
$\calV^G_d := \{ (q,l_g^Q(q)) \in Q \times Q \mid q \in Q, \ g \in G \}$ and the {\it composition} of vertical and arbitrary elements of $Q \times Q$ is given by $\cdot : \calV^G_d \times_Q (Q \times Q) \lra Q \times Q$ with
\[
(q_0,l_g^Q(q_0)) \cdot (q_0,q_1) := (q_0,l_g^Q(q_1)),
\]
where $\calV^G_d \times_Q (Q \times Q)$ denotes bundle product of $\calV^G_d $ and $Q \times Q$ on $Q$.

In what follows we consider the action of $G$ on itself by conjugation and denote it by $l^G$, that is, $l_g^G(w) := g w g^{-1}$.

\begin{definition}
Let $\gamma : Q \lra G$ be a $G$-equivariant smooth function with respecto to $l^Q$ and $l^G$. An \emph{affine discrete connection} $\calA_d$ with \emph{level} $\gamma$ is a smooth function $\calA_d : Q \times Q \lra G$ satisfying
\begin{enumerate}
\item For all $q_0,q_1 \in Q$, $g_0,g_1 \in G$,
\[
\calA_d(l^Q_{g_0}(q_0) , l^Q_{g_1}(q_1)) = g_1 \calA_d(q_0,q_1) g_0^{-1}.
\]
		
\item $\calA_d(q,l^Q_{\gamma(q)}(q)) = e$, where $e$ is the neuter element of $G$.
\end{enumerate}
\end{definition}

An affine discrete connection $\calA_d$ on a $G$-principal bundle $\pi : Q \lra Q/G$ defines a \emph{horizontal submanifold} $\Hor_{\calA_d} \subs Q \times Q$ and a \emph{discrete horizontal lift} $h_d : Q \times Q/G \lra Q \times Q$ given by
\[
h_d^{q_0} (\tau_1) := (q_0,q_1) \ \text{si} \ (q_0,q_1) \in \Hor_{\calA_d} \ \text{y} \ \pi(q_1) = \tau_1.
\]

In this case, $\Hor_{\calA_d} = \calA_d^{-1}(\{ e \})$, where $e \in G$ is the neuter of the group. Finally, using the discrete horizontal lift, we define $\overline{h_d^{q_0}}(\tau_1) := q_1$, where $q_1$ is the element satisfying $h_d^{q_0}(\tau_1) = (q_0,q_1)$.

In complete analogy to the continuous case, an affine discrete connection is equivalently defined via the function $\calA_d$, the horizontal submanifold or the discrete horizontal lift.

Following the ideas found in Section 4.2. of \cite{F-T-Z}, just as a principal connection allows us to identify the quotient $(TM)/G$ with a different model, a discrete connection $\calA_d$ provides an isomorphism of fiber bundles over $Q/G$, $\Phi_{\calA_d} : (Q \times Q)/G \lra \widetilde{G} \times Q/G$, where $\widetilde{G} := (Q \times G)/G$ is the \emph{conjugate associated bundle} (in which $G$ acts on $Q$ and $G$ via $l^Q$ and $l^G$, respectively) and $\Phi_{\calA_d}$ is defined dropping to the quotient the $G$-equivariant map $\widetilde{\Phi}_{\calA_d} : Q \times Q  \lra Q \times G \times (Q/G)$ given by
\[
\widetilde{\Phi}_{\calA_d}(q_0,q_1) := (q_0,\calA_d(q_0,q_1),\pi(q_1)).
\]

Its inverse $\widetilde{\Psi}_{\calA_d} : Q \times G \times (Q/G) \lra Q \times Q$, is defined by
\[
\widetilde{\Psi}_{\calA_d}(q_0,w_0,\tau_1) = (q_0,\widetilde{F}_1(q_0,w_0,\tau_1)),
\]
where the function $\widetilde{F}_1 : Q \times G \times (Q/G) \lra G$ is given by
\[
\widetilde{F}_1(q_0,w_0,\tau_1) := l_{w_0}^Q \left( h_d^{q_0}(\tau_1) \right).
\]

If we call the quotient maps $\rho : Q \times G \lra \widetilde{G}$ and $\widetilde{\pi} : Q \times Q \lra (Q \times Q)/G$ and we define $\Upsilon : Q \times Q \lra \widetilde{G} \times Q/G$ as $\Phi_{\calA_d} \circ \widetilde{\pi}$, we have the following commutative diagram:
\begin{equation}\label{1-eq:red-spaces}
	\xymatrixcolsep{5pc}\xymatrixrowsep{4pc}\xymatrix{
		Q \times Q \ar[d]_{\widetilde{\pi}}
		\ar[r]^{\widetilde{\Phi}_{\calA_d}} \ar[dr]_{\Upsilon} & Q \times G \times (Q/G) \ar[d]^{\rho \times 1_{Q/G}} \\
		(Q\times Q)/G \ar[r]_{\Phi_{\calA_d}} & \widetilde{G} \times Q/G
	}
\end{equation}

We summarize the isomorphisms we will use in a single statement.

\begin{theorem}[Lemma 2.4.2 of \cite{C-M-R} and Proposition 4.19 of \cite{F-T-Z}]
Let $\calA_d$ be an affine discrete connection on $\pi : Q \lra Q/G$ and let $\widetilde{\frakA}$ be a principal connection on $\widetilde{\pi} : Q \times Q \lra (Q \times Q)/G$. There exist isomorphisms, in the corresponding categories,
\[
T(Q \times Q)/G \simeq T \left( (Q \times Q)/G \right) \oplus \widetilde{\frakg}, \quad (Q \times Q)/G \simeq \widetilde{G} \times Q/G.
\]
\end{theorem}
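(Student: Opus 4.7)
The statement packages two separate isomorphisms, each imported from a different reference, so my plan is to address them independently and then note that the combined statement is immediate.

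For the first isomorphism, I would simply apply the continuous result (Lemma 2.4.2 of~\cite{C-M-R}) to the principal bundle $\widetilde{\pi}:Q\times Q\lra (Q\times Q)/G$ equipped with the principal connection $\widetilde{\frakA}$. Writing $\widetilde{\calV}$ for the vertical bundle and $\Hor_{\widetilde{\frakA}}$ for the horizontal distribution determined by $\widetilde{\frakA}$, the explicit map is
\[
\alpha_{\widetilde{\frakA}}([v_{(q_0,q_1)}]) := T\widetilde{\pi}(v_{(q_0,q_1)}) \oplus [(q_0,q_1),\widetilde{\frakA}(v_{(q_0,q_1)})],
\]
and no additional work beyond invoking the cited lemma is required, because $Q\times Q$ with the diagonal action really is a principal $G$-bundle and $\widetilde{\frakA}$ really is a principal connection on it.

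For the second isomorphism, I would work directly with the maps $\widetilde{\Phi}_{\calA_d}$ and $\widetilde{\Psi}_{\calA_d}$ already introduced in the text. The plan is threefold. First, check $G$-equivariance of $\widetilde{\Phi}_{\calA_d}$: with the diagonal action $l^{Q\times Q}$ on the source and the action $g\cdot(q_0,w_0,\tau_1):=(l^Q_g(q_0),l^G_g(w_0),\tau_1)$ on the target, equivariance is an immediate consequence of the identity $\calA_d(l^Q_g(q_0),l^Q_g(q_1))=g\,\calA_d(q_0,q_1)\,g^{-1}$ and the $G$-invariance of $\pi$; this lets the map descend to a well-defined $\Phi_{\calA_d}$ making diagram~\eqref{1-eq:red-spaces} commute. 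Second, verify that $\widetilde{\Phi}_{\calA_d}\circ\widetilde{\Psi}_{\calA_d}$ is the identity on $Q\times G\times(Q/G)$; here the key computation is
\[
\calA_d\bigl(q_0,\,l^Q_{w_0}(\overline{h_d^{q_0}}(\tau_1))\bigr)=w_0\,\calA_d(q_0,\overline{h_d^{q_0}}(\tau_1))\,e^{-1}=w_0\cdot e=w_0,
\]
using equivariance and the horizontality $\calA_d(q_0,\overline{h_d^{q_0}}(\tau_1))=e$. Third, verify $\widetilde{\Psi}_{\calA_d}\circ\widetilde{\Phi}_{\calA_d}=\mathrm{id}_{Q\times Q}$: given $(q_0,q_1)$, set $w_0:=\calA_d(q_0,q_1)$ and $\tau_1:=\pi(q_1)$; then $(q_0,l^Q_{w_0^{-1}}(q_1))$ is horizontal, so it equals $h_d^{q_0}(\tau_1)$, whence $\widetilde{F}_1(q_0,w_0,\tau_1)=l^Q_{w_0}(\overline{h_d^{q_0}}(\tau_1))=q_1$, as required. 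Smoothness of both maps is transparent from their definitions.

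The main obstacle I expect is bookkeeping rather than conceptual depth: one must keep straight which $G$-action acts on which factor in each of the three objects $Q$, $G$ (by conjugation), and $Q/G$ (trivially), and translate the defining property $\calA_d(q,l^Q_{\gamma(q)}(q))=e$ and the equivariance axiom of $\calA_d$ into the statements needed above. Once the equivariance and the two mutual-inverse computations are carried out, the combined statement follows formally, since each individual isomorphism is built from independent data ($\widetilde{\frakA}$ for the first, $\calA_d$ for the second).
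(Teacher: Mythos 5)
Your proposal is correct and follows exactly the route the paper takes: the first isomorphism is the cited continuous result of \cite{C-M-R} applied verbatim to the principal bundle $\widetilde{\pi}:Q\times Q\lra (Q\times Q)/G$, and the second is realized by the maps $\widetilde{\Phi}_{\calA_d}$, $\widetilde{\Psi}_{\calA_d}$ already set up in the text, whose equivariance and mutual-inverse properties you verify correctly from the two defining axioms of $\calA_d$. The paper delegates these verifications to the references rather than writing them out, so your proposal simply supplies the routine details of the same argument.
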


\begin{remark}\label{1-remark:Upsilon-bundle}
Since $\widetilde{\pi}$ is a $G$-principal bundle and $\Phi_{\calA_d}$ is a diffeomorphism, it is clear that $\Upsilon : Q \times Q \lra \widetilde{G} \times Q/G$ defines a $G$-principal bundle.
\end{remark}

If $G$ is a symmetry group of a FDMS $(Q,L_d,f_d)$, using the fact that $\widetilde{\Phi}_{\calA_d}$ is a diffeomorphism, we can define $\check{L}_d : Q \times G \times (Q/G) \lra \rr$ by $\check{L}_d := L_d \circ (\widetilde{\Phi}_{\calA_d})^{-1}$. Since $L_d$ is $G$-invariant, so is $\check{L}_d$, inducing a reduced Lagrangian $\hat{L}_d$ on $\widetilde{G} \times (Q/G)$ given by
$$\hat{L}_d(\rho(q_0,w_0),\tau_1) := \check{L}_d(q_0,w_0,\tau_1).$$

The following diagram shows all the relevant maps introduced so far.
\[
\xymatrixcolsep{5pc}\xymatrixrowsep{4pc}\xymatrix{
{} & {} & {\R} \\ {Q\times Q} \ar[urr]^{L_d} \ar[d]_{\ti{\pi}}
\ar[r]_{\ti{\Phi}_{\CD}} \ar[dr]_{\Upsilon} & {Q\times \SG\times (Q/\SG)}
\ar[ur]^(.3){\check{L}_d} \ar[d]^{\rho\times id} & {}\\ {(Q\times Q)/\SG}
\ar[r]_{\Phi_{\CD}} & {\RS} \ar@/_/[uur]_{\hat{L}_d} & {}}
\]

Finally, we can establish a reduction process for the $G$-symmetry of the system $(Q,L_d)$ by considering the reduction process for FDMS proved in \cite{C-F-T-Z} for the particular case $f_d \equiv 0$. The following theorem is a discrete version of the one found in \cite{C-M-R}.

\begin{theorem} \label{th:4_points-general}
Let $\calA_d$ and $\frakA$ be an affine discrete connection and a principal connection, respectively, on the $G$-principal bundle $\pi : Q \lra Q/G$. Let $q_\cdot$ be a discrete curve in $Q$ and let
\begin{equation*}
\begin{split}
\tau_k &:= \pi(q_k), \quad 1 \le k \le N, \\
w_k &:= \calA_d(q_k,q_{k+1}), \quad v_k := \rho(q_k,w_k), \quad 0 \le k \le N-1
\end{split}
\end{equation*}
be the corresponding discrete curves in $Q/\SG$, $\SG$ and $\ti{\SG}$. Then, given a discrete mechanical system $(Q,L_d)$ with symmetry group $\SG$, the following statements are equivalent.
\begin{enumerate}
\item \label{it:var_pple-general} $q_\cdot$ satisfies the variational principle
$dS_d(q_\cdot)(\delta q_\cdot) = 0$ for all vanishing end points infinitesimal variations $\delta q_\cdot$ over $q_\cdot$.

  \item \label{it:eq_lda-general} $q_\cdot$ satisfies the
    discrete Euler-Lagrange equations
    \begin{equation}
    	\label{eq:DEL}
    	D_2 L_d(q_{k-1},q_k) + D_1 L_d(q_k,q_{k+1}) = 0, \quad 1 \le k \le N-1.
    \end{equation}

  \item \label{it:red_var_pple-general}  $(\tau_\cdot,v_\cdot) $ satisfies 
  \[ \delta \left( \sum_{k=0}^{N-1} \hat{L}_d(v_k,\tau_{k+1}) \right) = 0 \]
  for all infinitesimal variations $(\delta v_\cdot, \delta \tau_\cdot)$ over $(\tau_\cdot,v_\cdot)$ that satisfy
  \begin{itemize}
  	\item [a)] $\delta \tau_k \in T_{\tau_k}(Q/G)$ for $k=1,\dots,N$,
  	\item [b)] $\xi. = (\xi_0,\ldots,\xi_N) \in \frakg^{N+1}$,
  	\item [c)] for $k=0,\dots,N-1$,
  	\begin{equation} \label{eq:delta_vk-def}
  		\begin{split}
  			\delta v_k :=& \; T_{(q_k,w_k)}\rho\big(\HLc{q_k}(\delta \tau_k),
  			T_{(q_k,q_{k+1})}\CD(\HLc{q_k}(\delta \tau_k),\HLc{q_{k+1}}(\delta
  			\tau_{k+1}))\big)
  			\\
  			& + T_{(q_k,w_k)}\rho\big((\xi_k)_Q(q_k),
  			T_{(q_k,q_{k+1})}\CD((\xi_k)_Q(q_k),(\xi_{k+1})_Q(q_{k+1}))\big),
  		\end{split}
  	\end{equation}
   where $h$ is the horizontal lift associated to $\frakA$, $\delta\tau_0 \in T_{\pi(q_0)}(Q/G)$,
   \item [d)] and the fixed endpoints conditions: $\delta\tau_0=0$, $\delta\tau_N=0$.
  \end{itemize}

  \item \label{it:red_lda_eq-general}
    $(v_\cdot,\tau_\cdot)$ satisfies the following conditions for each
    fixed $(v_{k-1},\tau_k,v_k,\tau_{k+1})$, with $1 \leq k \leq N-1$.
    \begin{itemize}
    \item $\phi=0$ for $\phi \in T_{\tau_k}(Q/G)$ given by 
    \[ \begin{split}
    	\phi & := D_1 \check{L}_d(q_k,w_k,\tau_{k+1}) \circ \HLc{q_k}
    	+ D_3 \check{L}_d(q_{k-1},w_{k-1},\tau_{k}) \\ 
    	& + D_2\check{L}_d(q_k,w_k,\tau_{k+1})
    	D_1\calA_d(q_k,q_{k+1}) \circ \HLc{q_k} \\
    	& + D_2\check{L}_d(q_k,w_k,r_{k+1}) D_2\calA_d(q_k,q_{k+1})\circ\HLc{q_k}.
    \end{split} \]
    
    \item $\psi(\xi_k)=0$ for all $\xi_k \in \frakg$, where $\psi \in \frakg^*$ is given by
    \[  \psi(\xi_k):= \left( D_2\check{L}_d(q_{k-1},w_{k-1},\tau_{k}) w_{k-1}^{-1}
    - D_2\check{L}_d(q_k,w_k,\tau_{k+1}) w_k^{-1} \right)(\xi_k) \]
    \end{itemize}
where we use the notation $\alpha w:=(R_{w^{-1}})^*(\alpha)$ for $w \in G$ and $\alpha \in T_q^* Q $, being $R_w$ the right traslation.
\end{enumerate}  
\end{theorem}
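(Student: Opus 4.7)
I would split the fourfold equivalence into three chains: (1)$\Leftrightarrow$(2), (1)$\Leftrightarrow$(3) and (3)$\Leftrightarrow$(4). The first is the classical discrete Hamilton principle, obtained by a standard discrete summation by parts exactly as in the derivation of Theorem \ref{thm:equations} with $f_d\equiv 0$; the fixed-endpoint hypothesis kills the boundary contributions and leaves precisely \eqref{eq:DEL}.

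For (1)$\Leftrightarrow$(3) the essential tool is the commutative diagram \eqref{1-eq:red-spaces} together with the $G$-invariance of $L_d$. Invariance yields
\[
L_d(q_k,q_{k+1}) = \check{L}_d\bigl(\ti{\Phi}_{\CD}(q_k,q_{k+1})\bigr) = \check{L}_d(q_k,w_k,\tau_{k+1}) = \hat{L}_d(v_k,\tau_{k+1}),
\]
so $S_d(q_\cdot)=\sum_{k=0}^{N-1}\hat{L}_d(v_k,\tau_{k+1})$. To transport the variations, I would use the principal connection $\frakA$ to decompose each $\delta q_k\in T_{q_k}Q$ as $\delta q_k = h^{q_k}(\delta\tau_k)+(\xi_k)_Q(q_k)$, with $\delta\tau_k:=T\pi(\delta q_k)$ and $\xi_k:=\frakA(\delta q_k)$. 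Differentiating $v_k=\rho(q_k,\CD(q_k,q_{k+1}))$ in both slots, using the linearity of $T\rho$ and of $T\CD$ and the splitting just introduced, reproduces exactly the formula \eqref{eq:delta_vk-def} for $\delta v_k$. The fixed-endpoint condition $\delta q_0=\delta q_N=0$ forces $\delta\tau_0=\delta\tau_N=0$ on the horizontal side, while the vertical generators $\xi_0,\xi_N$ enter only $\delta v_0$ and $\delta v_{N-1}$; a direct computation, using the $G$-equivariance of $\calA_d$ and the $G$-invariance of $\hat{L}_d$, shows that those boundary $\xi$-contributions cancel in $d\hat{S}_d$, so that $\xi_0$ and $\xi_N$ may be taken arbitrary in (3) without changing the stationarity content. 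Hence $dS_d(q_\cdot)(\delta q_\cdot)=d\hat{S}_d(v_\cdot,\tau_\cdot)(\delta v_\cdot,\delta\tau_\cdot)$ on matched data.

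For (3)$\Leftrightarrow$(4) I would expand
\[
d\hat{S}_d(\delta v_\cdot,\delta\tau_\cdot) = \sum_{k=0}^{N-1}\bigl[D_1\hat{L}_d(v_k,\tau_{k+1})\,\delta v_k + D_2\hat{L}_d(v_k,\tau_{k+1})\,\delta\tau_{k+1}\bigr],
\]
substitute \eqref{eq:delta_vk-def} for $\delta v_k$, and regroup by the independent free parameters. Variations in which $\xi_\cdot\equiv 0$ and only one $\delta\tau_k$ (with $1\le k\le N-1$) is nonzero produce the coefficient $\phi$ of (4): the term $D_3\check{L}_d(q_{k-1},w_{k-1},\tau_k)$ comes from the explicit $\tau_{k+1}$-dependence shifted by one step, and the two $D_2\check{L}_d\cdot D_i\calA_d\circ h^{q_k}$ terms come from differentiating $\calA_d(q_k,q_{k+1})$ in its two slots. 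Variations in which $\delta\tau_\cdot\equiv 0$ and only one $\xi_k$ is nonzero produce $\psi(\xi_k)$ via the identity expressing $T\rho$ in terms of right translation, giving the difference $D_2\check{L}_d(q_{k-1},w_{k-1},\tau_k)w_{k-1}^{-1} - D_2\check{L}_d(q_k,w_k,\tau_{k+1})w_k^{-1}$. Arbitrariness of $\delta\tau_k$ and $\xi_k$ then yields the two equations in (4).

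The main technical obstacle is the careful computation of $T\Upsilon$ that produces \eqref{eq:delta_vk-def} together with the correct boundary-term accounting for the vertical directions at $k=0$ and $k=N$. As signaled in the paragraph preceding the statement, this work has already been carried out for general FDMS in \cite{C-F-T-Z}, so the cleanest route is to invoke that reduction theorem and specialize it to $f_d\equiv 0$; the four conditions above are then exactly its output. The direct self-contained proof sketched here serves as a consistency check of that specialization.
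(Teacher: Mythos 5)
Your proposal ends exactly where the paper does: the paper's entire proof of this theorem is ``See Theorem 3.11 in \cite{C-F-T-Z} with $f_d \equiv 0$,'' which is precisely the specialization you identify as the cleanest route, so on the main line you and the paper agree. The supplementary direct sketch follows the standard Lagrange--Poincar\'e-type strategy (decompose $\delta q_k = \HLc{q_k}(\delta\tau_k) + (\xi_k)_Q(q_k)$, push through $\widetilde{\Phi}_{\calA_d}$ and $\rho$, regroup by the free parameters) and is broadly the right argument, but one step of it is wrong as stated: the claim that the boundary contributions of $\xi_0$ and $\xi_N$ ``cancel'' in the differential of the reduced action $\sum_k \hat{L}_d(v_k,\tau_{k+1})$. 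Carrying out the vertical part of the computation, the second term of \eqref{eq:delta_vk-def} reduces, modulo the kernel of $T\rho$, to $T\rho\big(0,(\xi_{k+1}-\xi_k)w_k\big)$, so the vertical contribution to the differential of the reduced action is $\sum_{k=0}^{N-1}\big(D_2\check{L}_d(q_k,w_k,\tau_{k+1})w_k^{-1}\big)(\xi_{k+1}-\xi_k)$; regrouping, the coefficient of $\xi_0$ is $-D_2\check{L}_d(q_0,w_0,\tau_1)w_0^{-1}$, which pairs with $\eta\in\frakg$ as $-J_d(q_0,q_1)\eta$ and is nonzero in general (similarly the coefficient of $\xi_N$ is $J_d(q_{N-1},q_N)$ transported by $w_{N-1}$). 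Hence allowing arbitrary $\xi_0,\xi_N$ in item (3) would impose the spurious extra conditions $J_d(q_0,q_1)=0$ and $J_d(q_{N-1},q_N)=0$ rather than nothing. The resolution is not a cancellation but an exclusion: the fixed-endpoint hypothesis $\delta q_0=\delta q_N=0$ in item (1) kills both the horizontal and the vertical components of the variation at the endpoints, so the class of variations in item (3) must be understood with $\xi_0=\xi_N=0$ as well. With that correction your regrouping argument for (3)$\Leftrightarrow$(4) --- isolating $\phi$ from the $\delta\tau_k$-variations and $\psi$ from the $\xi_k$-variations for $1\le k\le N-1$ --- goes through as described and reproduces the two reduced equations.
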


\begin{proof} 
See Theorem 3.11 in \cite{C-F-T-Z} with $f_d \equiv 0$.
\end{proof}

\begin{remark}\label{map Y}
Suppose that the characteristics of the system indicate that some of its trajectories (depending on initial data) live in a submanifold of $Q\times Q$  
that defines an affine discrete connection $\CD$ on the fiber bundle $\pi:Q \lra Q/G$. That is, some of the trajectories of the system belong to the horizontal space of some affine discrete connection $\CD$. In this case applying the reduction procedure using this connection yields a system on a manifold such that the reduced horizontal space is diffeomorphic to a product manifold $Q/G \times Q/G$. Thus, it is obtained that part of the reduced system is actually a discrete mechanical system.

In \cite{F-T-Z}, Chaplygin and horizontal symmetries were analyzed as a case of symmetries of nonholonomic discrete mechanical systems where this occurs. 
In Section \ref{sec:routh} we will consider the so called discrete Routh reduction as a particularly interesting case in which it also happens.
\end{remark}

\subsection{Discrete momentum map and connections}
\label{sec:SDMC}

We begin this section by recalling the definition of the discrete momentum map and some of its fundamental properties.

\begin{definition}
Given a symmetry group $G$ of $(Q,L_d)$, its  {\it discrete momentun map} $J_d:Q \times Q \lra  \frak g^*$ associated is defined as 
$$ J_d(q_0,q_1)\xi :=D_2 L_d(q_0,q_1)\xi_Q(q_1) = -D_1 L_d(q_0,q_1)\xi_Q(q_0)$$
where $\xi_Q$ is the infinitesimal generator of $\xi \in \frakg:=\text{Lie}(G)$.
\end{definition}

\begin{proposition}\label{momentun map}
Let $G$ be a group symmetry of $(Q,L_d)$ and $J_d$ its discrete momentun map associated. Then, 
\begin{enumerate}
\item $J_d$ is constant on trajectories $q.$ of $(Q,L_d)$. That is,
\[
J_d(q_{k-1},q_k)=J_d(q_k,q_{k+1})
\]
if $q.$ is a trajectory of the system.
\item $J_d$ is $G$-equivariant with respect to the diagonal action and the coadjoint action $Ad^{*}$ on  $\frak g^*$. That is, $ J_d(l_g^{Q\times Q}(q_0,q_1))= Ad^{*}_{g^{-1}} (J_d(q_0, q_1))$ for all $g \in G$ and all $(q_0,q_1) \in Q \times Q$.
\item  If $L_d$ is regular and $\mu \in \frak g^* $ is a regular value of $J_d$, the subset  $J_d^{-1}(\mu)\subset Q\times Q$ is a regular submanifold of $Q\times Q$.
\end{enumerate}
\end{proposition}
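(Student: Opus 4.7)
First I would verify that the two expressions on the right hand side of the definition of $J_d$ agree. This follows from the infinitesimal $G$--invariance of $L_d$: differentiating the identity $L_d(l^Q_{\exp(t\xi)}(q_0),l^Q_{\exp(t\xi)}(q_1))=L_d(q_0,q_1)$ at $t=0$ gives
\[
D_1 L_d(q_0,q_1)\xi_Q(q_0) + D_2 L_d(q_0,q_1)\xi_Q(q_1) = 0,
\]
so $D_2L_d(q_0,q_1)\xi_Q(q_1)=-D_1L_d(q_0,q_1)\xi_Q(q_0)$.

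For part (1), I would contract the discrete Euler--Lagrange equations \eqref{forcedELe} (with $f_d\equiv 0$) at step $k$ with the tangent vector $\xi_Q(q_k)\in T_{q_k}Q$:
\[
D_2 L_d(q_{k-1},q_k)\xi_Q(q_k) + D_1 L_d(q_k,q_{k+1})\xi_Q(q_k) = 0.
\]
The first term is $J_d(q_{k-1},q_k)\xi$ by the $D_2$--form of the definition, and the second term is $-J_d(q_k,q_{k+1})\xi$ by the $D_1$--form. This gives the conservation identity for every $\xi\in\frakg$, hence $J_d(q_{k-1},q_k)=J_d(q_k,q_{k+1})$.

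For part (2), I would use the standard identity for infinitesimal generators under a left action, $\xi_Q(l^Q_g(q))=T_q l^Q_g\big((\Ad_{g^{-1}}\xi)_Q(q)\big)$, obtained by differentiating $l^Q_{\exp(t\xi)g}=l^Q_g\circ l^Q_{g^{-1}\exp(t\xi)g}$ at $t=0$. Then $G$--invariance of $L_d$, differentiated in the second slot, yields $D_2L_d(l^Q_g(q_0),l^Q_g(q_1))\circ T_{q_1}l^Q_g=D_2L_d(q_0,q_1)$. Combining,
\[
J_d(l^{Q\times Q}_g(q_0,q_1))\xi = D_2 L_d(q_0,q_1)(\Ad_{g^{-1}}\xi)_Q(q_1) = J_d(q_0,q_1)(\Ad_{g^{-1}}\xi),
\]
which is precisely $(\Ad^*_{g^{-1}}J_d(q_0,q_1))(\xi)$.

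For part (3), I would simply invoke the regular value theorem: if $\mu\in\frakg^*$ is a regular value of the smooth map $J_d:Q\times Q\to\frakg^*$, then $J_d^{-1}(\{\mu\})$ is a closed embedded submanifold of $Q\times Q$ of codimension $\dim\frakg$. Regularity of $L_d$ is used implicitly to guarantee the smoothness and nondegeneracy framework in which $J_d$ is considered.

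I expect no serious obstacle in this proof; the only place requiring care is (2), where one must track the appearance of $\Ad_{g^{-1}}$ (rather than $\Ad_g$) coming from the left--action sign convention in the identity for $\xi_Q\circ l^Q_g$.
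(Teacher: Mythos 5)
Your proof is correct; the paper itself offers no argument for this proposition, only citations to \cite{M-West} (parts 1 and 2) and \cite{F-T-Z} (part 3), and the arguments you give are precisely the standard ones found there: infinitesimal invariance for well-definedness, pairing the discrete Euler--Lagrange equations with $\xi_Q(q_k)$ for conservation, the identity $\xi_Q\circ l^Q_g = Tl^Q_g\circ(\Ad_{g^{-1}}\xi)_Q$ for equivariance, and the regular value theorem for part 3. The only remark worth making is that in part 3 the hypothesis that $\mu$ is a regular value already makes the preimage theorem apply directly, so the regularity of $L_d$ is not needed for that step as you state it; in the cited reference it enters in showing that suitable $\mu$ are in fact regular values.
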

\begin{proof}
For the first two claims see \cite{M-West}, for the last one, see Lemmas 11.2, 11.4 and Proposition 11.8 in  \cite{F-T-Z}.
\end{proof}

Let $q.$ be a trajectory of $(Q,L_d)$ such that $J_d(q_0,q_1)=\mu$. Then, all the points of the trajectory must also lie on the submanifold $J_d^{-1}(\mu)$. Now we will analize if this submanifold can be the horizontal space of an affine discrete connection $\calA_d$ on the fiber bundle $\pi:Q\lra Q/G$. For this,  among other things, $J_{d}^{-1}(\mu)$ must be $G$-invariant.

By Proposition \ref{momentun map} we know that $J_d$ is $G$-equivariant respect to the diagonal action and the coadjoint action on  $\frak g^*$. Then, given $(q_0,q_1) \in J_{d}^{-1}(\mu)$, it is clear that
$l_g^{Q\times Q}(q_0,q_1) \in J_{d}^{-1}(\mu)$ if and only if $g \in G_\mu :=\{g\in  G:Ad^*_{g^{-1}}(\mu) = \mu\}\subs G$ is the isotropy subgroup of $\mu$.

Thus, $J_d^{-1}(\mu)$ is not $G$-invariant but it is $G_\mu$-invariant. 
Since $G_\mu$ is a symmetry group of  $(Q,L_d)$ that  acts freely and properly on $Q$, one can consider the quotient manifold $Q/G_\mu$, the fiber bundle  $\pi_\mu : Q \lra Q/G_\mu$ and the discrete momentun map associated to this symmetry $J_\mu : Q \times Q \lra \frakg_\mu^*$ given by $ J_\mu(q_0,q_1)\xi := -D_1 L_d(q_0,q_1)\xi_Q(q_0)$, for $ \xi \in \frakg_\mu$ where $\frakg_\mu$ is the Lie algebra of $G_\mu$. 

Consequently, we will see, under some additional assumption, that $J_\mu^{-1}(\mu)$ defines an affine discrete connection $\calA_\mu$ on the fiber bundle $\pi_\mu : Q \lra Q/G_\mu$. 

We recall that if $G$ is a symmetry group of $(Q,L_d)$, we say that $L_d$ is {\it $G$-regular} at $(q_0,q_1)\in Q\times Q$ if the restriction of the bilinear form $D_2 D_1 L_d (q_0,q_1): T_{q_0}Q\times T_{q_1}Q \lra \rr$ to $\mathcal{V}^G_{q_0}\times \mathcal{V}^G_{q_1}$ is nondegenerate (see \cite{J-L-M-W} and \cite{F-T-Z}).

\begin{definition}
Let $G_\mu$ be a symmetry group of $(Q,L_d)$ with $L_d$ regular and $G_\mu$-regular. We say that $G_\mu$ is a {\it group of	$\mu$-good symmetries} if, in addition, for each $q\in Q$ there is a unique $g\in G_\mu$ such that $ J_\mu(q,l^Q_g(q)) = \mu$. 
\end{definition}

\begin{proposition}\label{prop:mu_good_imp_aff_conn}
Let $G_\mu$ be a group of $\mu$-good symmetries of $(Q,L_d)$. Then, for all $(q_0,q_1)\in Q \times Q$ exists a unique $g\in G_\mu$ such that $J_\mu(q_0,l^Q_{g^{-1}}(q_1))=\mu$.  
\end{proposition}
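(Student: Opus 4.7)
The plan is to study, for fixed $(q_0,q_1) \in Q\times Q$, the map $\Phi_{q_0,q_1} : G_\mu \to \frakg_\mu^*$ defined by
\[ \Phi_{q_0,q_1}(g) := J_\mu(q_0, l^Q_{g^{-1}}(q_1)), \]
and show that $\Phi_{q_0,q_1}^{-1}(\mu)$ consists of exactly one element. The $\mu$-good hypothesis provides the ``initial datum'' on the diagonal $q_0 = q_1$, while the $G_\mu$-regularity of $L_d$ will give the local rigidity needed to propagate it to general $(q_0,q_1)$.

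First I would compute the differential of $\Phi_{q_0,q_1}$ at $g\in G_\mu$. Writing $g(t) = g\exp(t\eta)$ with $\eta\in\frakg_\mu$ and using the formula $J_\mu(q_0,q)\cdot \xi = -D_1 L_d(q_0,q)\cdot \xi_Q(q_0)$ for $\xi\in\frakg_\mu$, a direct chain-rule calculation gives
\[ \bigl(T_g\Phi_{q_0,q_1}(\eta)\bigr)(\xi) \;=\; D_2 D_1 L_d\bigl(q_0, l^Q_{g^{-1}}(q_1)\bigr)\bigl(\eta_Q(l^Q_{g^{-1}}(q_1)),\, \xi_Q(q_0)\bigr), \]
up to sign. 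Since the map $\xi \mapsto \xi_Q$ identifies $\frakg_\mu$ with the vertical spaces (the action is free), the hypothesis that $L_d$ is $G_\mu$-regular forces this bilinear pairing to be nondegenerate, so $T_g\Phi_{q_0,q_1}$ is an isomorphism $\frakg_\mu\to\frakg_\mu^*$ at every $g$. In particular, $\Phi_{q_0,q_1}$ is a local diffeomorphism for every choice of $(q_0,q_1)$, and its preimage of any point is discrete.

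For existence, the $\mu$-good hypothesis applied to $q_0$ yields a unique $g_0\in G_\mu$ with $J_\mu(q_0,l^Q_{g_0}(q_0))=\mu$, i.e.\ $\Phi_{q_0,q_0}(g_0^{-1})=\mu$. Picking a smooth path $c:[0,1]\to Q$ from $c(0)=q_0$ to $c(1)=q_1$ and using the implicit function theorem applied to $\Psi(t,g) := J_\mu(q_0,l^Q_{g^{-1}}(c(t)))$ (which has invertible $\partial_g$-derivative by Step~1), I lift $c$ to a smooth curve $g(t)\in G_\mu$ with $g(0)=g_0^{-1}$ and $\Psi(t,g(t))=\mu$ for all $t$; then $g:=g(1)$ does the job. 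For uniqueness, an analogous continuation argument shows that any $g\in G_\mu$ solving $\Phi_{q_0,q_1}(g)=\mu$ can be tracked backwards along $c$ to a solution of $\Phi_{q_0,q_0}(\cdot)=\mu$, which by $\mu$-good must equal $g_0^{-1}$; the inverse lift being unique (local diffeomorphism plus discrete fibre) forces $g$ to coincide with the $g(1)$ produced above. The main obstacle is the global extension of the lift $g(t)$ to all of $[0,1]$, ensuring it does not escape $G_\mu$; this is where the freeness and properness of the $G$-action, together with the implicit assumption in the paper that the affine discrete connection $\calA_\mu$ is globally defined on $Q\times Q$, intervene.
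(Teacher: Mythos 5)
The paper does not actually prove this proposition; it defers entirely to Proposition~11.11 of \cite{F-T-Z}. So your attempt has to stand on its own, and while the outline is sensible, it has a genuine gap. The infinitesimal part is fine: differentiating $\Phi_{q_0,q_1}(g)=J_\mu(q_0,l^Q_{g^{-1}}(q_1))$ along $g\exp(t\eta)$ does produce the pairing $(\eta,\xi)\mapsto D_2D_1L_d(q_0,l^Q_{g^{-1}}(q_1))(\xi_Q(q_0),\eta_Q(l^Q_{g^{-1}}(q_1)))$ up to sign, and since the action is free, $\xi\mapsto\xi_Q$ identifies $\frakg_\mu$ with the vertical spaces, so $G_\mu$-regularity makes $T_g\Phi_{q_0,q_1}$ an isomorphism. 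The problem is the global continuation step. A map that is everywhere a local diffeomorphism with discrete fibres does \emph{not} admit global path lifting in general: the implicit function theorem gives a lift $g(t)$ on a maximal interval $[0,t^\ast)$, and nothing you have said rules out $t^\ast\le 1$ with $g(t)$ leaving every compact subset of $G_\mu$ as $t\to t^\ast$. Freeness and properness of the action on $Q$ do not obviously translate into properness of $\Phi_{q_0,q_1}$ (or of the map $(t,g)\mapsto\Psi(t,g)$), which is what you would need to close this. Worse, your appeal to ``the implicit assumption in the paper that $\calA_\mu$ is globally defined on $Q\times Q$'' is circular: this proposition is precisely the statement that legitimizes defining $\calA_\mu$ on all of $Q\times Q$, so it cannot be used as an input.

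Two smaller but real issues in the same step: your construction requires a smooth path $c$ from $q_0$ to $q_1$, hence path-connectedness of $Q$, which is not among the hypotheses of the proposition (connectedness is only assumed much later in the paper, in the symplectic reduction section); and the backward-tracking uniqueness argument inherits the same global-lifting problem, since an arbitrary solution $g$ of $\Phi_{q_0,q_1}(g)=\mu$ must first be shown to lie on a lift that extends all the way back to $t=0$. To make the proof complete you would need either an explicit properness/covering argument for $\Phi_{q_0,q_1}$ (for instance, showing that $\Phi_{q_0,q_1}$ is a proper map onto a suitable open set containing $\mu$), or a different, non-homotopy-theoretic argument such as the one in Proposition~11.11 of \cite{F-T-Z}; as written, existence and uniqueness for general $(q_0,q_1)$ are not established.
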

\begin{proof}
See Proposition 11.11. \cite{F-T-Z}.
\end{proof}

Now, we can define the affine discrete connection associated to $ J_\mu$.

\begin{definition}
Given $\mu\in \frak g^*$, if  $G_\mu$ be a group of $\mu$-good symmetries of $(Q,L_d)$, let us define the map $\calA_\mu:Q\times Q\lra G_\mu$ as $\calA_\mu(q_0,q_1):=g$ where $g$ is the element of $G_\mu$ that appears in above proposition. Thus,
\[ \calA_\mu(q_0,q_1)=e   \Leftrightarrow  J_\mu(q_0,q_1)=\mu.  \]

This map defines an affine discrete connection (see Proposition 11.11 of \cite{F-T-Z}) and is called {\it affine discrete connection associated to the momentum map $ J_\mu$} and its horizontal space $Hor_{\calA_\mu}$ is $J_\mu^{-1}(\mu)$.
\end{definition}

\begin{example}\label{example}
Consider the discrete mechanical system $(Q,L_d)$ that arises as a discretization of the mechanical system given by a bar lying on a horizontal plane, where $Q := S^1 \times \rr^2 $ and the discrete Lagrangian is given by
$$L_d(q_0,q_1) := \frac{m}{2h} \left[ (x_1 - x_0)^2 + (y_1 - y_0)^2 \right] + \frac{J}{2h} (\varphi_1 - \varphi_0)^2,$$
where $m$ is the mass of the bar, $J$ is the moment of inertia and $h>0$ is a fixed time--step.
	
Since the iterated derivative $D_2 D_1 L_d(q_0,q_1)$ is given by
$$\frac{\partial^2 L_d}{\partial q_1 \partial q_0}(q_0,q_1) = -\diag \left( \frac{m}{h} , \frac{m}{h} , \frac{J}{h} \right),$$
and $L_d$ is regular, the symplectic form $\omega_{L_d}$ defined in \eqref{eq:omegaLd} is
\begin{eqnarray*}
&& \omega_{L_d}(q_0,q_1)((\delta q_0 , \delta q_1) , (\dot{q}_0 , \dot{q}_1)) = \\
&& \ \ \ \frac{m}{h} (\delta x_0 \dot{x}_1 - \dot{x}_0 \delta x_1) + \frac{m}{h} (\delta y_0 \dot{y}_1 - \dot{y}_0 \delta y_1) + \frac{J}{h} (\delta \varphi_0 \dot{\varphi}_1 - \dot{\varphi}_0 \delta \varphi_1),
\end{eqnarray*}
for $((\delta q_0 , \delta q_1) , (\dot{q}_0 , \dot{q}_1)) \in T_{(q_0,q_1)}(Q \times Q)$.
	
Consider the group $G = \text{SE}(2)$, which may be identified with $S^1 \times \rr^2$, with the operation defined as
$$(\alpha,a,b) \cdot (\beta,a',b') := (\alpha + \beta, a' \cos\alpha - b' \sin\alpha + a, a' \sin\alpha + b' \cos\alpha + b).$$
	
If $g = (\alpha,a,b)$ and $q = (\varphi,x,y)$, this group $G$ acts on $Q$ by left multiplication $l : G \times Q \lra Q$. Then,
$$l_g(q) := ( \varphi + \alpha, x \cos\alpha - y \sin\alpha + a , x \sin\alpha + y \cos\alpha + b ).$$

$\ast$ $L_d$ is $G$-invariant.

Notice that $G$ is a non abelian symmetry group and the discrete Lagrangian is $G$-invariant since
\[ L_d(q_0,q_1)  = \frac{m}{2h} \| (x_1-x_0,y_1-y_0) \|^2 + \frac{J}{2h} (\varphi_1 - \varphi_0)^2 \]
and $SE(2)$-invariance is equivalent to translation/rotation invariance. Also, the invariance of the angular term is direct.
	
A direct calculation shows that  $(l^{Q \times Q}_g)^{*} \omega_{L_d}=\omega_{L_d}$ for each $g \in G$.

$\ast$ The isotropy subgroup $G_\mu$.

Let us compute the isotropy subgroup $G_\mu$, for some $\mu \in \frakg^*$. Given $g = (\alpha,a,b) \in S^1 \times \rr^2$, we have
$$g^{-1} = (-\alpha , -(a \cos\alpha + b \sin\alpha) , a \sin\alpha - b \cos\alpha).$$
	
Recall that the adjoint representation $\Ad : G \lra \text{Aut}(\frakg)$ is given by $\Ad_g \xi := T_e I_g (\xi)$, where $I_g : G \lra G$ is the inner automorphism defined by $I_g(h) := g h g^{-1}$.
	
Let $\gamma(t) = (\alpha(t),a(t),b(t))$ be a curve in $G$ such that $\gamma(0) = (0,0,0)$ and $\dot{\gamma}(0) = \xi = (\xi_1,\xi_2,\xi_3)$. Then, a straightforward computation shows that
\[
\Ad_g \xi = \left( \xi_1 , b \xi_1 + \cos\alpha \xi_2 - \sin\alpha \xi_3 , - a \xi_1 + \sin\alpha \xi_2 + \cos\alpha \xi_3 \right).
\]
	
Therefore,
\[
\begin{split}
\Ad_{g^{-1}} \xi &= \left( \xi_1 , (a \sin\alpha - b \cos\alpha) \xi_1 + \cos\alpha \xi_2 + \sin\alpha \xi_3 , \right. \\
& \ \ \ \ \ \left. (a \cos\alpha + b \sin\alpha) \xi_1 - \sin\alpha \xi_2 + \cos\alpha \xi_3 \right).
\end{split}
\]
	
Taking $\mu = (\mu_1,\mu_2,\mu_3)$, this implies that the coadjoint action on $\frakg^*$ is given by
\[
\begin{split}
\langle \Ad_g^* \mu , \xi \rangle &= \left[ \mu_1 + \mu_2 (a \sin\alpha - b \cos\alpha) + \mu_3 (a \cos \alpha + b \sin\alpha) \right] \xi_1 \\
& + \left[ \mu_2 \cos\alpha - \mu_3 \sin\alpha \right] \xi_2 + \left[ \mu_2 \sin\alpha + \mu_3 \cos\alpha \right] \xi_3.
\end{split}
\]
%\begin{eqnarray*}
%\langle \Ad_g^* \mu , \xi \rangle & = & \mu_1 \xi_1 + \mu_2 \left[ (a \sin\alpha - b \cos\alpha) \xi_1 + \cos\alpha \xi_2 + \sin\alpha \xi_3 \right] \\
%& & + \mu_3 \left[ (a \cos\alpha + b \sin\alpha) \xi_1 - \sin\alpha \xi_2 + \cos\alpha \xi_3 \right] \\
%& = & \left[ \mu_1 + \mu_2 (a \sin\alpha - b \cos\alpha) + \mu_3 (a \cos \alpha + b \sin\alpha) \right] \xi_1 \\
%& & + \left[ \mu_2 \cos\alpha - \mu_3 \sin\alpha \right] \xi_2 + \left[ \mu_2 \sin\alpha + \mu_3 \cos\alpha \right] \xi_3.
%\end{eqnarray*}
	
This yields that $\Ad_g^* \mu = \mu$ if and only if

\[
\left\{
\begin{split}
\mu_1 &= \mu_1 + \mu_2 (a \sin\alpha - b \cos\alpha) + \mu_3 (a \cos\alpha + b \sin\alpha) \\
\mu_2 &= \mu_2 \cos\alpha - \mu_3 \sin\alpha \\
\mu_3 &= \mu_2 \sin\alpha + \mu_3 \cos\alpha
\end{split}
\right.
\]
	
These equations can be written in a more compact way as
\[
\left\{
\begin{split}
& \mu_1 = \mu_1 + \mu_2 (a \sin\alpha - b \cos\alpha) + \mu_3 (a \cos\alpha + b \sin\alpha) \\
& \left(
\begin{array}{c}
\mu_2 \\
\mu_3
\end{array}
\right) = \left(
\begin{array}{cc}
\cos \alpha & -\sin \alpha \\
\sin \alpha & \cos \alpha
\end{array}
\right) \left(
\begin{array}{c}
\mu_2 \\
\mu_3
\end{array}
\right).
\end{split}
\right.
\]
where it worth noticing that the matrix involved is simply a rotation matrix by an angle $\alpha$.

Let us consider some cases:
\begin{itemize}
\item [*] $(\mu_2,\mu_3)=(0,0)$. In this case, $\alpha$ is arbitrary and the first equation holds for every $(a,b)$ as well, yielding $G_\mu=G$.
\item [*]  $(\mu_2,\mu_3) \neq (0,0)$. In this case, the second equation implies that $\alpha=0 \in S^1$ and the first equation reads 
\[ \mu_1=\mu_1-\mu_2 b + \mu_3 a, \]
which says that $(a,b)$ must lie on the plane $-\mu_2 b + \mu_3 a =0$. Therefore, the isotropy subgroup is
\[ G_\mu = \{ (0,a,b) \in G \ | -\mu_2 b + \mu_3 a =0 \}.  \]
\end{itemize}

We will restrict our attention to the case where $\mu_3 = 0$ and $\mu_2 \neq 0$, which means that
$$G_\mu = \{ (0,a,0) \in G \mid a \in \rr \}.$$
	
$\ast$ $J_\mu$ and the affine discrete connection $\calA_\mu$.

Consider the action of $G_\mu$ on $Q$ given by the restriction of $l$ to $G_\mu$, i.e., $l^\mu : G_\mu \times Q \lra Q$ with $l^\mu_g(q) = l_g(q)$. Explicitly, if $g = (0,a,0) \in G_\mu$ and $q = (\varphi,x,y) \in Q$,
$$l_g^\mu(q) = (\varphi , x + a , y).$$

In addition, $\frakg_\mu \simeq \{ (0,\xi_2,0) \mid \xi_2 \in \rr \}$ and $\frakg_\mu^* \simeq \{ (0,\mu',0) \mid \mu' \in \rr \}$.
	
Now, noticing that $L_d$ is $G_\mu$-regular, let us check that $G_\mu$ is a group of $\mu$-good symmetries. Indeed, given $q \in Q$, the equation $J_\mu(q,l_g^\mu(q)) = \mu$ is
$$\frac{m}{h}(x + a - x) = \mu_2,$$
which has $a := \mu_2 h/m$ as its only solution.

Let us compute $J_\mu : Q \times Q \lra \frakg_\mu^*$. In order to do that, we need the infinitesimal generator $\xi_Q^\mu$ associated to this new action we are considering.

Given $q \in Q$ and $\xi = (0,\xi_2,0) \in \frakg_\mu$, $\xi_Q^\mu(q) = (0,\xi_2,0)$. Then, the momentum map is
$$J_\mu(q_0,q_1) \cdot \xi = \frac{m}{h} \left( x_1 - x_0 \right) \xi_2,$$
$$J_\mu(q_0,q_1) = \left( 0 , \frac{m}{h} (x_1 - x_0) , 0 \right) \in \frakg_\mu^*.$$
		
The discrete connection form $\calA_\mu$ is given by
\begin{equation}\label{eq:A_mu}
\calA_\mu(q_0,q_1) = \left( 0 , x_1 - x_0 - \frac{\mu_2 h}{m} , 0 \right),
\end{equation}
which can be checked computing
$$J_\mu \left( q_0 , l^\mu_{- \calA_d(q_0,q_1)}(q_1) \right) = \left( 0 , \frac{m}{h} \left( x_1 - x_1 + x_0 + \frac{\mu_2 h}{m} - x_0 \right) , 0 \right) = (0 , \mu_2 , 0) = \mu.$$

Therefore,
\[
\Hor_{\calA_\mu} = J_\mu^{-1}(\mu) = \left\{ (q_0,q_1) \in Q \times Q \mid \frac{m}{h} \left( x_1 - x_0 \right) = \mu_2 \right\}.
\]
\end{example}

Now we are going to consider the spaces and isomorphisms defined in Section \ref{sec:red_dyn} but associated to the action of the isotropy group $G_\mu$, instead of the group $G$, and the affine discrete connection $\calA_\mu$.

Also, we consider a connection $\frakA$ on the principal bundle $\pi_\mu : Q \lra Q/G_\mu$ with horizontal bundle $\Hor_\frakA$. Thus, $TQ = \mathcal{V}^{G_\mu} \oplus \Hor_\frakA$, where $\mathcal{V}^{G_\mu}$ is the vertical bundle and $h_\mu : Q \times_{Q/G_\mu} T(Q/G_\mu) \lra TQ$ is the horizontal lift.

By considering the action of $G_\mu$ on $Q \times G_\mu$ given by $l_g^{Q \times G_\mu}(q,w) := (l_g^Q(q),l_g^{G_\mu}(w))$, with $l_g^{G_\mu}(w) := g w g^{-1}$, one can define the quotient manifold $\widetilde{G}_\mu := (Q \times G_\mu)/G_\mu$ and the canonical quotient map denoted by $\rho_\mu : Q \times G_\mu \lra \widetilde{G}_\mu$. That is, $\widetilde{G}_\mu$ is the conjugate associated bundle (over $Q/G_\mu$), whose projection map will be denoted $\pr_{Q/G_\mu}$.

The diffeomorphisms associated to this connection $\calA_\mu$ are
\[
\begin{split}
\widetilde{\Phi}_{\mathcal{A}_{\mu}} : Q \times Q \lra Q \times G_{\mu} \times (Q/G_{\mu}), \quad &\widetilde{\Phi}_{\mathcal{A}_{\mu}}(q_0,q_1) := (q_0,\mathcal{A}_{\mu}(q_0,q_1), \pi_{\mu}(q_1)) \\
\widetilde{\Psi}_{\mathcal{A}_{\mu}} : Q \times G_{\mu} \times (Q/G_{\mu}) \lra Q \times Q, \quad &\widetilde{\Psi}_{\mathcal{A}_{\mu}}(q_0,w_0,\tau_1) := (q_0,\widetilde{F}_1^{\mu}(q_0,w_0,\tau_1)),
\end{split}
\]
where $\widetilde{F}_1^{\mu}:Q\times G_\mu\times (Q/G_\mu)\lra Q$ is defined as the map $\widetilde{F}_1$ by using the affine discrete connection $\calA_\mu$, together with
\[
\begin{split}
&\Phi_{\calA_{\mu}} : (Q \times Q)/G_\mu \lra \widetilde{G_\mu} \times Q/G_\mu, \\
&\Phi_{\calA_{\mu}} \left( [q_0,q_1]_\mu \right) := \left( \rho_\mu(q_0,\calA_{\mu}(q_0,q_1)) , \pi_\mu(q_1) \right).
\end{split}
\]

It is clear that if we restrict the diffeomorphims $\widetilde{{\Phi}}_{\calA_\mu}$ and $\Phi_{\calA_\mu}$ to the submanifolds $\Hor_{\calA_\mu}$ and $(\Hor_{\calA_\mu})/G_\mu$, respectively, we obtain the diffeomorphims
\[
\widetilde{\Phi}_{\mu} : \Hor_{\mathcal{A}_{\mu}} \lra Q \times \{ e \} \times Q/G_\mu, \quad \widetilde{\Phi}_{\mu}(q_0,q_1) := (q_0,e, \pi_{\mu}(q_1) )
\]
and
\[
\begin{split}
\Phi_\mu  : (\Hor_{\mathcal{A}_{\mu}})/G_{\mu} &\lra (Q \times \{ e \})/G_\mu \times Q/G_\mu, \\
\Phi_\mu  \left( [q_0,q_1]_\mu \right) &:= \left( \rho_\mu(q_0,e) , \pi_\mu(q_1) )\right).
\end{split}
\]

In order to see that the product manifold $(Q/G_\mu) \times (Q/G_\mu)$ is a model for the reduced space $(\Hor_{\calA_\mu})/G_\mu$ we prove the following result.

\begin{proposition}
The map $\calX_\mu : (Q \times \{ e \})/G_\mu \times Q/G_\mu \lra (Q/G_\mu) \times (Q/G_\mu)$ given by
$$\calX_\mu(\rho_\mu(q,e),\tau) := \left( \pr^{Q/G_\mu}(\rho_\mu(q,e)) , \tau \right) = (\pi_\mu(q),\tau),$$
where $\pr^{Q/G_\mu} : (Q \times \{ e \})/G_\mu \lra Q/G_\mu$ is the projection of the bundle $(Q \times \{ e \})/G_\mu$ over $Q/G_\mu$ with fiber $\{ e \}$  is a diffeomorphism, whose inverse is $\calY_\mu : (Q/G_\mu) \times (Q/G_\mu) \lra (Q \times \{ e \})/G_\mu \times Q/G_\mu$ given by
$\calY_\mu(\tau_0,\tau_1) := (\rho_\mu(q_0,e),\tau_1)$ for any $q_0 \in (\pi_\mu)^{-1}(\tau_0)$.
\end{proposition}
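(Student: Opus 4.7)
The plan is to verify separately that $\calY_\mu$ is well defined, that $\calX_\mu \circ \calY_\mu$ and $\calY_\mu \circ \calX_\mu$ are both identity maps, and that both maps are smooth; the content of the proposition is essentially that the factor $(Q\times\{e\})/G_\mu$ collapses to $Q/G_\mu$ because the conjugation action of $G_\mu$ on the single element $e\in G_\mu$ is trivial.

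First I would check that $\calY_\mu$ is independent of the chosen representative $q_0\in \pi_\mu^{-1}(\tau_0)$. If $q_0,q_0'\in\pi_\mu^{-1}(\tau_0)$, there exists $g\in G_\mu$ with $q_0'=l^Q_g(q_0)$. Since $l^{G_\mu}_g(e)=geg^{-1}=e$, we have
\[
(q_0',e)=(l^Q_g(q_0),\,l^{G_\mu}_g(e))=l^{Q\times G_\mu}_g(q_0,e),
\]
so $\rho_\mu(q_0',e)=\rho_\mu(q_0,e)$, proving well-definedness.

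Next I would verify the two compositions. Given $(\rho_\mu(q,e),\tau)\in (Q\times\{e\})/G_\mu\times Q/G_\mu$, we get $\calX_\mu(\rho_\mu(q,e),\tau)=(\pi_\mu(q),\tau)$, and then $\calY_\mu(\pi_\mu(q),\tau)=(\rho_\mu(q,e),\tau)$, because $q\in\pi_\mu^{-1}(\pi_\mu(q))$ is an admissible choice in the definition of $\calY_\mu$. Conversely, $\calY_\mu(\tau_0,\tau_1)=(\rho_\mu(q_0,e),\tau_1)$ for some $q_0\in\pi_\mu^{-1}(\tau_0)$, and applying $\calX_\mu$ gives $(\pi_\mu(q_0),\tau_1)=(\tau_0,\tau_1)$. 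This proves $\calY_\mu=\calX_\mu^{-1}$ set-theoretically.

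Finally I would address smoothness. The map $\calX_\mu$ is manifestly smooth, since it is built from the smooth projection $\pr^{Q/G_\mu}$ and the identity on $Q/G_\mu$. For $\calY_\mu$, one uses the principal bundle structure of $\pi_\mu:Q\lra Q/G_\mu$: around any $\tau_0\in Q/G_\mu$ there is a local smooth section $s:U\lra Q$, and on $U\times Q/G_\mu$ one has $\calY_\mu(\tau_0,\tau_1)=(\rho_\mu(s(\tau_0),e),\tau_1)$, which is smooth as a composition of smooth maps. The only subtlety, and probably the main (though still minor) obstacle, is convincing oneself that the quotient structure of $(Q\times\{e\})/G_\mu$ really is a smooth manifold diffeomorphic to $Q/G_\mu$ via $\pr^{Q/G_\mu}$; this follows from the fact that the $G_\mu$-action on $Q\times\{e\}$ (restricted from $l^{Q\times G_\mu}$) has $\pr_1$ equivariant with respect to $l^Q$ and the conjugation action on $\{e\}$ being trivial, so the quotient is identified with $Q/G_\mu$ in a canonical smooth way. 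Combining this with the set-theoretic inverse computation proves that $\calX_\mu$ is a diffeomorphism with inverse $\calY_\mu$.
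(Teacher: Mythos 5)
Your proof is correct and follows essentially the same route as the paper: both arguments hinge on the observation that the conjugation action of $G_\mu$ on $\{e\}$ is trivial, so $\rho_\mu(q,e)$ depends only on $\pi_\mu(q)$, and then verify the two compositions are identities. You supply more detail than the paper does on the well-definedness of $\calY_\mu$ and on its smoothness (via local sections of $\pi_\mu$), points the paper simply declares clear because the maps are built from canonical projections; this extra care is welcome but does not change the argument.
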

\begin{proof}
It is clear that $\calY_\mu$ and $\calX_\mu$ are well defined and are smooth, because $\rho_\mu$ and $\pr^{Q/G_\mu}$ are canonical projections. Let us check that they are mutually inverses.
	
If $(\rho_\mu(q,e),\tau) \in (Q \times \{ e \})/G_\mu \times Q/G_\mu$, then
$$(\calY_\mu \circ \calX_\mu)(\rho_\mu(q,e),\tau) = \calY_\mu(\pi_\mu(q),\tau) = \left( \rho_\mu(\widetilde{q},e) , \tau \right) = \left( \rho_\mu(q,e) , \tau \right),$$
for some $\widetilde{q} \in (\pi_\mu)^{-1}(\pi_\mu(q))$, and where we have used the fact that $\rho_\mu(\widetilde{q},e) = \rho_\mu(q,e)$, because $q$ and $\widetilde{q}$ lie on the same fiber and the action of $G_\mu$ on $\{ e \}$ is trivial.
	
On the other hand, if $(\tau_0,\tau_1) \in (Q/G_\mu) \times (Q/G_\mu)$, then, for some $q_0 \in (\pi_\mu)^{-1}(\tau_0)$,
\[ (\calX_\mu \circ \calY_\mu)(\tau_0,\tau_1) = \calX_\mu(\rho_\mu(q_0,e),\tau_1) = (\pi_\mu(q_0),\tau_1) = (\tau_0,\tau_1). \]
\end{proof}

\begin{corollary}\label{alpha-mu}
The map $\alpha_\mu := \calX_\mu \circ \Phi_\mu: J_\mu^{-1}(\mu)/G_\mu\lra (Q/G_\mu) \times (Q/G_\mu)$ given by 
$\alpha_\mu \left( [q_0,q_1]_\mu \right)=\left(\pi_\mu(q_0),\pi_\mu(q_1)    \right)$ is a diffeomorphism.
\end{corollary}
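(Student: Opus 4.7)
The plan is to exhibit $\alpha_\mu$ as a composition of two maps already known to be diffeomorphisms and then verify the explicit formula by chasing a point through the composition.

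First I would recall that by construction of the affine discrete connection $\calA_\mu$ associated to $J_\mu$, the horizontal submanifold satisfies $\Hor_{\calA_\mu} = J_\mu^{-1}(\mu)$, so that the quotient $J_\mu^{-1}(\mu)/G_\mu$ is precisely $(\Hor_{\calA_\mu})/G_\mu$, which is the domain appearing for $\Phi_\mu$ in the excerpt. The map $\Phi_\mu : (\Hor_{\calA_\mu})/G_\mu \lra (Q\times\{e\})/G_\mu \times Q/G_\mu$ is a diffeomorphism because it arises as the restriction of $\Phi_{\calA_\mu}$ to the horizontal submanifold, and $\Phi_{\calA_\mu}$ itself is a diffeomorphism by the construction in the paragraph preceding the statement of $\Phi_\mu$ (and, ultimately, by Lemma~2.4.2 of \cite{C-M-R} and Proposition~4.19 of \cite{F-T-Z} applied to the $G_\mu$ action). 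The map $\calX_\mu$ is a diffeomorphism by the Proposition just proved above.

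Since $\alpha_\mu = \calX_\mu \circ \Phi_\mu$ is a composition of two diffeomorphisms, it is itself a diffeomorphism. To obtain the explicit formula, I would simply chase an element: given $[q_0,q_1]_\mu \in J_\mu^{-1}(\mu)/G_\mu$, we have
\[
\Phi_\mu\bigl([q_0,q_1]_\mu\bigr) = \bigl(\rho_\mu(q_0,e),\pi_\mu(q_1)\bigr)
\]
(using that $\calA_\mu(q_0,q_1)=e$ precisely because $(q_0,q_1)\in\Hor_{\calA_\mu}$), and then applying $\calX_\mu$ yields
\[
\calX_\mu\bigl(\rho_\mu(q_0,e),\pi_\mu(q_1)\bigr) = \bigl(\pi_\mu(q_0),\pi_\mu(q_1)\bigr),
\]
which is the asserted formula.

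There is essentially no obstacle here; the content of the corollary is the bookkeeping identification of the domain $J_\mu^{-1}(\mu)/G_\mu$ with $(\Hor_{\calA_\mu})/G_\mu$, which is immediate from the defining property $\calA_\mu^{-1}(\{e\})=J_\mu^{-1}(\mu)$, together with the fact that both building blocks $\Phi_\mu$ and $\calX_\mu$ have already been shown to be diffeomorphisms. The only point requiring a line of care is verifying that the $G_\mu$-action on the $\{e\}$-fiber is trivial, so that $\rho_\mu(q_0,e)$ is well-defined on $G_\mu$-orbits, but this is precisely what was used in the preceding Proposition.
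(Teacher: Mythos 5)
Your proposal is correct and follows essentially the same route as the paper: both prove the claim by observing that $\alpha_\mu$ is a composition of the diffeomorphisms $\Phi_\mu$ and $\calX_\mu$ and then verify the explicit formula by chasing a point through the composition, using $\calA_\mu(q_0,q_1)=e$ on $\Hor_{\calA_\mu}=J_\mu^{-1}(\mu)$. The only difference is that the paper additionally records the explicit inverse $\delta_\mu = \Phi_\mu^{-1}\circ\calY_\mu$ in terms of the discrete horizontal lift, which your argument does not need but which is used later in the text.
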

\begin{proof}
It is clear that $\alpha_\mu$ is a diffeormorphism because it is a composition of diffeomorphisms and
$$\alpha_\mu([q_0,q_1]_\mu)=\calX_\mu\left( \rho_\mu(q_0,e) , \pi_\mu(q_1) \right)=(\pi_\mu(q_0),\pi_\mu(q_1))$$
with inverse given by
\[
\delta_\mu := \Phi_\mu^{-1} \circ \calY_\mu : Q/G_\mu \times Q/G_\mu \lra J_\mu^{-1}(\mu)/G_\mu
\]
\[
\delta_\mu(\tau_0,\tau_1) = \widetilde{\pi}_\mu(h_{d,\mu}^{q_0}(\tau_1)),
\]
where $q_0 \in (\pi_\mu)^{-1}(\tau_0)$ and $h_{d,\mu}^{q_0}$ is the discrete horizontal lift associated to $\calA_\mu$.
\end{proof}

\begin{example}\label{example_1}
In the context of the Example \ref{example}, we have that the $G_\mu$-orbit is
\[
G_\mu \cdot q = \{ (\varphi, x + a ,y) \mid a \in \rr \}
\]
and the vertical space is
\[
\calV_q^{G_\mu} = \{ 0 \} \times \rr \times \{ 0 \}.
\]

We may take a principal connection whose horizontal space is
\[
\calH_q^\nu := \rr \times \Span\{ (\nu , 1) \},
\]
for a chosen $\nu \in \rr$. Then, for $\delta q = (\delta \varphi,\delta x,\delta y) \in T_qQ$,
\begin{equation}\label{eq:A_conec}
\frakA(\delta q) = (0,\delta x - \nu \delta y,0).
\end{equation}
		
Given $\delta \tau = (\delta \varphi,\delta y) \in T_\tau (Q/G_\mu)$, the horizontal lift is
\begin{equation}\label{eq:h_mu}
h_\mu^q(\delta \tau) := (\delta \varphi, \nu \delta y,\delta y).
\end{equation}
	
Recalling that the level set of $J_\mu$ is simply
$$J_\mu^{-1}(\mu) = \left\{ (q_0,q_1) \in Q \times Q \mid \frac{m}{h} \left( x_1 - x_0 \right) = \mu_2 \right\},$$
we have
\[
J_\mu^{-1}(\mu)/G_\mu \simeq S^1 \times \rr \times S^1 \times \rr \simeq (Q/G_\mu) \times (Q/G_\mu),
\]
with the projection given by $\widetilde{\pi}_\mu : J_\mu^{-1}(\mu) \lra J_\mu^{-1}(\mu)/G_\mu$,
\[
\widetilde{\pi}_\mu (q_0,q_1) = (\varphi_0,y_0,\varphi_1,y_1).
\]
\end{example}

\subsection{Discrete Routh reduction}\label{sec:routh}

Usually, Routh reduction is identified with a reduction process of a symmetry for a 
Lagrangian system given by an abelian Lie group. Also, it can be considered as the Lagrangian analogous of Marsden Weinstein's symplectic reduction process in the context of Hamiltonian mechanical systems.

In what follows we consider a discrete mechanical system $(Q,L_d)$ with a symmetry Lie group $G$ not necesarilly abelian such that $L_d$ is regular. In order to make the most of the momentum preservation we can apply the Theorem \ref{th:4_points-general} using the affine discrete connection $\calA_{\mu}$ and identifying the reduced spaces using Corollary \ref{alpha-mu}. The reduced objects are
\[
\begin{split}
\check{L}_\mu : Q \times \{ e \} \times Q/G_\mu \lra \rr, \quad &\check{L}_\mu(q_0,e,\tau_1) := L_d \circ (\widetilde{\Phi}_\mu)^{-1} \\
\hat{L}_{\mu} : (Q \times \{ e \})/G_\mu \times Q/G_{\mu} \lra \rr, \quad &\hat{L}_{\mu}(\rho_{\mu}(q_0,e),\tau_1) := \check{L}_{\mu}(q_0,e,\tau_1) \\
\breve{L}_{\mu} := \hat{L}_{\mu} \circ \calY_{\mu}: Q/G_{\mu} \times Q/G_{\mu} \lra \rr, \quad &\breve{L}_{\mu} (\tau_0,\tau_1):= \check{L}_{\mu}(q_0,e,\tau_1).
\end{split}
\]

In a diagram,
\[
\xymatrixcolsep{5pc}\xymatrixrowsep{4pc}\xymatrix{
& & \rr \\
J_\mu^{-1}(\mu) \ar[d]_-{\alpha_\mu \circ \widetilde{\pi}_\mu} \ar[r]^-{\widetilde{\Phi}_\mu} \ar@/^/[urr]^{L_d} & Q \times \{ e \} \times Q/G_\mu \ar[ur]^{\check{L}_\mu} \ar[d]_-{\rho_\mu \times 1_{Q/G_\mu}} & \\
Q/G_\mu \times Q/G_\mu \ar[r]^-{\calY_\mu} & (Q \times \{ e \})/G_\mu \times Q/G_\mu \ar@/_/[uur]_-{\hat{L}_{\mu}} &
}
\]
where in reality $L_d$ is $L_d|_{J_\mu^{-1}(\mu)}$ and $\widetilde{\pi}_\mu$ is $\widetilde{\pi}_\mu|_{J_\mu^{-1}(\mu)}$.

Then, taking into consideration the technical results of Lemmas 10.4 and 10.5 of \cite{F-T-Z}, we have the following version of Theorem \ref{th:4_points-general}.

\begin{theorem}\label{hor-sym-theor-mu}
Let $G_\mu$ be a group of $\mu$-good symmetries of the discrete mechanical system $(Q,L_d)$, for some $\mu \in \frak g^{*}$. Let $q.$ be a discrete curve in $Q$ with momentum $\mu$ and let $\tau_k := \pi_\mu(q_k)$ be the corresponding curve on $Q/G_\mu$. Then, the following statements are equivalent:
\begin{enumerate}
\item $q.$ satisfies the variational principle $dS_d(q.)(\delta q.) = 0$ for all vanishing endpoints variations $\delta q.$.
		
\item $q.$ satisfies the discrete Euler-Lagrange equations \eqref{eq:DEL}.
		
\item $\tau.$ satisfies the variational principle
\begin{equation}
d\breve{\frakS}_\mu(\tau.) = - \sum_{k=0}^{N-1} \breve{f}_\mu(\tau_k,\tau_{k+1}),
\end{equation}
for all vanishing endpoints variations $\delta \tau.$, where
$$\breve{\frakS}_\mu(\tau.) := \sum_{k=0}^{N-1} \breve{L}_\mu(\tau_k,\tau_{k+1})$$
and
\[
\breve{f}_\mu(\tau_0,\tau_1)(\delta \tau_0,\delta \tau_1) :=  D_2 \check{L}_\mu (q_0,e,\tau_1) \left( T_{(q_0,q_1)} \calA_{\mu}  (h_{\mu}^{q_0}(\delta \tau_0),h_{\mu}^{q_1}(\delta \tau_1)) \right).
\]
	
\item $\tau.$ satisfies the forced discrete forced Euler-Lagrange equations 
\begin{equation}\label{fdeEL}
D_1 \breve{L}_\mu (\tau_k,\tau_{k+1}) + D_2 \breve{L}_\mu (\tau_{k-1},\tau_k) + \breve{f}_\mu^+(\tau_{k-1},\tau_k) + \breve{f}_\mu^-(\tau_k,\tau_{k+1}) = 0,
\end{equation}
where
$$\breve{f}_\mu(\tau_k,\tau_{k+1}) (\delta \tau_k,\delta \tau_{k+1}) = \breve{f}_\mu^-(\tau_k,\tau_{k+1})(\delta \tau_k) + \breve{f}_\mu^+(\tau_k,\tau_{k+1})(\delta \tau_{k+1}).$$
\end{enumerate}
\end{theorem}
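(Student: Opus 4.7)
The plan is to derive this theorem from the general reduction result Theorem~\ref{th:4_points-general}, specialized to the isotropy subgroup $G_\mu$ and the pair of connections $(\frakA, \calA_\mu)$. Two of the four equivalences are routine: (1)$\Leftrightarrow$(2) is the classical discrete variational calculus for an unforced system (Theorem~\ref{thm:equations} with $f_d \equiv 0$), and (3)$\Leftrightarrow$(4) is the discrete Lagrange--d'Alembert principle (Theorem~\ref{thm:equations}) applied directly to the forced discrete mechanical system $(Q/G_\mu, \breve{L}_\mu, \breve{f}_\mu)$ on the reduced curve $\tau.$ with fixed endpoints $\delta\tau_0 = \delta\tau_N = 0$. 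The substantial content lies in the equivalence (2)$\Leftrightarrow$(4), on which I would focus.

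For (2)$\Rightarrow$(4), I would start by observing that since $q.$ has constant discrete momentum $\mu$, each pair $(q_k, q_{k+1})$ lies in $J_\mu^{-1}(\mu) = \Hor_{\calA_\mu}$; equivalently, $w_k := \calA_\mu(q_k, q_{k+1}) = e$ for every $k$. Under the diffeomorphism $\alpha_\mu$ of Corollary~\ref{alpha-mu}, the reduced curve $(v_k, \tau_{k+1}) = (\rho_\mu(q_k, e), \tau_{k+1})$ on $\widetilde{G}_\mu \times (Q/G_\mu)$ corresponds to $(\tau_k, \tau_{k+1}) \in (Q/G_\mu)\times(Q/G_\mu)$, and $\hat{L}_\mu(v_k, \tau_{k+1}) = \breve{L}_\mu(\tau_k, \tau_{k+1})$. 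Applying item (\ref{it:red_lda_eq-general}) of Theorem~\ref{th:4_points-general} (with $f_d \equiv 0$) to the $G_\mu$-symmetry, the discrete Euler--Lagrange equations \eqref{eq:DEL} are equivalent to the simultaneous vanishing of the covector $\phi \in T^*_{\tau_k}(Q/G_\mu)$ (horizontal part) and of $\psi \in \frakg_\mu^*$ (vertical part).

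The vertical equation, evaluated at $w_{k-1} = w_k = e$, reduces to $(D_2 \check{L}_\mu(q_{k-1}, e, \tau_k) - D_2 \check{L}_\mu(q_k, e, \tau_{k+1}))(\xi_k) = 0$ for all $\xi_k \in \frakg_\mu$. Using the technical identifications in Lemmas~10.4--10.5 of \cite{F-T-Z}, this is precisely the conservation identity $J_\mu(q_{k-1}, q_k) = J_\mu(q_k, q_{k+1}) = \mu$, which holds automatically on $J_\mu^{-1}(\mu)$. The horizontal equation $\phi = 0$ involves the four terms $D_1\check{L}_\mu \circ h_\mu^{q_k}$, $D_3 \check{L}_\mu$, $D_2\check{L}_\mu \cdot D_1\calA_\mu \circ h_\mu^{q_k}$, and $D_2\check{L}_\mu \cdot D_2\calA_\mu \circ h_\mu^{q_k}$; the first two recombine, via the chain rule for $\breve{L}_\mu = \hat{L}_\mu \circ \calY_\mu$ applied to $\check{L}_\mu(q_0, e, \tau_1) = \breve{L}_\mu(\pi_\mu(q_0), \tau_1)$, into $D_1 \breve{L}_\mu(\tau_k, \tau_{k+1}) + D_2 \breve{L}_\mu(\tau_{k-1}, \tau_k)$, while the last two are exactly the components $\breve{f}_\mu^-(\tau_k, \tau_{k+1})$ and $\breve{f}_\mu^+(\tau_{k-1}, \tau_k)$ appearing in the definition of $\breve{f}_\mu$. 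Hence $\phi = 0$ is precisely equation \eqref{fdeEL}.

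For the converse (4)$\Rightarrow$(2), given a solution $\tau.$ of \eqref{fdeEL} and an initial datum $q_0 \in \pi_\mu^{-1}(\tau_0)$, I would reconstruct $q.$ inductively by $q_{k+1} := \overline{h_{d,\mu}^{q_k}}(\tau_{k+1})$, which puts each consecutive pair into $\Hor_{\calA_\mu}$ and hence ensures $J_\mu(q_k, q_{k+1}) = \mu$; the identifications above then provide both the horizontal and the vertical parts of the reduced equations, so Theorem~\ref{th:4_points-general} returns \eqref{eq:DEL} for $q.$. The main obstacle I expect is the chain-rule bookkeeping in the horizontal equation: one must differentiate $\check{L}_\mu(q_0, e, \tau_1) = L_d(q_0, \overline{h_{d,\mu}^{q_0}}(\tau_1))$ while tracking how $\calA_\mu$ enters through the parameterization $\widetilde{\Psi}_{\calA_\mu}$, which is exactly the computational content encapsulated in Lemmas~10.4--10.5 of \cite{F-T-Z}.
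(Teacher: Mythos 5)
Your proposal is correct and follows essentially the same route as the paper: the paper obtains Theorem \ref{hor-sym-theor-mu} precisely by specializing Theorem \ref{th:4_points-general} to the $G_\mu$-symmetry with the affine discrete connection $\calA_\mu$ (so that $w_k=e$ on curves of momentum $\mu$), identifying the reduced space via Corollary \ref{alpha-mu}, and invoking Lemmas 10.4--10.5 of \cite{F-T-Z} for the chain-rule bookkeeping that turns the horizontal/vertical split of the reduced equations into \eqref{fdeEL} plus the automatically satisfied momentum identity. Your write-up simply makes explicit the details the paper delegates to those references.
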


\begin{remark}
The previous result shows that the reduced system obtained under the reduction process is a FDMS $(Q/G_\mu,\breve{L}_\mu,\breve{f}_\mu)$. When this system has an additional symmetry a reduction process by stages can be applied, i.e. perform a new reduction process by the residual symmetry using, for example, Theorem 3.11 of \cite{C-F-T-Z}.
\end{remark}

\begin{remark}\label{remark:discrete vs cont routh reduction}
A similar scenario can be obtained in the continuous setting, as explored in \cite{ar:marsden_ratiu_scheurle:2000:reduction_theory_and_the_lagrange-routh_equations}. Considering the constructions of the Section III of that paper and taking the quotient always with respect to the isotropy subgroup $G_\mu$ (instead of considering the whole group $G$), part of the right--hand side of the variational expression of Theorem III.9 vanishes (for fixed--endpoints variations), implying that the equations of motion of the reduced system are those of a forced Lagrangian system, with the reduced Routhian playing the role of the Lagrangian and the force being obtained contracting with a $2$-form on the reduced space.
\end{remark}

There is a simpler way to compute the force $\breve{f}_\mu$, as shown by the following lemma.

\begin{lemma}\label{lemma:fmu en terminos de mu}
Given $(\delta \tau_0,\delta \tau_1) \in T_{(\tau_0,\tau_1)}((Q/G_\mu) \times (Q/G_\mu))$,
$$\breve{f}_\mu(\tau_0,\tau_1)(\delta \tau_0,\delta \tau_1)  = \left \langle \mu \ ,  T_{(q_0,q_1)} \calA_\mu  (h_\mu^{q_0}(\delta \tau_0),h_\mu^{q_1}(\delta \tau_1)) \right \rangle,$$
where $q_0 \in Q$ is such that $\pi_\mu(q_0) = \tau_0$ and $q_1 := \overline{h_{d,\mu}^{q_0}} (\tau_1)$.
\end{lemma}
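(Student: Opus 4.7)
The plan is to unpack the definition of $\breve{f}_\mu$ and identify the derivative $D_2\check{L}_\mu(q_0,e,\tau_1)$ with the discrete momentum map $J_\mu$ evaluated at $(q_0,q_1)$, which equals $\mu$ by the horizontality hypothesis.

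First, I would observe that since $q_1 = \overline{h_{d,\mu}^{q_0}}(\tau_1)$, the pair $(q_0,q_1)$ lies in $\Hor_{\calA_\mu}$, so $\calA_\mu(q_0,q_1) = e$. Consequently the tangent map $T_{(q_0,q_1)}\calA_\mu$ lands in $T_eG_\mu = \frakg_\mu$, so the element
\[
\eta := T_{(q_0,q_1)}\calA_\mu\bigl(h_\mu^{q_0}(\delta\tau_0), h_\mu^{q_1}(\delta\tau_1)\bigr)
\]
is a well-defined Lie algebra element of $\frakg_\mu$ on which we can evaluate $\mu \in \frakg_\mu^*$. Also, from $\Hor_{\calA_\mu} = J_\mu^{-1}(\{\mu\})$ we have $J_\mu(q_0,q_1) = \mu$.

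Next, I would compute $D_2\check{L}_\mu(q_0,e,\tau_1)$ explicitly. Recall $\check{L}_\mu = L_d \circ \widetilde{\Psi}_{\calA_\mu}$ with $\widetilde{\Psi}_{\calA_\mu}(q_0,w,\tau_1) = (q_0, l^Q_w(\overline{h_{d,\mu}^{q_0}}(\tau_1))) = (q_0, l^Q_w(q_1))$. Differentiating with respect to the second argument at $w=e$ in the direction $\eta \in \frakg_\mu$,
\[
D_2\check{L}_\mu(q_0,e,\tau_1)(\eta) = \frac{d}{dt}\bigg|_{t=0} L_d\bigl(q_0, l^Q_{\exp(t\eta)}(q_1)\bigr) = D_2 L_d(q_0,q_1)\cdot \eta_Q(q_1).
\]
By the definition of the discrete momentum map (and the $G_\mu$-invariance of $L_d$, which yields the equivalent expression in terms of $D_1 L_d$), this equals $J_\mu(q_0,q_1)(\eta) = \langle \mu, \eta\rangle$.

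Substituting $\eta = T_{(q_0,q_1)}\calA_\mu(h_\mu^{q_0}(\delta\tau_0), h_\mu^{q_1}(\delta\tau_1))$ into the definition of $\breve{f}_\mu$ from Theorem \ref{hor-sym-theor-mu} then yields the claimed identity. No step is particularly delicate here; the only care needed is in checking that the arguments live in the right spaces so that pairing with $\mu$ makes sense, which is exactly what horizontality $(q_0,q_1)\in J_\mu^{-1}(\{\mu\}) = \Hor_{\calA_\mu}$ provides.
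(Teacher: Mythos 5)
Your proposal is correct and follows essentially the same route as the paper: unpack the definition of $\breve{f}_\mu$, differentiate $\check{L}_\mu$ in its group slot along a curve through $e$ to obtain $D_2\check{L}_\mu(q_0,e,\tau_1)(\xi) = D_2L_d(q_0,q_1)\cdot\xi_Q(q_1) = J_\mu(q_0,q_1)\cdot\xi = \langle\mu,\xi\rangle$, and substitute. Your preliminary remark that $\calA_\mu(q_0,q_1)=e$ ensures the tangent map lands in $\frakg_\mu$ is a point the paper leaves implicit, but it does not change the argument.
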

\begin{proof}
By definition
\[
\breve{f}_\mu (\tau_0,\tau_1)(\delta \tau_0,\delta \tau_1)  =  D_2 \check{L}_\mu (q_0,e,\tau_1) \left( T_{(q_0,q_1)} \mathcal{A}_{\mu}  (h_{\mu}^{q_0}(\delta \tau_0),h_{\mu}^{q_1}(\delta \tau_1) \right),
\]
where $q_0 \in Q$ is such that $\pi_\mu(q_0) = \tau_0$ and $q_1 := \overline{h_{d,\mu}^{q_0}} (\tau_1)$.
	
We will see that for $(q_0,e,\tau_1) \in Q \times G_\mu \times (Q/G_\mu)$ and $\xi \in T_eG_\mu$,  $\check{L}_\mu$ satisfies that
$$D_2\check{L}_\mu(q_0,e,\tau_1) ( \xi) = \langle \mu , \xi \rangle$$
so that $\breve{f}_\mu (\tau_0,\tau_1)(\delta \tau_0,\delta \tau_1)  = \left\langle \mu  \ T_{(q_0,q_1)} \calA_\mu  (h_\mu^{q_0}(\delta \tau_0),h_\mu^{q_1}(\delta \tau_1)) \right\rangle.$

Let $\alpha(t) = (q_0,g(t),\tau_1)$ be a curve contained in $Q \times G_\mu \times (Q/G_\mu)$ such that $\alpha(0) = (q_0,e,\tau_1)$ y $\dot{\alpha}(0) = (0,\xi,0)$. Then,
\[
\begin{split}
D_2\check{L}_\mu(q_0,e,\tau_1) \cdot \xi &= \frac{d}{dt} \check{L}_\mu \circ \alpha(t) \bigg|_{t=0} \\
&= \frac{d}{dt} L_d \left( q_0 , l_{g(t)}^Q \left( \overline{h_{d,\mu}^{q_0}} (\tau_1) \right) \right) \bigg|_{t=0}
\end{split}
\]

Hence, calling $q_1 := \overline{h_{d,\mu}^{q_0}} (\tau_1)$,
\[
\begin{split}
D_2\check{L}_\mu(q_0,e,\tau_1) \cdot \xi &= D_2 L_d (q_0,q_1) \cdot \frac{d}{dt} l_{g(t)}^Q(q_1) \bigg|_{t=0} \\
&= D_2 L_d (q_0,q_1) \cdot \xi_Q(q_1) \\
&= J_\mu(q_0,q_1) \cdot \xi \\
&= \langle \mu , \xi \rangle,
\end{split}
\]
where the last equality is due to $(q_0,q_1) \in J_\mu^{-1}(\mu)$.
\end{proof}

\begin{example}\label{example_2}
Following with the calculations of the Example \ref{example_1} and using coordinates $\tau = (\varphi,y)$ for $Q/G_\mu$, we have that
\[
\begin{split}
\check{L}_d(q_0,e,\tau_1) &= \check{L}_d((\varphi_0,x_0,y_0),e,(\varphi_1,y_1)) = L_d \left( (\varphi_0,x_0,y_0) , \left( \varphi_1 , \frac{\mu_2 h}{m} + x_0 , y_1 \right) \right) \\
&= \frac{m}{2h} \left[ \left( \frac{\mu_2 h}{m} + x_0 - x_0 \right)^2 + (y_1 - y_0)^2 \right] + \frac{J}{2h} (\varphi_1 - \varphi_0)^2.
\end{split}
\]

It is clear that $\breve{L}_\mu : (Q/G_\mu) \times (Q/G_\mu) \lra \rr$ is given by
$$\breve{L}_\mu(\tau_0,\tau_1) = \frac{m}{2h} \left[ \left( \frac{\mu_2 h}{m} \right)^2 + (y_1 - y_0)^2 \right] + \frac{J}{2h} (\varphi_1 - \varphi_0)^2.$$

Therefore, if $\delta \tau_i \in T_{\tau_i}(Q/G_\mu)$ with $i = 0,1$, recalling \eqref{eq:A_mu} and \eqref{eq:h_mu}, we have that
$$T_{(q_0,q_1)} \calA_\mu  \left( h^{q_0}(\delta \tau_0) , h^{q_1}(\delta \tau_1) \right) = \left( 0 , \nu (\delta y_1 - \delta y_0) , 0 \right),$$
where $(\tau_0,\tau_1) = (\pi_\mu \times \pi_\mu)(q_0,q_1)$. Finally, by the previous lemma, this yields
$$\breve{f}_\mu(\tau_0,\tau_1)(\delta \tau_0 , \delta \tau_1) = \langle \mu , (0 , \nu(\delta y_1 - \delta y_0) , 0) \rangle = \mu_2 \nu (\delta y_1 - \delta y_0),$$
that is,
$$\breve{f}_\mu(\tau_0,\tau_1) = -\mu_2 \nu \ dy_0 + \mu_2 \nu \ dy_1.$$

Then,
$$\breve{f}_\mu^+(\tau_0,\tau_1) = (\mu_2 \nu,0,0), \quad \breve{f}_\mu^-(\tau_0,\tau_1) = (-\mu_2 \nu,0,0).$$	

According to Theorem \ref{hor-sym-theor-mu}, $\tau.$ is a trajectory if and only if it satisfies
$$D_1 \breve{L}_\mu (\tau_k,\tau_{k+1}) + D_2 \breve{L}_\mu (\tau_{k-1},\tau_k) + \breve{f}_\mu^+(\tau_{k-1},\tau_k) + \breve{f}_\mu^-(\tau_k,\tau_{k+1}) = 0.$$

This yields, for the $y$ coordinate,
$$\frac{m}{h} (y_k - y_{k-1}) - \frac{m}{h} (y_{k+1} - y_k) + \mu_2 \nu - \mu_2 \nu = 0,$$
which in turn implies that $y_{k+1} = 2 y_k - y_{k-1}$. Applying the same reasoning to the rest of the coordinates, we have that the reduced flow is given by
$${\bf F}_\mu(\tau_{k-1},\tau_k) = (\varphi_k,y_k,2 \varphi_k - \varphi_{k-1},2 y_k - y_{k-1}).$$
\end{example}

The statement of Theorem \ref{hor-sym-theor-mu} is strikingly similar to that of Theorem 2.3 of \cite{J-L-M-W}. To stablish a comparison, we first need a few definitions.

A principal connection $\frakA$ on $\pi_\mu : Q \lra Q/G_\mu$ induces a $1$-form $\frakA_\mu$ on $Q$ defined as
\begin{equation}\label{3-eq:frakA_mu}
	\frakA_\mu(q)(\delta q) := \langle \mu , \frakA(\delta q) \rangle.
\end{equation}

\begin{remark}
Notice that $\frakA_{\mu}$ is $G_\mu$-invariant. Indeed, given $q \in Q$, $\delta q \in T_qQ$ and $g \in G_\mu$,
\[
\frakA_{\mu}(l_g^Q(q))(Tl_g^Q(\delta q)) = \langle \mu , \frakA(Tl_g^Q(\delta q)) \rangle = \langle \mu , \Ad_g ( \frakA(\delta q)) \rangle \stackrel{\star}{=} \langle \mu , \frakA(\delta q) \rangle = \frakA_\mu(q)(\delta q),
\]
where, in $\star$, we have used that $g \in G_\mu$. However, $\frakA_\mu$ does not necessarily annihilate vertical vectors, and, therefore, it does not drop to a $1$-form on $Q/G_\mu$.
\end{remark}

If we call $\pr_i : Q/G_{\mu} \times Q/G_{\mu} \lra Q/G_{\mu}$ the projection on the $i$-th factor, $i = 1,2$, we have the $1$-form on $Q \times Q$ given by
\[
\scrA_\mu := \pr_2^* \frakA_\mu - \pr_1^* \frakA_\mu.
\]

This time, given a vertical vector $\xi_{Q \times Q}(q_0,q_1) = (\xi_Q(q_0),\xi_Q(q_1))$,
\[
\begin{split}
	\scrA_\mu (q_0,q_1)(\xi_Q(q_0),\xi_Q(q_1)) &= (\pr_2^* \frakA_\mu - \pr_1^* \frakA_\mu)(\xi_Q(q_0),\xi_Q(q_1)) \\
	&= \frakA_\mu(q_1)(\xi_Q(q_1)) - \frakA_\mu(q_0)(\xi_Q(q_0)) \\
	&= \langle \mu , \xi - \xi \rangle = 0.
\end{split}
\]

Then, following Section 2.4 of \cite{J-L-M-W}, the restriction of $\scrA_\mu$ to $J_\mu^{-1}(\mu)$ drops to a $1$-form on $J_\mu^{-1}(\mu)/G_\mu$ and this space can be identified with $Q/G_{\mu} \times Q/G_{\mu}$. That is, $\scrA_\mu$ drops to a $1$-form $\breve{\scrA}_\mu$ on $Q/G_\mu \times Q/G_\mu$.

With this notation, Theorem 2.3 of \cite{J-L-M-W} may be stated in the following way.

\begin{theorem}[Theorem 2.3 of \cite{J-L-M-W}]\label{4-teor:red-routh-Leok}
Let $G$ be an abelian Lie group and let $\tau.$ be a discrete curve on $Q/G$ and $q.$ a discrete curve on $Q$ with momentum $\mu$ obtained lifting $\tau.$. The following statements are equivalent:
\begin{enumerate}
\item $q.$ satisfies the discrete Hamilton principle.
		
\item $q.$ satisfies the discrete Euler--Lagrange equations \eqref{eq:DEL}.
		
\item $\tau.$ satisfies the variational principle
\[
d\widetilde{\frakS}_\mu(\tau.) = \sum_{k=0}^{N-1} \breve{\scrA}_\mu(\tau_k,\tau_{k+1})
\]
for all infinitesimal variation $\delta \tau.$ with vanishing endpoints, where
\[
\widetilde{\frakS}_\mu(\tau.) := \sum_{k=0}^{N-1} \widetilde{L}_\mu(\tau_k,\tau_{k+1})
\]
and $\widetilde{L}_\mu : Q/G \times Q/G \lra \rr$ is $L_d|_{J_\mu^{-1}(\mu)}$ dropped to the quotient.

\item $\tau.$ satisfies the equations
\[
D_2 \widetilde{L}_\mu (\tau_{k-1},\tau_k) + D_1 \widetilde{L}_\mu (\tau_k,\tau_{k+1}) = \breve{\scrA}_\mu^+(\tau_{k-1},\tau_k) + \breve{\scrA}_\mu^+(\tau_k,\tau_{k+1}),
\]
where
\[
\breve{\scrA}_\mu(\tau_k,\tau_{k+1}) (\delta \tau_k,\delta \tau_{k+1}) = \breve{\scrA}_\mu^-(\tau_k,\tau_{k+1})(\delta \tau_k) + \breve{\scrA}_\mu^+(\tau_k,\tau_{k+1})(\delta \tau_{k+1}).
\]
\end{enumerate}
\end{theorem}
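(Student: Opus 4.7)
The plan is to derive Theorem~\ref{4-teor:red-routh-Leok} as the specialization of Theorem~\ref{hor-sym-theor-mu} to the abelian setting, after identifying the force $-\breve{f}_\mu$ there with $\breve{\scrA}_\mu$ here. Since $G$ is abelian, the coadjoint action is trivial and $G_\mu = G$ for every $\mu\in\frakg^*$; consequently $J_\mu = J_d$, $\pi_\mu=\pi$, and the reduced Lagrangian $\breve{L}_\mu$ of Theorem~\ref{hor-sym-theor-mu} is precisely $\widetilde{L}_\mu$ (both being $L_d|_{J_\mu^{-1}(\mu)}$ dropped to $Q/G\times Q/G$). Under this identification, Theorem~\ref{hor-sym-theor-mu} directly supplies the chain (1)$\Leftrightarrow$(2)$\Leftrightarrow$(3)$\Leftrightarrow$(4), provided $-\breve{f}_\mu$ is replaced by $\breve{\scrA}_\mu$ in items~(3) and~(4). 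The single remaining task is therefore the pointwise identity $\breve{\scrA}_\mu = -\breve{f}_\mu$ on $Q/G\times Q/G$.

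To prove it, fix $(\tau_0,\tau_1)$, lift via $\calA_\mu$ to $(q_0,q_1) \in J_\mu^{-1}(\mu) = \Hor_{\calA_\mu}$, and fix tangent vectors $(\delta\tau_0,\delta\tau_1)$. As noted in the text immediately preceding the statement, $\scrA_\mu = \pr_2^*\frakA_\mu - \pr_1^*\frakA_\mu$ annihilates vertical vectors, so $\breve{\scrA}_\mu(\tau_0,\tau_1)(\delta\tau_0,\delta\tau_1)$ may be computed on any lift tangent to $J_\mu^{-1}(\mu)$. I would take the lift
\[
\bigl(h^{q_0}_\mu(\delta\tau_0)+(\xi_0)_Q(q_0),\; h^{q_1}_\mu(\delta\tau_1)+(\xi_1)_Q(q_1)\bigr)
\]
and use $J_\mu^{-1}(\mu) = \calA_\mu^{-1}(\{e\})$ to choose $\xi_0,\xi_1\in\frakg$ so that $T\calA_\mu$ annihilates this vector. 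Differentiating the equivariance $\calA_\mu(l^Q_{g_0}q_0, l^Q_{g_1}q_1) = g_1\calA_\mu(q_0,q_1)g_0^{-1}$ at the identity, together with $\calA_\mu(q_0,q_1)=e$, yields
\[
T\calA_\mu\bigl((\xi_0)_Q(q_0),(\xi_1)_Q(q_1)\bigr) = \xi_1-\xi_0,
\]
so the tangency condition becomes $\xi_1-\xi_0 = -T\calA_\mu(h^{q_0}_\mu(\delta\tau_0),h^{q_1}_\mu(\delta\tau_1))$. Evaluating $\scrA_\mu$ on the lift, the horizontal summands contribute nothing because $\frakA$ vanishes on them, leaving $\langle\mu,\xi_1\rangle - \langle\mu,\xi_0\rangle = \langle\mu,\xi_1-\xi_0\rangle$, which by Lemma~\ref{lemma:fmu en terminos de mu} is exactly $-\breve{f}_\mu(\tau_0,\tau_1)(\delta\tau_0,\delta\tau_1)$. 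Splitting into $\pm$-parts then gives $\breve{\scrA}_\mu^\pm = -\breve{f}_\mu^\pm$, recovering item~(4), and an analogous sign comparison in the variational expression recovers item~(3).

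I expect the main technical point to be the bookkeeping around the pair $(\xi_0,\xi_1)$: the tangency condition only fixes the difference $\xi_1-\xi_0$, so one must verify that the value of $\scrA_\mu$ on the lift is independent of a simultaneous shift in $\frakg$. This reduces to the horizontality of $\scrA_\mu$ already invoked, and is the conceptual content of the assertion that $\breve{\scrA}_\mu$ descends to a well-defined form on the quotient $J_\mu^{-1}(\mu)/G_\mu \simeq Q/G\times Q/G$. A secondary point is the use of the equivariance of $\calA_\mu$ at a point where $\calA_\mu=e$ (rather than at an arbitrary value), which is what makes the right-hand side $\xi_1-\xi_0$ cleanly land in $\frakg$ without the appearance of adjoint actions that would otherwise spoil the matching.
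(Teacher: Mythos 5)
Your overall route is exactly the paper's: the statement is quoted from \cite{J-L-M-W}, and the paper ``recovers'' it by specializing Theorem~\ref{hor-sym-theor-mu} to the abelian case via the two identifications $\widetilde{L}_\mu=\breve{L}_\mu$ and $\breve{\scrA}_\mu=-\breve{f}_\mu$, which is precisely your plan, and your specialization of items (1)--(4) is carried out correctly. Where you genuinely diverge is in the proof of the key identity $\breve{\scrA}_\mu=-\breve{f}_\mu$. The paper (Lemma~\ref{lemma:adjunta-conexion discreta es diferencia de la conexion principal} plus Proposition~\ref{prop:f_mu}) works in a local trivialization of $\pi_\mu$, builds a curve in $J_\mu^{-1}(\mu)$ by correcting an arbitrary curve with $\calA_\mu(\cdot,\cdot)^{-1}$, and obtains $\frakA(\delta q_1)-\frakA(\delta q_0)=-\Ad_{g_1^{-1}}\bigl(T\calA_\mu(h^{q_0}_\mu(\delta\tau_0),h^{q_1}_\mu(\delta\tau_1))\bigr)$, after which the adjoint disappears upon pairing with $\mu$ because $g_1\in G_\mu$. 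You instead construct an explicit lift $\bigl(h^{q_0}_\mu(\delta\tau_0)+(\xi_0)_Q(q_0),\,h^{q_1}_\mu(\delta\tau_1)+(\xi_1)_Q(q_1)\bigr)$ tangent to $\calA_\mu^{-1}(\{e\})$ and use the infinitesimal equivariance $T\calA_\mu\bigl((\xi_0)_Q(q_0),(\xi_1)_Q(q_1)\bigr)=\xi_1-\xi_0$ at a point where $\calA_\mu=e$; this is coordinate-free, avoids the adjoint-action bookkeeping entirely, and your observation that only the difference $\xi_1-\xi_0$ is determined is correctly resolved by the horizontality of $\scrA_\mu$ (equivalently, by the fact that $\breve{\scrA}_\mu$ is well defined on the quotient). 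Both arguments are valid; yours is arguably the more transparent one. One small remark: your substitution into item (4) of Theorem~\ref{hor-sym-theor-mu} yields $\breve{\scrA}_\mu^+(\tau_{k-1},\tau_k)+\breve{\scrA}_\mu^-(\tau_k,\tau_{k+1})$ on the right-hand side, whereas the statement as printed has a $+$ superscript on the second term; your version is the one consistent with the forced discrete Euler--Lagrange equations \eqref{forcedELe}, and it would be worth flagging that discrepancy explicitly rather than passing over it.
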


\begin{remark}
In the previous theorem we have used $G$ instead of $G_\mu$ because in \cite{J-L-M-W} the authors reduce by an abelian symmetry group.
\end{remark}

We will dedicate the rest of the section to showing that the similarity of the theorems is not a coincidence, but in fact $\widetilde{L}_\mu = \breve{L}_\mu$ and $\breve{\calA}_\mu = - \breve{f}_\mu$, so that Theorem \ref{hor-sym-theor-mu} may be interpreted as the discrete Routh reduction for the case of a not necessarily abelian symmetry group.

\begin{lemma}\label{lemma:adjunta-conexion discreta es diferencia de la conexion principal}
Let $G_\mu$ be a group of $\mu$-good symmetries of $(Q,L_d)$ for some $\mu \in \frak g^{*}$. If $\calA_\mu$ is the affine discrete connection associated to $J_\mu$ and $\frakA$ is a principal connection on $\pi_\mu : Q \lra Q/G_\mu$, given $(\delta q_0,\delta q_1) \in T_{(q_0,q_1)} J_\mu^{-1}(\mu)$, there exists $g_{1} \in G_\mu$ such that
$$\Ad_{g_{1}^{-1}} \left( T_{(q_0,q_1)} \calA_\mu  (h_\mu^{q_0}(\delta \tau_0),h_\mu^{q_1}(\delta \tau_1)) \right) = \frakA(\delta q_0) - \frakA(\delta q_1)$$
where $Ad_g$ is the adjoint representation and $\delta \tau_i := T_{q_i}\pi_\mu ( \delta q_i )$ with $i=1,2$.
\end{lemma}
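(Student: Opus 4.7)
The plan is to exploit the fact that $\mathcal{A}_\mu$ is identically equal to $e$ on its horizontal submanifold $\Hor_{\calA_\mu}=J_\mu^{-1}(\mu)$, so that its tangent map vanishes on vectors tangent to that submanifold. Concretely, since $(q_0,q_1)\in J_\mu^{-1}(\mu)$ one has $\calA_\mu(q_0,q_1)=e$, so the differential $T_{(q_0,q_1)}\calA_\mu$ takes values in $T_eG_\mu=\frakg_\mu$, and since $\calA_\mu|_{\Hor_{\calA_\mu}}\equiv e$, the hypothesis $(\delta q_0,\delta q_1)\in T_{(q_0,q_1)}J_\mu^{-1}(\mu)$ gives
\[
T_{(q_0,q_1)}\calA_\mu(\delta q_0,\delta q_1)=0.
\]
This is the key observation; everything else is just writing the zero above as a sum of horizontal and vertical contributions with respect to the principal connection $\frakA$.

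Next, I would use $\frakA$ to split each $\delta q_i$: set $\xi_i:=\frakA(\delta q_i)\in\frakg_\mu$ (well-defined since $\frakA$ is a connection on $\pi_\mu:Q\to Q/G_\mu$), so that
\[
\delta q_i = h_\mu^{q_i}(\delta\tau_i) + (\xi_i)_Q(q_i),\qquad i=0,1.
\]
By linearity of the tangent map,
\[
0 = T_{(q_0,q_1)}\calA_\mu\bigl(h_\mu^{q_0}(\delta\tau_0),h_\mu^{q_1}(\delta\tau_1)\bigr) + T_{(q_0,q_1)}\calA_\mu\bigl((\xi_0)_Q(q_0),(\xi_1)_Q(q_1)\bigr).
\]

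The main computation is the second summand. I would use the $G_\mu$-equivariance of $\calA_\mu$: picking curves $g_i(t)$ in $G_\mu$ with $g_i(0)=e$ and $\dot g_i(0)=\xi_i$, the equivariance relation reads
\[
\calA_\mu\bigl(l^Q_{g_0(t)}(q_0),l^Q_{g_1(t)}(q_1)\bigr)=g_1(t)\,\calA_\mu(q_0,q_1)\,g_0(t)^{-1}=g_1(t)g_0(t)^{-1},
\]
using $\calA_\mu(q_0,q_1)=e$. Differentiating at $t=0$ yields
\[
T_{(q_0,q_1)}\calA_\mu\bigl((\xi_0)_Q(q_0),(\xi_1)_Q(q_1)\bigr)=\xi_1-\xi_0.
\]
Substituting back,
\[
T_{(q_0,q_1)}\calA_\mu\bigl(h_\mu^{q_0}(\delta\tau_0),h_\mu^{q_1}(\delta\tau_1)\bigr)=\xi_0-\xi_1=\frakA(\delta q_0)-\frakA(\delta q_1),
\]
and the claim follows taking $g_1=e$, for which $\Ad_{g_1^{-1}}$ is the identity on $\frakg_\mu$ (in the more general formulation one absorbs the translation $T_e L_{\calA_\mu(q_0,q_1)^{-1}}$ into the Ad-factor, but on $J_\mu^{-1}(\mu)$ this is trivial).

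The only non-routine step is the differentiation of the equivariance identity to obtain the vertical contribution $\xi_1-\xi_0$; this is where one must be careful that $\calA_\mu(q_0,q_1)=e$ so that the right-hand side $g_1(t)\calA_\mu(q_0,q_1)g_0(t)^{-1}$ collapses to $g_1(t)g_0(t)^{-1}$, which is precisely what delivers the sign and linear structure that matches $\frakA(\delta q_0)-\frakA(\delta q_1)$. The rest is bookkeeping with the horizontal/vertical splitting.
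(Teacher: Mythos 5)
Your proof is correct, and it takes a genuinely different route from the paper's. The paper works in a local trivialization $Q\simeq (Q/G_\mu)\times G_\mu$, writes $\frakA$ in local form, builds curves tangent to the horizontal lifts, projects them into $J_\mu^{-1}(\mu)$ using $\calA_\mu$, and differentiates the resulting group-valued expression; the factor $\Ad_{g_1^{-1}}$ arises there from translating a tangent vector at the group component $g_1$ of $q_1$ back to the identity. Your argument is coordinate-free and rests on two cleaner observations: first, that $\calA_\mu$ is constant (equal to $e$) on $J_\mu^{-1}(\mu)=\Hor_{\calA_\mu}$, so $T_{(q_0,q_1)}\calA_\mu$ annihilates $T_{(q_0,q_1)}J_\mu^{-1}(\mu)$; second, the infinitesimal form of the equivariance $\calA_\mu(l^Q_{g_0(t)}(q_0),l^Q_{g_1(t)}(q_1))=g_1(t)\calA_\mu(q_0,q_1)g_0(t)^{-1}$ evaluated at a point where $\calA_\mu(q_0,q_1)=e$, which gives $T_{(q_0,q_1)}\calA_\mu\bigl((\xi_0)_Q(q_0),(\xi_1)_Q(q_1)\bigr)=\xi_1-\xi_0$. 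Combined with the $\frakA$-splitting $\delta q_i=h_\mu^{q_i}(\delta\tau_i)+(\xi_i)_Q(q_i)$, $\xi_i=\frakA(\delta q_i)$, and linearity of the tangent map, this yields the identity with $g_1=e$, which certainly satisfies the existential statement. What your approach buys is economy and a sharper conclusion: the adjoint factor is superfluous at points of the zero level set, so the lemma holds with $\Ad_{g_1^{-1}}=\mathrm{id}$; this is consistent with the paper, since in the only place the lemma is used (the proof that $\breve{f}_\mu=-\breve{\scrA}_\mu$) the factor is immediately absorbed via $\Ad^*_{g_1^{-1}}\mu=\mu$ for $g_1\in G_\mu$, and your version would let that step be skipped. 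What the paper's computation buys, by contrast, is an explicit description of how a general tangent vector to $J_\mu^{-1}(\mu)$ arises from horizontal data in a trivialization, which is not needed for the statement itself.
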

\begin{proof}
We work in a local trivialization of the $G_\mu$-principal bundle $\pi_\mu:Q\lra Q/G_\mu$. Hence, $Q = (Q/G_\mu) \times G_\mu$, $q = (\tau,g)$, $\delta q = (\delta \tau,\delta g)$ and the principal connection is given by
$$\frakA(\delta q) = \widetilde{\frakA}(\tau) \delta \tau + g^{-1} \cdot \delta g$$
where $g^{-1} \cdot \delta g = T_g L_{g^{-1}} ( \delta g)$ (where $L_g$ denotes the left translation of $G$ on itself) and $\widetilde{\frakA} : T_\tau(Q/G_\mu) \lra \frakg_\mu$ is a linear map.
	
In this context, the involved elements are
\begin{eqnarray*}
h_\mu^{q_i}(\delta \tau_i) & = & (\tau_i,g_i,\delta \tau_i,\delta g_i), \quad \frakA(h_\mu^{q_i} (\delta \tau_i)) = 0  \\
(q_i,\delta q_i) & = & (\tau_i,g_i,\delta \tau_i,h_i), 
\end{eqnarray*}
for some $h_i \in T_{g_i}G_\mu$, $i = 0,1$. 

Since $\widetilde{\frakA}(\tau_i) \delta \tau_i = - g_i^{-1} \cdot \delta g_i$, for $i = 0,1$,
\begin{equation}\label{eq:breveF-calA-1}
\begin{split}
\frakA(\delta q_1) - \frakA(\delta q_0) &= \widetilde{\frakA}(\tau_1) \delta \tau_1 + g_1^{-1} \cdot h_1 - \widetilde{\frakA}(\tau_0) \delta \tau_0 - g_0^{-1} \cdot h_0 \\
&= -g_1^{-1} \cdot \delta g_1 + g_1^{-1} \cdot h_1 - (- g_0^{-1} \cdot \delta g_0) - g_0^{-1} \cdot h_0 \\
&= g_1^{-1} \cdot (h_1 - \delta g_1) - g_0^{-1} \cdot (h_0 - \delta g_0).
\end{split}
\end{equation}
	
Consider curves $q_i^h(t) = (\tau_i^h(t),g_i^h(t))$ such that $q_i^h(0) = q_i$ and $\dot{q}_i^h(0) = h_\mu^{q_i}(\delta \tau_i)$. Then, since $J_\mu^{-1}(\mu) = \Hor_{\calA_\mu}$, by definition of affine discrete connection,
\[
\left( q_0^h(t) , l_{\calA_{\mu}(q_0^h(t),q_1^h(t))^{-1}}^Q (q_1^h(t)) \right) \in J_\mu^{-1}(\mu).
\]
and we have a curve contained in $J_\mu^{-1}(\mu)$ given by
\[
\gamma(t) := \left( (\tau_0^h(t) , g_0^h(t)) , (\tau_1^h(t),\mathcal{A}_{\mu}(q_0^h(t),q_1^h(t))^{-1} g_1^h(t)) \right).
\]
	
Note that $\gamma(0) = (q_0,q_1)$ and $\dot{\gamma}(0) \in T_{(q_0,q_1)}J_\mu^{-1}(\mu)$ projects to $(\delta \tau_0,\delta \tau_1)$. An expression for $\dot{\gamma}(0)$ is
\[
\dot{\gamma}(0) = \left( (\delta \tau_0,\delta g_0) , \left( \delta \tau_1, \frac{d}{dt} \mathcal{A}_{\mu}(q_0^h(t),q_1^h(t))^{-1} g_1^h(t) \bigg|_{t=0} \right) \right).
\]

In addition, following the notation presented at the beginning, $h_0 = \delta g_0$ and
\[
h_1 = \frac{d}{dt} \mathcal{A}_{\mu}(q_0^h(t),q_1^h(t))^{-1} g_1^h(t) \bigg|_{t=0}.
\]
	
If we denote by $m : G_\mu \times G_\mu \lra G_\mu$ the multiplication in $G_\mu$ and
\[
\dot{a} := T_{(q_0,q_1)} \calA_{\mu}  (h_{\mu}^{q_0}(\delta \tau_0),h_{\mu}^{q_1}(\delta \tau_1)),
\]
we have (see, for example, \cite{Michor}, Lemma 4.2)
\[
\begin{split}
\frac{d}{dt} \mathcal{A}_{\mu}(q_0^h(t),q_1^h(t))^{-1} g_1^h(t) \bigg|_{t=0} &= T_{(e,g_1)} m (- \dot{a},\delta g_1) \\
&= - T_e R_{g_1} (\dot{a}) + T_{g_1} L_e (\delta g_1) \\
&= - T_e R_{g_1} (\dot{a}) + \delta g_1,
\end{split}
\]
where $L$ and $R$ are the left and right translations on the Lie group $G_\mu$, respectively, and we have used that if $\nu : G_\mu \lra G_\mu$ is the inversion $g \mapsto g^{-1}$, then $T_e\nu = - 1_{T_eG_\mu}$.
	
Finally, combining this last equation with (\ref{eq:breveF-calA-1}),
\[
\begin{split}
\frakA(\delta q_1) - \frakA(\delta q_0) &= T_{g_1} L_{g_1^{-1}} \left( - T_e R_{g_1} (\dot{a}) + \delta g_1 - \delta g_1 \right)  - T_{g_0}L_{g_0^{-1}}  \left( \delta g_0 - \delta g_0 \right) \\
&= T_{g_1} L_{g_1^{-1}}  \left( - T_e R_{g_1} (\dot{a}) \right) \\
&= - \Ad_{g_1^{-1}} \left( T_{(q_0,q_1)} \mathcal{A}_{\mu}  (h^{q_0}(\delta \tau_0),h^{q_1}(\delta \tau_1)) \right).
\end{split}
\]
\end{proof}

\begin{proposition}\label{prop:f_mu}
The force $ \breve{f}_\mu$ coincides with the 1-form $-\breve{\scrA}_\mu$, that is,
\[
\breve{f}_\mu = - \breve{\scrA}_\mu.
\]
\end{proposition}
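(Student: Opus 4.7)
The plan is to start from the explicit formula for $\breve{f}_\mu$ provided by Lemma \ref{lemma:fmu en terminos de mu} and then convert it into the expression defining $\breve{\scrA}_\mu$ by invoking Lemma \ref{lemma:adjunta-conexion discreta es diferencia de la conexion principal}, exploiting the fact that pairing with $\mu$ kills the adjoint factor produced by that lemma.

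First, I fix $(\tau_0,\tau_1) \in (Q/G_\mu) \times (Q/G_\mu)$ and $(\delta\tau_0,\delta\tau_1) \in T_{(\tau_0,\tau_1)}((Q/G_\mu) \times (Q/G_\mu))$, choose $q_0 \in \pi_\mu^{-1}(\tau_0)$, and set $q_1 := \overline{h_{d,\mu}^{q_0}}(\tau_1)$, so that $(q_0,q_1) \in \Hor_{\calA_\mu} = J_\mu^{-1}(\mu)$. Lemma \ref{lemma:fmu en terminos de mu} then gives
$$\breve{f}_\mu(\tau_0,\tau_1)(\delta\tau_0,\delta\tau_1) = \left\langle \mu,\ T_{(q_0,q_1)}\calA_\mu\bigl(h_\mu^{q_0}(\delta\tau_0),\,h_\mu^{q_1}(\delta\tau_1)\bigr)\right\rangle.$$
Next, I pick the tangent vector $(\delta q_0,\delta q_1) \in T_{(q_0,q_1)} J_\mu^{-1}(\mu)$ (projecting to $(\delta\tau_0,\delta\tau_1)$) together with the element $g_1 \in G_\mu$ furnished by Lemma \ref{lemma:adjunta-conexion discreta es diferencia de la conexion principal}, which satisfy
$$\Ad_{g_1^{-1}}\!\left( T_{(q_0,q_1)}\calA_\mu(h_\mu^{q_0}(\delta\tau_0),\, h_\mu^{q_1}(\delta\tau_1))\right) = \frakA(\delta q_0) - \frakA(\delta q_1).$$

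Pairing both sides with $\mu$ and using that $\Ad_{g_1}^*\mu = \mu$ (because $g_1 \in G_\mu$ lies in the coadjoint isotropy of $\mu$), I obtain
$$\left\langle \mu,\ T_{(q_0,q_1)}\calA_\mu(h_\mu^{q_0}(\delta\tau_0),\, h_\mu^{q_1}(\delta\tau_1))\right\rangle = \langle \mu, \frakA(\delta q_0)\rangle - \langle \mu, \frakA(\delta q_1)\rangle = \frakA_\mu(q_0)(\delta q_0) - \frakA_\mu(q_1)(\delta q_1).$$
By definition of $\scrA_\mu = \pr_2^* \frakA_\mu - \pr_1^* \frakA_\mu$, the right-hand side equals $-\scrA_\mu(q_0,q_1)(\delta q_0,\delta q_1)$. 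Recalling from the discussion preceding Theorem \ref{4-teor:red-routh-Leok} that the restriction of $\scrA_\mu$ to $J_\mu^{-1}(\mu)$ is $G_\mu$-invariant and annihilates the vertical vectors of $\widetilde{\pi}_\mu$, and therefore drops to $\breve{\scrA}_\mu$ on $(Q/G_\mu)\times(Q/G_\mu)$, I conclude
$$\breve{f}_\mu(\tau_0,\tau_1)(\delta\tau_0,\delta\tau_1) = -\breve{\scrA}_\mu(\tau_0,\tau_1)(\delta\tau_0,\delta\tau_1),$$
as required. No step is a real obstacle here: all the technical work is packaged in Lemma \ref{lemma:adjunta-conexion discreta es diferencia de la conexion principal}, and the only conceptual point is observing that the unwanted adjoint factor disappears precisely because we pair against $\mu$ and $g_1$ belongs to its coadjoint isotropy subgroup.
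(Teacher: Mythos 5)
Your proof is correct and follows essentially the same route as the paper: both rest on Lemma \ref{lemma:fmu en terminos de mu} and Lemma \ref{lemma:adjunta-conexion discreta es diferencia de la conexion principal}, with the adjoint factor killed by pairing against $\mu$ and using that $g_1$ lies in the coadjoint isotropy subgroup; you merely run the chain of equalities starting from $\breve{f}_\mu$ instead of from $\breve{\scrA}_\mu$. The signs all check out.
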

\begin{proof}
Let us consider $(\delta q_0,\delta q_1) \in T_{(q_0,q_1)} J_\mu^{-1}(\mu)$ and $T_{q_i}\pi_\mu ( \delta q_i) = \delta \tau_i$.
Then,
\begin{eqnarray*}
\breve{\scrA}_{\mu}(\tau_0,\tau_1)(\delta \tau_0,\delta \tau_1) \hspace{-.5pc} & = & \hspace{-.5pc} (\pr_2^* \frakA_\mu - \pr_1^* \frakA_\mu)(q_0,q_1)(\delta q_0,\delta q_1) \\
& = & \hspace{-.5pc} \langle \mu , \frakA(\delta q_1) \rangle - \langle \mu , \frakA(\delta q_0) \rangle \\
& = & \hspace{-.5pc} \langle \mu , \frakA(\delta q_1) - \frakA(\delta q_0) \rangle \\
& \stackrel{\text{Lemma \ref{lemma:adjunta-conexion discreta es diferencia de la conexion principal}}}{=} & \hspace{-.5pc} \langle \mu , - \Ad_{g_{1}^{-1}} \left( T_{(q_0,q_1)} \mathcal{A}_{\mu}  (h_{\mu}^{q_0}(\delta \tau_0),h_{\mu}^{q_1}(\delta \tau_1)) \right) \rangle \\
& = & \hspace{-.5pc} - \langle \Ad_{g^{-1}}^* \mu , T_{(q_0,q_1)} \mathcal{A}_{\mu}  (h_{\mu}^{q_0}(\delta \tau_0),h_{\mu}^{q_1}(\delta \tau_1)) \rangle \\
& = & \hspace{-.5pc} - \langle \mu , T_{(q_0,q_1)} \mathcal{A}_{\mu}  (h_{\mu}^{q_0}(\delta \tau_0),h_{\mu}^{q_1}(\delta \tau_1)) \rangle \\
& \stackrel{\text{Lemma \ref{lemma:fmu en terminos de mu}}}{=} & \hspace{-.5pc} - \breve{f}_\mu(\tau_0,\tau_1)(\delta \tau_0,\delta \tau_1),
\end{eqnarray*}
where we have used that $Ad_{g_{1}^{-1}}^* \mu = \mu$, since $g_{1}^{-1} \in G_\mu$.
\end{proof}

\begin{corollary}\label{cor:f_breve is routh}
The force $\breve{f}_\mu$ is a Routh force. Explicitly,
\[
-d\breve{f}_\mu = \pr_2^* \beta_\mu - \pr_1^* \beta_\mu,
\]
where $\beta_\mu$ is the $2$-form on $Q/G_\mu$ given by dropping $d\frakA_\mu$ to the quotient.
\end{corollary}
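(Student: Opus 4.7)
The strategy is to reduce the statement to a property of the $1$-form $\scrA_\mu = \pr_2^*\frakA_\mu - \pr_1^*\frakA_\mu$ on $Q\times Q$ via the identification $\breve{f}_\mu = -\breve{\scrA}_\mu$ established in Proposition \ref{prop:f_mu}. From that identification we have $-d\breve{f}_\mu = d\breve{\scrA}_\mu$, so it suffices to show that $d\breve{\scrA}_\mu$ equals $\pr_2^*\beta_\mu - \pr_1^*\beta_\mu$ on $(Q/G_\mu)\times(Q/G_\mu)$.

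The first key step is to verify that $d\frakA_\mu$ is $G_\mu$-basic, so that it does indeed descend to a well-defined $2$-form $\beta_\mu$ on $Q/G_\mu$ with $\pi_\mu^*\beta_\mu = d\frakA_\mu$. The $G_\mu$-invariance of $d\frakA_\mu$ follows from that of $\frakA_\mu$ (noted in the remark preceding Theorem \ref{4-teor:red-routh-Leok}). Horizontality is a short Cartan-formula computation: for any $\xi\in\frakg_\mu$,
\[
i_{\xi_Q}d\frakA_\mu = \calL_{\xi_Q}\frakA_\mu - d(i_{\xi_Q}\frakA_\mu),
\]
where the Lie derivative vanishes by invariance and $i_{\xi_Q}\frakA_\mu(q) = \langle\mu,\frakA(\xi_Q(q))\rangle = \langle\mu,\xi\rangle$ is a constant, so its differential also vanishes.

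Next, I would compute directly on $Q\times Q$ that
\[
d\scrA_\mu = \pr_2^*d\frakA_\mu - \pr_1^*d\frakA_\mu = \pr_2^*\pi_\mu^*\beta_\mu - \pr_1^*\pi_\mu^*\beta_\mu = (\pi_\mu\times\pi_\mu)^*\bigl(\pr_2^*\beta_\mu - \pr_1^*\beta_\mu\bigr),
\]
using that $\pi_\mu\circ\pr_i = \pr_i\circ(\pi_\mu\times\pi_\mu)$ with the appropriate interpretation of each $\pr_i$.

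Finally, I would transport this identity down to $(Q/G_\mu)\times(Q/G_\mu)$ via the diffeomorphism $\alpha_\mu$ of Corollary \ref{alpha-mu}. Writing $\iota:J_\mu^{-1}(\mu)\hookrightarrow Q\times Q$ and $\pi_\mu^d:J_\mu^{-1}(\mu)\to J_\mu^{-1}(\mu)/G_\mu$, the construction of $\breve{\scrA}_\mu$ gives $\iota^*\scrA_\mu = (\alpha_\mu\circ\pi_\mu^d)^*\breve{\scrA}_\mu$, so applying $d$ and using $\alpha_\mu\circ\pi_\mu^d = (\pi_\mu\times\pi_\mu)\circ\iota$ together with the displayed formula for $d\scrA_\mu$ yields $(\pi_\mu^d)^*\alpha_\mu^*\,d\breve{\scrA}_\mu = (\pi_\mu^d)^*\alpha_\mu^*\bigl(\pr_2^*\beta_\mu - \pr_1^*\beta_\mu\bigr)$; injectivity of these pullbacks then gives $d\breve{\scrA}_\mu = \pr_2^*\beta_\mu - \pr_1^*\beta_\mu$, as required. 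The only subtlety is the bookkeeping in this last step, verifying that the pullback of $\breve{\scrA}_\mu$ under $\alpha_\mu$ genuinely agrees with the drop of $\iota^*\scrA_\mu$ used in \cite{J-L-M-W}; once that is done, the conclusion is automatic.
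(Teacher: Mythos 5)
Your proposal is correct and follows essentially the same route as the paper: both reduce the claim to the identity $\breve{f}_\mu = -\breve{\scrA}_\mu$ from Proposition \ref{prop:f_mu} and then verify via Cartan's magic formula that $d\frakA_\mu$ is horizontal (hence basic, since invariance is inherited from $\frakA_\mu$), so that it drops to $\beta_\mu$ on $Q/G_\mu$. The only difference is that you spell out the bookkeeping of transporting $d\scrA_\mu = \pr_2^*\pi_\mu^*\beta_\mu - \pr_1^*\pi_\mu^*\beta_\mu$ down through $\alpha_\mu\circ\widetilde{\pi}_\mu$, a step the paper treats as immediate.
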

\begin{proof}
The only thing that remains is to check that $d\frakA_\mu$ drops to the quotient. Given $q \in Q$, $\xi \in \Lie(G_\mu)$ and $\delta q \in T_qQ$, we have, using Cartan's Magic Formula,
\[
i_{\xi_Q} (d\frakA_\mu) = \calL_{\xi_Q} \frakA_{\mu} - d (i_{\xi_Q} \frakA_{\mu}) = \calL_{\xi_Q} \frakA_{\mu} - d(\langle \mu , \xi \rangle) = \calL_{\xi_Q} \frakA_{\mu} = 0,
\]
where, in the last equality, we have used that $\frakA_\mu$ is $G_\mu$-invariant.

Therefore, $d\frakA_{\mu}$ annihilates vertical vectors and drops to a $2$-form on $Q/G_\mu$.
\end{proof}

\begin{example}
Now we compute the form $\beta_\mu$ for the example we have been analyzing. By \eqref{eq:A_conec}, we have that $\frakA(\delta q) = (0,\delta x - \nu \delta y,0)$ for $\delta q \in T_qQ$. Then,
$$\frakA_\mu(\delta q) = \langle \mu , \frakA(\delta q) \rangle = \mu_2 (\delta x - \nu \delta y).$$
	
As a $1$-form, $\frakA_\mu = \mu_2 (dx - \nu \ dy)$. This implies that $d\frakA_\mu = 0$ and, therefore, $\beta_\mu \equiv 0$.
\end{example}

To complete the comparison, the only thing that remains is checking that both reduced Lagrangians are the same function.

\begin{proposition}
The reduced Lagrangian $\breve{L}_{\mu}$ is the restriction of $L_d$ to the submanifold $J_\mu^{-1}(\mu)$ dropped to the quotient $Q/G_{\mu} \times Q/G_{\mu}$. That is, $\breve{L}_\mu = \widetilde{L}_\mu$.
\end{proposition}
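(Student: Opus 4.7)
The plan is to compare both functions by unpacking their definitions and showing they agree pointwise on $Q/G_\mu \times Q/G_\mu$. Since both are smooth functions on the same space, pointwise equality suffices.

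First I would recall from the diagram preceding Theorem \ref{hor-sym-theor-mu} that $\breve{L}_\mu = \hat{L}_\mu \circ \calY_\mu$, where $\calY_\mu(\tau_0,\tau_1) = (\rho_\mu(q_0,e),\tau_1)$ for any $q_0 \in \pi_\mu^{-1}(\tau_0)$, and $\hat{L}_\mu(\rho_\mu(q_0,e),\tau_1) = \check{L}_\mu(q_0,e,\tau_1) = L_d \circ (\widetilde{\Phi}_\mu)^{-1}(q_0,e,\tau_1)$. Next, I would invert $\widetilde{\Phi}_\mu$ explicitly: unwinding the definition of $\widetilde{\Psi}_{\calA_\mu}$ restricted to $Q \times \{e\} \times Q/G_\mu$ gives $(\widetilde{\Phi}_\mu)^{-1}(q_0,e,\tau_1) = (q_0,\overline{h_{d,\mu}^{q_0}}(\tau_1))$. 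Hence
\[
\breve{L}_\mu(\tau_0,\tau_1) = L_d\bigl(q_0,\overline{h_{d,\mu}^{q_0}}(\tau_1)\bigr).
\]

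The key observation is that $\Hor_{\calA_\mu} = J_\mu^{-1}(\mu)$ by construction of the affine discrete connection $\calA_\mu$ associated to $J_\mu$. Therefore the pair $(q_0,\overline{h_{d,\mu}^{q_0}}(\tau_1))$ belongs to $J_\mu^{-1}(\mu)$ and projects under $\pi_\mu \times \pi_\mu$ to $(\tau_0,\tau_1)$. By definition of $\widetilde{L}_\mu$ as $L_d|_{J_\mu^{-1}(\mu)}$ dropped to the quotient $J_\mu^{-1}(\mu)/G_\mu \simeq Q/G_\mu \times Q/G_\mu$ (via the diffeomorphism $\alpha_\mu$ of Corollary \ref{alpha-mu}), we conclude
\[
\widetilde{L}_\mu(\tau_0,\tau_1) = L_d\bigl(q_0,\overline{h_{d,\mu}^{q_0}}(\tau_1)\bigr) = \breve{L}_\mu(\tau_0,\tau_1).
\]

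The only point requiring a brief verification is that the right-hand side is independent of the choice of $q_0 \in \pi_\mu^{-1}(\tau_0)$; this follows because any other choice is of the form $l_g^Q(q_0)$ with $g \in G_\mu$, the discrete horizontal lift is $G_\mu$-equivariant (so $\overline{h_{d,\mu}^{l_g^Q(q_0)}}(\tau_1) = l_g^Q(\overline{h_{d,\mu}^{q_0}}(\tau_1))$), and $L_d$ is $G$-invariant hence $G_\mu$-invariant. This step is essentially immediate from the properties collected in Section \ref{sec:SDMC}, so I do not anticipate any real obstacle; the proof is essentially a matter of recognizing that both constructions use the same horizontal representative of a $G_\mu$-orbit inside $J_\mu^{-1}(\mu)$.
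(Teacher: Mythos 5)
Your proposal is correct and follows essentially the same route as the paper's proof: unwind $\breve{L}_\mu = \hat{L}_\mu \circ \calY_\mu$ down to $L_d(q_0,\overline{h_{d,\mu}^{q_0}}(\tau_1))$ via $\widetilde{F}_1^\mu$ and then use $\Hor_{\calA_\mu} = J_\mu^{-1}(\mu)$ to identify this with $\widetilde{L}_\mu(\tau_0,\tau_1)$. Your added check that the result is independent of the choice of $q_0 \in \pi_\mu^{-1}(\tau_0)$ is a sensible explicit remark that the paper leaves implicit in the well-definedness of $\calY_\mu$.
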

\begin{proof}
Let us recall $\breve{L}_d$ that is defined as $\breve{L}_\mu = \hat{L}_\mu \circ \calY_\mu$. If $(\tau_0,\tau_1) \in (Q/G_\mu) \times (Q/G_\mu)$, then
\[
\begin{split}
\breve{L}_\mu(\tau_0,\tau_1) &= \hat{L}_\mu \left( \calY_\mu(\tau_0,\tau_1) \right) = \hat{L}_\mu \left( \rho(q_0,e) , \tau_1 \right) \\
&= \check{L}_\mu(q_0,e,\tau_1) = L_d(q_0 , \widetilde{F}^{\mu} _1(q_0,e,\tau_1)),
\end{split}
\]
where $q_0 \in \pi_\mu^{-1}(\tau_0)$.

Finally,
\[
\widetilde{F}^{\mu}_1(q_0,e,\tau_1) = l_e^Q \left( \overline{h_{d,\mu}^{q_0}}(\tau_1) \right) = \overline{h_{d,\mu}^{q_0}}(\tau_1) = \pr_2(h_{d,\mu}^{q_0}(\tau_1)) = \pr_2(q_0,q_1),
\]
where $(q_0,q_1) \in \Hor_{\mathcal{A}_{\mu}}=J_\mu^{-1}(\mu) $ and $\pi_\mu(q_1) = \tau_1$. Therefore,
\[
\check{L}_\mu(\tau_0,\tau_1) = L_d(q_0,q_1),
\]
where $(\tau_0,\tau_1)=(\pi_\mu(q_0),\pi_\mu(q_1))$ and $(q_0,q_1) \in \Hor_{\mathcal{A}_{\mu}}$.
\end{proof}

We have, then, the following commutative diagram:
$$\xymatrixcolsep{5pc}\xymatrixrowsep{4pc}\xymatrix{
	& & \rr \\
	J_d^{-1}(\mu) \ar@/^/[urr]^{L_\mu} \ar[d]_-{\widetilde{\pi}_\mu} & & \\
	J_d^{-1}(\mu)/G_\mu \ar[r]^-{\Phi_\mu} & (Q \times \{ e \})/G_\mu \ar@/^/[uur]^-{\hat{L}_\mu} & (Q/G_\mu) \times (Q/G_\mu) \ar[l]_-{\calY_\mu} \ar[uu]_-{\breve{L}_\mu}
}$$	

The previous discussion shows that Theorem \ref{4-teor:red-routh-Leok} is recovered when an abelian symmetry group is considered in Theorem \ref{hor-sym-theor-mu}, so that the latter may be interpreted as a version of discrete Routh reduction for non abelian symmetry groups.

\section{Symplectic structures on the reduced space}

This section begins with the following observation: according to Corollary \ref{cor:f_breve is routh}, the force $\breve{f}_\mu$ obtained after applying a discrete Routh reduction process to a system $(Q,L_d)$ is a Routh force generated by a $2$-form $\beta_\mu$ on $Q/G_\mu$. That is,
\[
-d\breve{f}_\mu = \pr_2^* \beta_\mu - \pr_1^* \beta_\mu.
\]

Hence, we are in the hypotheses of Proposition \ref{regularity}, which says that the $2$-form
\[
\breve{\omega}^+ := (\ff_{\breve{f}_\mu}^+ \breve{L}_\mu)^* (\omega_{Q/G_\mu} - \pi_{Q/G_\mu}^* \beta_\mu)
\]
is preserved by the flow $\bF_{\breve{L}_\mu,\breve{f}_\mu}$ of $(Q/G_\mu,\breve{L}_\mu,\breve{f}_\mu)$, whose existence we will assume\footnote{If the flow exists for the original system, then it also exists for the reduced one.}, where $\ff_{\breve{f}_\mu}^+ \breve{L}_\mu : Q/G_\mu \times Q/G_\mu \lra T^*(Q/G_\mu)$ is the forced discrete Legendre transform given by
\[
\ff_{\breve{f}_\mu}^+ \breve{L}_\mu := (\tau_1 , D_2\breve{L}_\mu(\tau_0,\tau_1) + \breve{f}_\mu^+(\tau_0,\tau_1))
\]
and $\omega_{Q/G_\mu}$ is the canonical symplectic structure on $T^*(Q/G_\mu)$.

We wonder now if $\breve{\omega}^+$ is a symplectic structure, which is equivalent to the regularity of the reduced system, according to Proposition \ref{regularity}. To study this, we will see that $\breve{\omega}^+$ coincides with symplectic structure that arises after applying the Symplectic Reduction Theorem of Marsden--Weinstein, concluding that the reduced system is regular.

We begin noticing that if $L_d$ is regular, we have the symplectic manifold $(Q \times Q, \omega_{L_d})$ and, applying the Symplectic Reduction Theorem (see, for example, Theorem 4.3.1 of \cite{Foundations}), we have the following result:

\begin{proposition}
Let $G_\mu$ be a symmetry group of $(Q,L_d)$ with $L_d$ regular and $G_\mu$-regular and $Q$ a connected manifold. If  $\mu \in J_\mu(Q\times Q)$, where $J_\mu : Q \times Q \lra \frakg_\mu^*$ is the associated discrete momentun map, then there exists a unique symplectic structure $\omega_{L_d,\mu}^r$ on $J_\mu^{-1}(\mu)/G_\mu$ such that
\begin{equation}\label{MW-d}
\widetilde{\pi}_\mu^* \omega_{L_d,\mu}^r = \iota_\mu^* \omega_{L_d},
\end{equation}
where $\iota_\mu : J_\mu^{-1}(\mu) \hookrightarrow Q \times Q$ is the canonical inclusion.
\end{proposition}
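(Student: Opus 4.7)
The plan is to obtain $\omega_{L_d,\mu}^r$ as an instance of the classical Marsden--Weinstein symplectic reduction applied to the symplectic manifold $(Q\times Q,\omega_{L_d})$ equipped with the diagonal $G_\mu$-action and the momentum map $J_\mu$. Concretely, one has to verify the hypotheses of the Marsden--Weinstein theorem (see, e.g., Theorem 4.3.1 of \cite{Foundations}) in this discrete setting and then invoke that theorem.

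First I would verify the structural ingredients. Because $G$ acts freely and properly on $Q$ and $G_\mu\subset G$ is a closed subgroup, the diagonal action of $G_\mu$ on $Q\times Q$ is also free and proper. The $G$-invariance of $L_d$ gives that the $1$-forms $\theta^\pm$ are $G$-invariant, hence $\omega_{L_d}=-d\theta^\pm$ is $G$-invariant, and in particular $G_\mu$-invariant. The $G_\mu$-equivariance of $J_\mu$ with respect to the coadjoint action is part of Proposition \ref{momentun map}, and by the very definition of $G_\mu$ the restriction of the coadjoint action to $G_\mu$ fixes $\mu$, so $G_\mu$ acts on $J_\mu^{-1}(\mu)$. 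Finally, the assumption that $L_d$ is regular and $G_\mu$-regular implies, via the argument behind Proposition \ref{momentun map}(3) applied to $J_\mu$, that $\mu$ is a regular value and $J_\mu^{-1}(\mu)\subs Q\times Q$ is an embedded submanifold.

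The step I expect to be the main obstacle is checking that $J_\mu$ is genuinely a momentum map for $\omega_{L_d}$, i.e. that for every $\xi\in\frakg_\mu$,
\[
i_{\xi_{Q\times Q}}\omega_{L_d} = d J_\mu^\xi, \quad\text{where}\quad J_\mu^\xi(q_0,q_1):=\langle J_\mu(q_0,q_1),\xi\rangle.
\]
Writing $\xi_{Q\times Q}(q_0,q_1)=(\xi_Q(q_0),\xi_Q(q_1))$ and using the coordinate expression \eqref{eq:omegaLd} for $\omega_{L_d}$ together with the two expressions $J_\mu^\xi=D_2L_d\cdot\xi_Q\circ\pr_2=-D_1L_d\cdot\xi_Q\circ\pr_1$ coming from the $G$-invariance of $L_d$ (evaluated on $\xi\in\frakg_\mu$), one computes both sides and shows they coincide; this is the discrete analogue of the classical computation that identifies $J_\mu$ as a momentum map for the pulled-back symplectic form, and I would carry it out working locally in coordinates $(q_0^i,q_1^j)$.

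Having established the hypotheses, the Marsden--Weinstein theorem applies verbatim: the quotient $J_\mu^{-1}(\mu)/G_\mu$ carries a unique symplectic form $\omega_{L_d,\mu}^r$ characterized by $\widetilde{\pi}_\mu^*\omega_{L_d,\mu}^r=\iota_\mu^*\omega_{L_d}$. Uniqueness follows from surjectivity of $T\widetilde{\pi}_\mu$ on each tangent space of $J_\mu^{-1}(\mu)/G_\mu$, well-definedness from the $G_\mu$-invariance of $\iota_\mu^*\omega_{L_d}$ together with the observation that $\ker T\widetilde{\pi}_\mu$ is spanned by the infinitesimal generators $\xi_{Q\times Q}$ with $\xi\in\frakg_\mu$, which lie in the kernel of $\iota_\mu^*\omega_{L_d}$ (by the momentum map identity and the fact that $J_\mu^\xi$ is constant on $J_\mu^{-1}(\mu)$). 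Closedness and nondegeneracy of $\omega_{L_d,\mu}^r$ are then standard consequences of the construction.
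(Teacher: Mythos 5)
Your proposal is correct and follows exactly the route the paper takes: the paper offers no proof beyond the remark that the result follows ``applying the Symplectic Reduction Theorem (see, e.g., Theorem 4.3.1 of \cite{Foundations})'' to $(Q\times Q,\omega_{L_d})$ with the diagonal $G_\mu$-action and momentum map $J_\mu$. Your version simply spells out the hypothesis checks (the momentum-map identity $i_{\xi_{Q\times Q}}\omega_{L_d}=dJ_\mu^\xi$ in particular follows cleanly from Cartan's formula applied to the $G$-invariant $1$-form $\theta^+$, avoiding the coordinate computation), which is a welcome addition rather than a deviation.
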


\begin{corollary}\label{M-W-reduction}
Let $\alpha_\mu:J_\mu^{-1}(\mu)/G_\mu \lra     (Q/G_\mu) \times (Q/G_\mu)$ be the diffeomorphism defined on Corollary \ref{alpha-mu}. If $\omega_\mu^d:=( \alpha_\mu^{-1})^* \omega_{L_d,\mu}^r$ then $((Q/G_\mu) \times (Q/G_\mu),\omega_\mu^d)$ is a symplectic manifold.
\end{corollary}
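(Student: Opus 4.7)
The plan is to observe that this corollary is an essentially immediate consequence of the preceding proposition together with the elementary fact that the pullback of a symplectic form by a diffeomorphism is again symplectic. The preceding proposition supplies the symplectic form $\omega_{L_d,\mu}^r$ on $J_\mu^{-1}(\mu)/G_\mu$, and Corollary \ref{alpha-mu} establishes that $\alpha_\mu$ (hence $\alpha_\mu^{-1}$) is a diffeomorphism. Therefore pulling back along $\alpha_\mu^{-1}$ transports the symplectic structure across to the model space $(Q/G_\mu) \times (Q/G_\mu)$, and the whole content of the corollary reduces to checking the two defining properties of a symplectic form separately.

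For closedness, I would use that exterior differentiation commutes with pullback, so that $d\omega_\mu^d = d(\alpha_\mu^{-1})^*\omega_{L_d,\mu}^r = (\alpha_\mu^{-1})^*(d\omega_{L_d,\mu}^r) = 0$. For non-degeneracy, given $(\tau_0,\tau_1) \in (Q/G_\mu) \times (Q/G_\mu)$ and a tangent vector $X$ with $\omega_\mu^d(\tau_0,\tau_1)(X,Y) = 0$ for every $Y$, the definition of pullback yields $\omega_{L_d,\mu}^r(\alpha_\mu^{-1}(\tau_0,\tau_1))(T\alpha_\mu^{-1}(X),T\alpha_\mu^{-1}(Y)) = 0$ for all $Y$; since $T_{(\tau_0,\tau_1)}\alpha_\mu^{-1}$ is a linear isomorphism, $T\alpha_\mu^{-1}(Y)$ sweeps out the whole tangent space at $\alpha_\mu^{-1}(\tau_0,\tau_1)$, so non-degeneracy of $\omega_{L_d,\mu}^r$ forces $T\alpha_\mu^{-1}(X) = 0$ and hence $X = 0$.

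No step presents a genuine obstacle; the serious work has already been discharged in the Marsden--Weinstein reduction producing $\omega_{L_d,\mu}^r$ (via the preceding proposition) and in the identification of the reduced space $J_\mu^{-1}(\mu)/G_\mu$ with $(Q/G_\mu) \times (Q/G_\mu)$ (Corollary \ref{alpha-mu}). The corollary itself is simply the transfer of a symplectic structure along a diffeomorphism.
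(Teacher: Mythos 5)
Your proof is correct and follows the same route as the paper, which simply notes that $\omega_\mu^d = (\delta_\mu)^*\omega_{L_d,\mu}^r$ is the pullback of a symplectic form along the diffeomorphism $\delta_\mu = \alpha_\mu^{-1}$; you merely spell out the closedness and non-degeneracy checks that the paper leaves implicit.
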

\begin{proof}
Since $\alpha_\mu$ is a diffeomorphism with inverse $\delta_\mu$,  $\omega_\mu^d=( \delta_\mu)^* \omega_{L_d,\mu}^r$  is a symplectic structure on $(Q/G_\mu) \times (Q/G_\mu)$.
\end{proof}

We want to compare $\omega_\mu^d$ with the 2-form $\omega_{\breve{f}_\mu^+} = (\ff_{\breve{f}_\mu}^+ \breve{L}_\mu)^* \omega_{Q/G_\mu}$ considered in Section \ref{Sec:forced_strctures}. In order to do so, we consider the following diagram.
$$\xymatrixcolsep{5pc}\xymatrixrowsep{4pc}\xymatrix{
J_\mu^{-1}(\mu) \ar[d]_-{\widetilde{\pi}_\mu} \ar@{^{(}->}[r] & Q \times Q \ar[r]^-{\ff^+L_d} & T^*Q \ar@{-->}[d]^-? \\
J_\mu^{-1}(\mu)/G_\mu \ar[r]_-{\alpha_\mu} & (Q/G_\mu) \times (Q/G_\mu) \ar[r]_-{\ff_{\breve{f}_\mu}^+ \breve{L}_\mu} & T^*(Q/G_\mu)
}$$

The idea is to define a map $\hat{\pi}^{+} : \ff^+ (J_\mu^{-1}(\mu)) \lra T^{*}(Q/G_\mu)$ such that this diagram is commutative; i.e.,  $\hat{\pi}^{+}(\ff^+L_d(q_0,q_1)) = \ff_{\breve{f}_\mu}^+ \breve{L}_\mu(\tau_0,\tau_1)$, where $(q_0,q_1) \in J_\mu^{-1}(\mu)$ and $(\tau_0,\tau_1) = \widetilde{\pi}_\mu(q_0,q_1)$.

Let $\frakA_\mu$ be the $G_\mu$-invariant $1$-form on $Q$ considered in \eqref{3-eq:frakA_mu}. If $[\cdot]$ represent $\cdot$ as a $1$-form on $Q/G_\mu$, we define $\hat{\pi}^+ : \ff^+L_d (J_\mu^{-1}(\mu)) \lra T^*(Q/G_\mu)$ as
$$\hat{\pi}^{+}(q,\alpha) := (\pi_\mu(q),[\alpha - \frakA_\mu(q)]).$$

\begin{proposition}
The map $\hat{\pi}^{+}$
 is well defined and the diagram results commutative.  
\end{proposition}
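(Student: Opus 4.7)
The plan has two parts, matching the two claims in the statement.

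\emph{Well-definedness.} For $[\alpha-\frakA_\mu(q)]$ to make sense as a covector at $\pi_\mu(q)$, I must show that $\alpha-\frakA_\mu(q)\in T^*_q Q$ annihilates $\ker T_q\pi_\mu=\{\xi_Q(q):\xi\in\frakg_\mu\}$. So I would fix a preimage $(q_0,q_1)\in J_\mu^{-1}(\mu)$ of $(q,\alpha)$ under $\ff^+L_d$, so that $q=q_1$ and $\alpha=D_2L_d(q_0,q_1)$, and then compute, for $\xi\in\frakg_\mu$, that $\alpha(\xi_Q(q_1))=J_\mu(q_0,q_1)(\xi)=\langle\mu,\xi\rangle$ by the very definition of the discrete momentum map and the hypothesis $(q_0,q_1)\in J_\mu^{-1}(\mu)$, while $\frakA_\mu(q_1)(\xi_Q(q_1))=\langle\mu,\frakA(\xi_Q(q_1))\rangle=\langle\mu,\xi\rangle$ because $\frakA$ is a principal connection. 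The two values coincide, so the bracket is legitimate, and the resulting covector depends only on $(q,\alpha)$, not on the choice of preimage.

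\emph{Commutativity.} Fix $(q_0,q_1)\in J_\mu^{-1}(\mu)$, set $\tau_i:=\pi_\mu(q_i)$, and evaluate both sides on an arbitrary $\delta\tau_1\in T_{\tau_1}(Q/G_\mu)$. For the left-hand side I would use the $\frakA$-horizontal lift $h^{q_1}_\mu(\delta\tau_1)$ as the representative: since $\frakA$ kills horizontal vectors, $\frakA_\mu(q_1)(h^{q_1}_\mu(\delta\tau_1))=0$, so the LHS collapses to $D_2L_d(q_0,q_1)(h^{q_1}_\mu(\delta\tau_1))$.

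For the right-hand side I would differentiate $\breve{L}_\mu(\tau_0,\tau_1)=L_d(q_0,\overline{h_{d,\mu}^{q_0}}(\tau_1))$ along a curve $\tau_1(t)$ representing $\delta\tau_1$. The resulting curve $q_1(t):=\overline{h_{d,\mu}^{q_0}}(\tau_1(t))$ stays inside $J_\mu^{-1}(\mu)\cap(\{q_0\}\times Q)$ and covers $\tau_1(t)$; its tangent $\delta q_1^d$ therefore projects to $\delta\tau_1$ and can be decomposed as $\delta q_1^d=h^{q_1}_\mu(\delta\tau_1)+\eta_Q(q_1)$ for a unique $\eta\in\frakg_\mu$. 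Expanding $D_2L_d(q_0,q_1)$ on this decomposition yields $D_2\breve{L}_\mu(\tau_0,\tau_1)(\delta\tau_1)=D_2L_d(q_0,q_1)(h^{q_1}_\mu(\delta\tau_1))+\langle\mu,\eta\rangle$ (using $J_\mu(q_0,q_1)=\mu$). Then Lemma~\ref{lemma:fmu en terminos de mu} gives $\breve{f}_\mu^+(\tau_0,\tau_1)(\delta\tau_1)=\langle\mu,D_2\calA_\mu(q_0,q_1)(h^{q_1}_\mu(\delta\tau_1))\rangle$. Because $\calA_\mu(q_0,q_1(t))\equiv e$ along the curve, $D_2\calA_\mu(q_0,q_1)(\delta q_1^d)=0$, hence $D_2\calA_\mu(q_0,q_1)(h^{q_1}_\mu(\delta\tau_1))=-D_2\calA_\mu(q_0,q_1)(\eta_Q(q_1))=-\eta$, where the last equality is obtained by differentiating the equivariance $\calA_\mu(q_0,l^Q_g(q_1))=g\,\calA_\mu(q_0,q_1)=g$ at $g=e$ in the direction $\eta$. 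Thus $\breve{f}_\mu^+(\tau_0,\tau_1)(\delta\tau_1)=-\langle\mu,\eta\rangle$, and adding the two contributions on the right cancels the $\langle\mu,\eta\rangle$ discrepancy exactly, reproducing the LHS.

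The delicate point is keeping straight two distinct lifts of $\delta\tau_1$ to $T_{q_1}Q$: the $\frakA$-horizontal lift dictated by the principal connection, and the tangent to $\overline{h_{d,\mu}^{q_0}}$ dictated by the affine \emph{discrete} connection $\calA_\mu$. They differ by a vertical vector $\eta_Q(q_1)$, and the main content of the argument is to show (via the equivariance of $\calA_\mu$) that the $\langle\mu,\eta\rangle$ correction coming from the chain rule on $\breve{L}_\mu$ is precisely the negative of what $\breve{f}_\mu^+$ contributes, so that the sum matches the LHS independently of the chosen representative.
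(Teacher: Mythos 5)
Your proof is correct, and the well-definedness part is essentially identical to the paper's (both show that $\alpha-\frakA_\mu(q)$ kills $\xi_Q(q)$ by pairing the momentum condition $J_\mu(q_0,q_1)=\mu$ against $\frakA(\xi_Q(q))=\xi$). For commutativity, however, you take a genuinely different route. The paper invokes Proposition \ref{prop:f_mu} ($\breve{f}_\mu=-\breve{\scrA}_\mu$, itself resting on the local-trivialization computation of Lemma \ref{lemma:adjunta-conexion discreta es diferencia de la conexion principal}) to write $\breve{f}_\mu^+(\tau_0,\tau_1)(\delta\tau_1)=-\frakA_\mu(q_1)(\delta q_1)$, and combines this with the chain-rule identity $D_2L_d(q_0,q_1)(\delta q_1)=D_2\breve{L}_\mu(\tau_0,\tau_1)(\delta\tau_1)$ coming from $L_d=\breve{L}_\mu\circ\widetilde{\pi}_\mu$ on $J_\mu^{-1}(\mu)$; the two terms then assemble into $\hat{\pi}^+\circ\ff^+L_d$. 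You instead bypass Proposition \ref{prop:f_mu} entirely: starting from Lemma \ref{lemma:fmu en terminos de mu}, you isolate the vertical discrepancy $\eta_Q(q_1)$ between the $\frakA$-horizontal lift and the tangent to the discrete horizontal lift, evaluate $D_2L_d$ on it via the momentum map, and compute $D_2\calA_\mu(q_0,q_1)(\eta_Q(q_1))=\eta$ from the equivariance of $\calA_\mu$ — which is in effect a coordinate-free re-derivation of the $\delta\tau_0=0$ slice of Lemma \ref{lemma:adjunta-conexion discreta es diferencia de la conexion principal}. What your approach buys is self-containedness and explicit bookkeeping of which lift of $\delta\tau_1$ is in play at each step (a point the paper's proof glosses over, since its $\delta q_1$ must simultaneously be the one appearing in \eqref{eq:fmu breve +} and be tangent to $J_\mu^{-1}(\mu)$ for the chain rule to apply); what the paper's buys is brevity, since the hard geometric content has already been packaged into Proposition \ref{prop:f_mu}. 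Both arguments are sound and the final identity is lift-independent precisely because, as the first part shows, the combination $\alpha-\frakA_\mu(q)$ annihilates vertical vectors.
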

\begin{proof}
The form $\alpha_q - \frakA_\mu(q)$ defines a $1$-form on $Q/G_\mu$. Indeed, if $\alpha_q \in \ff^+(J_\mu^{-1}(\mu))$, i.e., $\alpha_q = D_2L_d(q_0,q_1)$ with $J_\mu (q_0,q_1) = \mu$ we obtain that
$$\alpha_q(\xi_Q(q)) - \frakA_\mu(q)(\xi_Q(q)) = \langle D_2L_d(q_0,q_1) , \xi_Q(q) \rangle - \langle \mu , \xi \rangle = J_\mu(q_0,q_1)\xi - \langle \mu , \xi \rangle = 0. $$

To check that the diagram commutes, we first recall that, by Proposition \ref{prop:f_mu}, $\breve{f}_\mu = - \breve{\scrA}_\mu$, so that
\begin{equation}\label{eq:fmu breve +}
\breve{f}_\mu^+(\tau_0,\tau_1)(\delta \tau_1) = - \langle \mu , \frakA(\delta q_1) \rangle = - \frakA_\mu(q_1)(\delta q_1).
\end{equation}

Since, $L_d = \breve{L}_\mu \circ \widetilde{\pi}_\mu$, we have that $dL_d = \widetilde{\pi}_\mu^* d\breve{L}_\mu$ and
$$dL_d(q_0,q_1)(0,\delta q_1) = (\widetilde{\pi}_\mu^* (d\breve{L}_\mu))(q_0,q_1)(0,\delta q_1) = d\breve{L}_\mu(\tau_0,\tau_1)(0,\delta \tau_1),$$
implying that $D_2L_d(q_0,q_1)(\delta q_1) = D_2 \breve{L}_\mu(\tau_0,\tau_1)(\delta \tau_1)$. 

Finally, 
\[ D_2L_d(q_0,q_1)(\delta q_1) - \frakA_\mu(q_1)(\delta q_1) = D_2\breve{L}_\mu(\tau_0,\tau_1)(\delta \tau_1) + \breve{f}_\mu^+(\tau_0,\tau_1)(\delta \tau_1). \]
and the diagram is commutative.
\end{proof}

\begin{remark}
Notice that equation \eqref{eq:fmu breve +} seems to imply that $\breve{f}_\mu^+$ does not depend on $\tau_0$. However, this is not necessarily true, since the $q_1$ that appears must satisfy not only the equation $\pi_\mu(q_1) = \tau_1$, but also the condition that $(q_0,q_1)$ belongs to $J_\mu^{-1}(\mu)$.
\end{remark}

Let us now compute $(\hat{\pi}^{+})^* \theta_{Q/G_\mu}$, where $\theta_{Q/G_\mu}$ is the canonical $1$-form on\\ $T^*( {Q/G_\mu})$. Recalling the definition of the canonical $1$-form on cotangent bundles, given $w_{\alpha_q} \in T_{\alpha_q} T^*Q$,
\begin{equation*}
\begin{split}
(\hat{\pi}^{+})^* \theta_{Q/G_\mu}(\alpha_q)(w_{\alpha_q}) %&= \theta_{Q/G_\mu}(\hat{\pi}^{+}(\alpha_q))\left( T\hat{\pi}^{+} ( w_{\alpha_q}) \right) \\
&= \hat{\pi}^{+}(\alpha_q) \left( T\pi_{Q/G_\mu}  \left( T\hat{\pi}^{+}  w_{\alpha_q} \right) \right) \\
%&= \hat{\pi}^{+}(\alpha_q) \left( T(\pi_{Q/G_\mu} \circ \hat{\pi}^{+}) ( w_{\alpha_q}) \right) \\
&= \hat{\pi}^{+}(\alpha_q) \left( T(\pi_\mu \circ \pi_Q) ( w_{\alpha_q}) \right) \\
&=  \left( \alpha_q - \frakA_\mu(q) \right) \left( T\pi_Q  (w_{\alpha_q}) \right) \\
%&= \alpha_q \left( T\pi_Q ( w_{\alpha_q}) \right) - \frakA_\mu(q)\left( T\pi_Q ( w_{\alpha_q}) \right) \\
&=\theta_Q(\alpha_q)(w_{\alpha_q}) - (\pi_Q^* \frakA_\mu)(w_{\alpha_q}).
\end{split}
\end{equation*}

Hence, $(\hat{\pi}^{+})^* \theta_{Q/G_\mu} = \theta_Q - \pi_Q^* \frakA_\mu$ and
\begin{equation}\label{eq:pihat pullback de omega QGmu}
(\hat{\pi}^{+})^* \omega_{Q/G_\mu} = \omega_Q - \pi_Q^* (-d\frakA_\mu).
\end{equation}

This shows that $(\ff_{\breve{f}_\mu}^+ \breve{L}_\mu)^* \omega_{Q/G_\mu}$ is not exactly $\omega_\mu^d$, but we can modify the canonical structure $\omega_{Q/G_\mu}$ to get another structure whose pullback by the forced discrete Legendre transform $\ff_{\breve{f}_\mu}^+ \breve{L}_\mu$ does agree with $\omega_\mu^{d}$.

Noticing that
\[
\pi_Q^* (- d\frakA_\mu) = -\pi_Q^* \pi_\mu^* \beta_\mu = -(\hat{\pi}^+)^* \left( \pi_{Q/G_\mu}^* \beta_\mu \right),
\]
returning to \eqref{eq:pihat pullback de omega QGmu},
\[
(\hat{\pi}^{+})^* \omega_{Q/G_\mu} = \omega_Q + (\hat{\pi}^+)^* \left( \pi_{Q/G_\mu}^* \beta_\mu \right).
\]

Therefore,
\[
(\hat{\pi}^{+})^* \left( \omega_{Q/G_\mu} - \pi_{Q/G_\mu}^* \beta_\mu \right) = \omega_Q.
\]

\begin{proposition}
Let $\omega_{Q/G_\mu}$ be the canonical symplectic structure on $T^*(Q/G_\mu)$ and $\pi_{Q/G_\mu} : T^*(Q/G_\mu) \lra Q/G_\mu$  the cotangent projection. Then,
$$(\ff_{\breve{f}_\mu}^+ \breve{L}_\mu)^* \left( \omega_{Q/G_\mu} - \pi_{Q/G_\mu}^* \beta_\mu \right) = \omega_\mu^d.$$
\end{proposition}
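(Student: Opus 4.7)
The plan is to prove the identity by pulling both sides back along the surjective submersion $\alpha_\mu\circ\widetilde{\pi}_\mu : J_\mu^{-1}(\mu)\lra (Q/G_\mu)\times(Q/G_\mu)$. Since this map is a surjective submersion (a quotient by the $G_\mu$-action composed with a diffeomorphism), the induced map on differential forms is injective, so equality upstairs will imply equality on $(Q/G_\mu)\times(Q/G_\mu)$.

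First I would exploit the commutative diagram just established, namely
\[
\hat{\pi}^+\circ (\ff^+L_d)\circ \iota_\mu = (\ff_{\breve{f}_\mu}^+\breve{L}_\mu)\circ \alpha_\mu\circ \widetilde{\pi}_\mu,
\]
to compute
\[
(\alpha_\mu\circ\widetilde{\pi}_\mu)^*(\ff_{\breve{f}_\mu}^+\breve{L}_\mu)^*\bigl(\omega_{Q/G_\mu}-\pi_{Q/G_\mu}^*\beta_\mu\bigr) = \iota_\mu^*(\ff^+L_d)^*(\hat{\pi}^+)^*\bigl(\omega_{Q/G_\mu}-\pi_{Q/G_\mu}^*\beta_\mu\bigr).
\]
The displayed equation preceding the proposition gives $(\hat{\pi}^+)^*(\omega_{Q/G_\mu}-\pi_{Q/G_\mu}^*\beta_\mu)=\omega_Q$, and the standard identity (recalled just before Proposition~\ref{prop:omegaf is pullback of omegaQ}) gives $(\ff^+L_d)^*\omega_Q=\omega_{L_d}$. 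Therefore the left-hand side, pulled back by $\alpha_\mu\circ\widetilde{\pi}_\mu$, equals $\iota_\mu^*\omega_{L_d}$.

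For the right-hand side, I would use the definition $\omega_\mu^d=(\alpha_\mu^{-1})^*\omega_{L_d,\mu}^r$ together with the Marsden–Weinstein identity \eqref{MW-d}:
\[
(\alpha_\mu\circ\widetilde{\pi}_\mu)^*\omega_\mu^d = \widetilde{\pi}_\mu^*\alpha_\mu^*(\alpha_\mu^{-1})^*\omega_{L_d,\mu}^r = \widetilde{\pi}_\mu^*\omega_{L_d,\mu}^r = \iota_\mu^*\omega_{L_d}.
\]
The two pullbacks agree, so injectivity of the pullback along $\alpha_\mu\circ\widetilde{\pi}_\mu$ yields the desired equality on $(Q/G_\mu)\times(Q/G_\mu)$.

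The only subtlety I anticipate is justifying injectivity of the pullback: $\widetilde{\pi}_\mu$ is a principal $G_\mu$-bundle projection (hence a surjective submersion), and $\alpha_\mu$ is a diffeomorphism by Corollary~\ref{alpha-mu}, so $\alpha_\mu\circ\widetilde{\pi}_\mu$ is a surjective submersion and the pullback on differential forms is therefore injective. Everything else is bookkeeping with the commutative diagram and previously established identities, so no new computation should be required.
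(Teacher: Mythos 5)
Your argument is correct and is exactly the reasoning the paper intends: the proposition is stated there without an explicit proof, as an immediate consequence of the commutative diagram for $\hat{\pi}^+$, the identity $(\hat{\pi}^+)^*\left(\omega_{Q/G_\mu}-\pi_{Q/G_\mu}^*\beta_\mu\right)=\omega_Q$, the relation $(\ff^+L_d)^*\omega_Q=\omega_{L_d}$, and the Marsden--Weinstein identity \eqref{MW-d}. Your write-up simply makes explicit the step the paper leaves implicit (pulling both sides back along the surjective submersion $\alpha_\mu\circ\widetilde{\pi}_\mu$ and invoking injectivity of that pullback), so it matches the paper's approach.
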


We summarize all these results in a single statement.

\begin{theorem}\label{thm:symplecticity and discrete routh reduction}
Let $(Q/G_\mu,\breve{L}_\mu,\breve{f}_\mu)$ be the FDMS obtained after applying the discrete Routh reduction procedure to a regular discrete Lagrangian system $(Q,L_d)$. Then, the system is regular and the $2$-form
\[
\breve{\omega}^+ := (\ff_{\breve{f}_\mu}^+ \breve{L}_\mu)^* (\omega_{Q/G_\mu} - \pi_{Q/G_\mu}^* \beta_\mu)
\]
is a symplectic structure preserved by the flow of the system, where $\beta_\mu$ is the $2$-form on $Q/G_\mu$ obtained by dropping $d\frakA_\mu$ to the quotient.
\end{theorem}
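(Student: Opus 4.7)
The plan is to assemble the theorem from results already established, with the crucial observation being that $\breve{\omega}^+$ coincides with the Marsden--Weinstein reduced form $\omega_\mu^d$. Once this identification is made, symplecticity, regularity, and flow-preservation each follow from a single earlier result, so the actual work reduces to invoking the right proposition in the right order.

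First I would apply Corollary \ref{cor:f_breve is routh} to conclude that $\breve{f}_\mu$ is a Routh force with potential $\beta_\mu$, placing the reduced system $(Q/G_\mu,\breve{L}_\mu,\breve{f}_\mu)$ squarely within the framework of Section \ref{Sec:forced_strctures}; in particular, $\breve{\omega}^+$ is literally the form $\omega^+$ of Proposition \ref{prop:regularity} applied to this system. The unnumbered Proposition immediately preceding the theorem then provides the bridge
\[
\breve{\omega}^+ \;=\; (\ff_{\breve{f}_\mu}^+\breve{L}_\mu)^{*}\!\left(\omega_{Q/G_\mu} - \pi_{Q/G_\mu}^{*}\beta_\mu\right) \;=\; \omega_\mu^d,
\]
and since $\omega_\mu^d$ is symplectic by Corollary \ref{M-W-reduction}, $\breve{\omega}^+$ is symplectic as well.

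For regularity I would appeal to Proposition \ref{regularity}, which demands that both $\breve{\omega}^+$ and $\breve{\omega}^-$ be symplectic. A brief manipulation using \eqref{omegaf+ - omegaf-} and the Routh condition $-d\breve{f}_\mu = \pr_2^{*}\beta_\mu - \pr_1^{*}\beta_\mu$ shows that $\breve{\omega}^+ = \breve{\omega}^-$ on $(Q/G_\mu)\times(Q/G_\mu)$, so both forms are symplectic and regularity of the reduced system follows. With regularity in hand, Proposition \ref{prop:regularity} applied to the Routh-type FDMS $(Q/G_\mu,\breve{L}_\mu,\breve{f}_\mu)$ yields at once that the flow preserves $\breve{\omega}^+$. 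The single substantive step in this plan is the identification $\breve{\omega}^+ = \omega_\mu^d$; no further obstacle remains, since that identification is precisely what the preceding Proposition delivers, having been obtained there from the explicit construction of $\hat{\pi}^+$ and the commutativity of the diagram linking $\ff^+L_d$, $\widetilde{\pi}_\mu$ and $\ff_{\breve{f}_\mu}^+\breve{L}_\mu$.
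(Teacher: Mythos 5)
Your proposal is correct and follows essentially the same route as the paper: the key step in both is the identification $\breve{\omega}^+ = \omega_\mu^d$ supplied by the unnumbered proposition, after which symplecticity comes from Corollary \ref{M-W-reduction}, regularity from Proposition \ref{regularity}, and preservation from Proposition \ref{prop:regularity}. Your extra observation that $\breve{\omega}^+ = \breve{\omega}^-$ (which follows from \eqref{omegaf+ - omegaf-} together with the Routh condition) is a small improvement, since it makes the appeal to the ``if'' direction of Proposition \ref{regularity} fully airtight, a point the paper only addresses in a subsequent remark.
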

\begin{proof}
The only thing that remained to be checked is the regularity of the system, which is a consequence of the fact that, by the previous proposition, $\breve{\omega}^+ = \omega_\mu^d$. Therefore, $\breve{\omega}^+$ is a symplectic structure and we can apply Proposition \ref{regularity}.
\end{proof}

\begin{remark}
Just as we did in Section \ref{Sec:forced_strctures}, the work we have done with the forced discrete Legendre transform $\ff_{\breve{f}_\mu}^+ \breve{L}_\mu$ can also be applied to $\ff_{\breve{f}_\mu}^- \breve{L}_\mu$.
\end{remark}

\begin{remark}
As in the continuous case, we observe that if $ \omega_{Q/G_\mu}$ is modified by adding the term $\pi_{Q/G_\mu}^* \beta_\mu$, that we may call \emph{discrete magnetic term} (see for example \cite{Foundations}), its pullback by the forced discrete Legendre transform defines a preserved symplectic structure on $(Q/G_\mu) \times (Q/G_\mu)$.
\end{remark}

\section{Numerical experiments with a central potential in the plane}

Here we briefly illustrate how some of the reduction techniques
developed in the previous sections may be applied to a simple
mechanical system and compare the results to those obtained by
applying a generic numerical integrator ---order $4$ Runge--Kutta
method, \texttt{RK4}.

We consider a central potential mechanical system in $\R^2$; such
systems are symmetric under the rotation action of $S^1$ on $\R^2$. A
convenient description is given using polar coordinates or, better,
using the covering map $\R^1 \longrightarrow S^1$. The resulting system,
has configuration manifold $Q := \R_{>0}\times\R$ and the Lagrangian
is
\begin{equation}\label{eq:lagrangian_PC-def-short}
	L(r,\eta,\dot{r},\dot{\eta}) :=
	\frac{m}{2} (\dot{r}^2 + r^2 \dot{\eta}^2) - V(r).
\end{equation}
The $S^1$ symmetry becomes an $\R$ symmetry acting by translation in
the second coordinate of $Q$. Notice that the quotient map
$\pi:Q\longrightarrow Q/\R$ can be identified with the projection
$\pr_1:Q\longrightarrow \R_{>0}$.

As mentioned in Remark \ref{remark:discrete vs cont routh reduction}, a reduction procedure that can be applied
to this system is described
in \cite{ar:marsden_ratiu_scheurle:2000:reduction_theory_and_the_lagrange-routh_equations}. This
procedure requires the use of two constructions derived from the
kinetic part of $L$: the mechanical connection and the locked inertia
tensor. In our setting, the connection $1$-form is
$\frakA(r,\eta) = d\eta$; whereas, for $(r,\eta)\in Q$, the locked
inertia tensor $\mathfrak{I}_{(r,\eta)}:\frakg \longrightarrow \frakg^*$ is
$\mathfrak{I}_{(r,\eta)}(\xi) = m r^2\xi\, 1^*$. Given
$\mu\, 1^* \in\frakg^* = \R^*$ we define
$\frakA_\mu := \mu \frakA = \mu d\eta$; we also define the amended potential
$V_\mu(r,\eta) := V(r,\eta) + \frac{1}{2} \mu
(\mathfrak{I}_{(r,\eta)})^{-1}(\mu) = V(r) + \frac{\mu^2}{2mr^2}$,
that naturally defines a function on $\R_{>0}$, denoted by
$\mathfrak{V}_\mu(r)$. According to Theorem III.9
in \cite{ar:marsden_ratiu_scheurle:2000:reduction_theory_and_the_lagrange-routh_equations},
the reduced system has configuration space $Q/\R\simeq \R_{>0}$ and
its dynamics is determined by a variational principle involving the
reduced Routhian $\mathfrak{R}^\mu:T(Q/G)\longrightarrow \R$ given by
\begin{equation}\label{eq:reduced_routhian-central_potential-def-short}
	\mathfrak{R}^\mu(r,\dot{r}) := \frac{1}{2} \norm{(r,\dot{r})}^2_{Q/\R}-\mathfrak{V}_\mu(r) = \frac{m}{2}\dot{r}^2 - V(r) - \frac{\mu^2}{2m
		r^2}
\end{equation}
and a certain $2$-form $\beta_\mu$ on $\R_{>0}$ which, for dimensional
reasons, must vanish. Then, the reduced system is a Lagrangian system,
whose Lagrangian function is $\mathfrak{R}^\mu$. Hence, the equation
of motion ---the Euler--Lagrange Equation for $\mathfrak{R}^\mu$--- is
\begin{equation}\label{eq:reduced_equation_of_motion-central_potential-short}
	\begin{split}
		0 = \frac{d}{dt} \frac{\partial \mathfrak{R}^\mu(r(t),\dot{r}(t))}{\partial \dot{r}} - \frac{\partial \mathfrak{R}^\mu(r(t),\dot{r}(t))}{\partial r} =
		m\ddot{r}(t) - \frac{\mu^2}{m r(t)^3}+V'(r(t)).
	\end{split}
\end{equation}
Also as a consequence of being a Lagrangian system with Lagrangian
$\mathfrak{R}^\mu$, its flow preserves the ``energy'' function
\begin{equation}\label{eq:routhian_energy-def-short}
	\mathfrak{E}^\mu(r,\dot{r}) := \frac{m}{2}\dot{r}^2 + V'(r) +
	\frac{\mu^2}{2m r^2},
\end{equation}
as well as the symplectic structure
$\omega_\mu := \left(\ff \mathfrak{R}^\mu\right)^*(\omega_{\R_{>0}}) =
m dr\wedge d\dot{r}$, where $\omega_{\R_{>0}}$ is the canonical
symplectic structure on $T^*\R_{>0}$.

For most potential functions $V:\R_{>0}\longrightarrow \R$,
equation~\eqref{eq:reduced_equation_of_motion-central_potential-short}
can be solved numerically. In what follows, when we talk about the
exact solution
of~\eqref{eq:reduced_equation_of_motion-central_potential-short} we
actually mean the numerical solution provided by \texttt{Mathematica}'s
adaptive and high-order integrator \texttt{NDSolve}.

For the numerical examples that follow we take
\begin{equation}
	\label{eq:the_sextic_potential-def}
	V(r) := \alpha r^2(r^2-\beta)^2,
\end{equation}
for $\alpha := 0.1$ and $\beta := 2$, as well as $m := 1$. We use
initial conditions $(x_0,y_0) := (0,0.2)$ and
$(\dot{x}_0,\dot{y}_0) := (0.57,0.01)$, that lead to
$(r_0,\eta_0) := (0.2,1.5708)$ and
$(\dot{r}_0.\dot{\eta}_0) := (0.01,-2.85)$, as well as
$\mu := -0.114$. Last, we use a time-step $h := 0.2$.

In order to compare with the variational integrator considered later,
we integrate
numerically~\eqref{eq:reduced_equation_of_motion-central_potential-short}
using the generic integrator
\texttt{RK4}. Figure~\ref{fig:RK4_error_t_0_100-1.4.4-short} shows the
evolution of the (global) error in $r$ and
Figure~\ref{fig:MPR_RK4_exact_evolution_energy_t_0_100-1.4.4-short}
shows the evolution of $\mathfrak{E}^\mu$ (small brown squares), both
using \texttt{RK4}.
\begin{figure}[htb]
	\centering \subfigure[Error in $r$ (\texttt{RK4})\label{fig:RK4_error_t_0_100-1.4.4-short}]
	{\includegraphics[scale=.4]{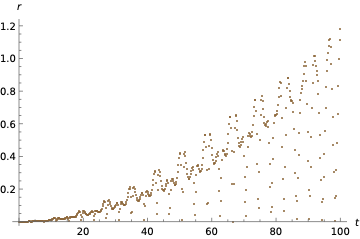}}
	\goodgap \subfigure[Error in $r$ (\texttt{MP})\label{fig:MPR_error_t_0_100-1.4.4-short}]
	{\includegraphics[scale=.4]{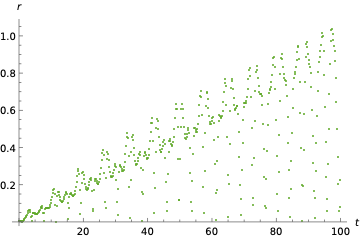}}
	\caption{Error in $r$ in the evolution of the reduced systems
		computed using \texttt{RK4} (initial conditions $r_0 := 0.2$ and
		$\dot{r}_0 := 0.01$) and using \texttt{MP} (initial conditions
		$r_0 := 0.2$ and $r_1 := 0.201$). In both cases, $h := 0.2$.}
	\label{fig:RK4_evolution_t_0_100-short}
\end{figure}

\begin{figure}[htb]
	\centering
	\includegraphics[scale=.5]{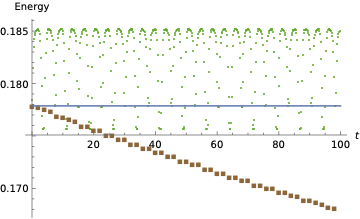}
	\caption{Comparison of the evolution of $\mathfrak{E}^\mu$: exact
		value (straight blue line), \texttt{RK4} (brown squares) and
		\texttt{MP} (small green dots).  In all cases, $h := 0.2$.}
	\label{fig:MPR_RK4_exact_evolution_energy_t_0_100-1.4.4-short}
\end{figure}

Going back to the mechanical system $(Q,L)$, we use a midpoint
discretization to produce the discrete Lagrangian
\begin{equation}\label{eq:L_d^MP-def}
	\begin{split}
		L_d^{MP}(r_0,\eta_0,r_1,\eta_1):=& h L\left(\frac{r_1-r_0}{h},\frac{\eta_1-\eta_0}{h},\frac{r_1+r_0}{2},\frac{\eta_1+\eta_0}{2}\right) \\=&                                                                                  h\bigg( \frac{m}{2} \left( \left(\frac{r_1-r_0}{h}\right)^2+\left(\frac{r_1+r_0}{2}\right)^2\left(\frac{\eta_1-\eta_0}{h}\right)^2\right)
		\\&\phantom{h\bigg(}- V\left(\frac{r_1+r_0}{2}\right)\bigg).
	\end{split}
\end{equation}

\begin{remark}
	This discretization is, by no means, unique. The choice of the
	discrete Lagrangian will affect the approximation order achieved by
	the
	integrator. (See \cite{ar:fernandez_graiffZurita_grillo:2021:error_analysis_of_forced_discrete_mechanical_systems}
	for the local error analysis.)
\end{remark}

As $(Q,L_d^{MP})$ is $\R$-symmetric for the same action as above, the
procedure described in Section \ref{sec:routh}, constructs\footnote{The
	construction described in Section \ref{sec:routh} uses a principal connection on
	$\pi:Q\longrightarrow Q/\R$. The connection that we use has horizontal
	lift $\HLc{(r,\eta)}(\del_r) := \del_r \in T_{(r,\eta)}Q$.} the
reduced forced discrete mechanical system
$(\R_{>0},\breve{L}_d^{MP},\breve{f}_d^{MP})$ where
\begin{equation*}
	\begin{split}
		\breve{L}_d^{MP}(r_0,r_1) =& h\bigg( \frac{m}{2} \left( \left(\frac{r_1-r_0}{h}\right)^2+ \left(\frac{\mu}{m}\right)^2\left(\frac{2}{r_0+r_1}\right)^2\right) - V\left(\frac{r_1+r_0}{2}\right)\bigg),\\
		\breve{f}_d^{MP}(r_0,r_1) =& \frac{8h\mu^2}{m}\frac{d r_0+d r_1}{(r_0+r_1)^3}.
	\end{split}
\end{equation*}

\begin{remark}\label{rem:modified_lagrangian-MP-short}
	It is easy to check that $\breve{f}_d^{MP} = d\gamma_d^{MP}$ for
	\begin{equation*}
		\gamma_d^{MP}(r_0,r_1) := -\frac{4 h \mu^2}{m (r_0+r_1)^2}.
	\end{equation*}
	Thus, as mentioned in Remark \ref{remark:absorbing the force}, the discrete forced mechanical system\\
	$(\R_{>0},\breve{L}_d^{MP},\breve{f}_d^{MP})$ is equivalent to the
	\emph{unforced} discrete mechanical system\\
	$(\R_{>0},\breve{L}_d^{MP}+\gamma_d^{MP})$. An immediate consequence
	---consistent with Theorem \ref{thm:symplecticity and discrete routh reduction}--- is that the flow of the system preserves the
	symplectic structure obtained pulling back the canonical structure
	$\omega_{\R_{>0}}$ by $\ff (\breve{L}_d^{MP}+\gamma_d^{MP})$.
\end{remark}

\begin{remark}
	Comparison of the modified Lagrangian $\breve{L}_d^{MP}+\gamma_d$
	introduced in Remark~\ref{rem:modified_lagrangian-MP-short} with the
	reduced
	Routhian~\eqref{eq:reduced_routhian-central_potential-def-short}
	shows that
	\begin{equation*}
		(\breve{L}_d^{MP}+\gamma_d)(r_0,r_1) = h \mathfrak{R}^\mu\left(\frac{r_1+r_0}{2},\frac{r_1-r_0}{h}\right).
	\end{equation*}
	Thus, in the case of central potentials in $\R^2$, the reduced
	forced discrete mechanical system constructed out of $L_d^{MP}$ is
	equivalent to the Mid Point integrator for the (continuous) reduced
	system associated with $L$. It is interesting to explore other more
	general settings where the reduction and discretization procedures
	commute.
\end{remark}

The numerical integrator \texttt{MP} consists of solving the discrete
equations of motion \eqref{forcedELe} for
$(\R_{>0},\breve{L}_d^{MP},\breve{f}_d^{MP})$. Using the potential $V$
given by~\eqref{eq:the_sextic_potential-def} and the same initial data
as with the \texttt{RK4} integrator,
Figure~\ref{fig:MPR_error_t_0_100-1.4.4-short} shows the (global)
error in $r$ and
Figure~\ref{fig:MPR_RK4_exact_evolution_energy_t_0_100-1.4.4-short}
shows the evolution of $\mathfrak{E}^\mu$ (small green dots), both
using \texttt{MP}.

Comparing Figures~\ref{fig:RK4_error_t_0_100-1.4.4-short}
and~\ref{fig:MPR_error_t_0_100-1.4.4-short} we observe that both
\texttt{RK4} and \texttt{MP} exhibit similar (global) error profiles
in the current setting. On the other hand,
Figure~\ref{fig:MPR_RK4_exact_evolution_energy_t_0_100-1.4.4-short}
shows that \texttt{RK4} and \texttt{MP} fail to preserve
$\mathfrak{E}^\mu$ in quite different ways: whereas the value of
$\mathfrak{E}^\mu$ decreases steadily with \texttt{RK4}, it fluctuates
around the (constant) exact value with \texttt{MP}, a fact that has
been verified for other time-steps and initial conditions. Notice that
this difference is very relevant for the long term simulation of
systems where (approximate) conservation of $\mathfrak{E}^\mu$ is
essential to the qualitative reproduction of the system's behavior.

%%%%%%%%%%%%%%%%%%%%%%%%%%%%%%%%%%%%%%%%%%%%%%%%%%%%%%%%%%%%%%%%%%%%%%%%%%

\section*{Acknowledgments}
This document is the result of research partially supported by grants
from the Universidad Nacional de Cuyo [code 06/C009-T1 and code 06/80020240100069UN],
Universidad Nacional de La Plata [code X915] and CONICET.

%%%%%%%%%%%%%%%%%%%%%%%%%%%%%%%%%%%%%%%%%%%%%%%%%%%%%%%%%%%%%%%%%%%%%%%%

\printbibliography

@article{ar:borda_fernandez_grillo:2013:discrete_second_order_constrained_lagrangian_systems_first_results,
  author = 	 {Nicol{\'a}s Borda and Javier {Fern{\'a}ndez} and Sergio {Grillo}},
  title = 	 {Discrete second order constrained {L}agrangian systems: first results},
  journal = 	 {J. Geom. Mech.},
  year = 	 {2013},
  OPTkey = 	 {},
  volume = 	 {5},
  number = 	 {4},
  pages = 	 {381--397},
  OPTmonth = 	 {},
  OPTannote = 	 {}
}

@article{C-F-T-Z,
	author = {Caruso, M. and Fern\'andez, J. and Tori, C. and Zuccalli, M.},
	title = {Lagrangian reduction of forced discrete
	mechanical systems},
    journal = {J. Phys. A: Math. Theor.},
	fjournal = {Journal of Physics A: Mathematical and Theoretical},
	volume = {56},
	pages = {25},
	year = {2023},
}

@article{C-M-R,
	author = {Cendra, H. and Marsden, J. E. and Ratiu, T. S.},
	title = {Lagrangian Reduction by Stages},
    journal = {Mem. Am. Math. Soc.},
	fjournal = {Memoirs of the American Mathematical Society},
	volume = {152},
	pages = {},
	year = {2001},
}

@article{ar:fernandez_graiffZurita_grillo:2021:error_analysis_of_forced_discrete_mechanical_systems,
	author = 	 {Javier {Fern\'andez} and Sebasti\'an {Graiff Zurita} and
	Sergio {Grillo}},
	title = 	 {Error analysis of forced discrete mechanical systems},
	journal = 	 {J. Geom. Mech.},
	year = 	 {2021},
	OPTkey = 	 {},
	volume = 	 {13},
	OPTnumber = 	 {},
	OPTpages = 	 {533-606},
	OPTmonth = 	 {},
	OPTannote = 	 {},
}

@article{F-T-Z,
	author = {Fern\'andez, J. and Tori, C. and Zuccalli, M.},
	title = {Lagrangian Reduction of Nonholonomic Discrete Mechanical Systems},
    journal = {J. Geom. Mech.},
	fjournal = {Journal of Geometric Mechanics},
	volume = {2},
	year = {2010},
	pages = {69-111},
}

@article{J-L-M-W,
	author = {Jalnapurkar, S. and Leok, M. and Marsden, J. E. and West, M.},
	title = {Discrete Routh Reduction},
    journal = {J. Phys. A: Math. Gen.},
	fjournal = {Journal of Physics A: Mathematical and General},
	volume = {39},
	year = {2006},
	pages = {5521-5544},
}

@article{Lu-Shiou,
	author = {Tzon-Tzer Lu and Sheng-Hua Shiou},
	title = {Inverses of 2 × 2 block matrices},
    journal = {Comput. Math. Appl.},
	fjournal = {Computers \& Mathematics with Applications},
	volume = {43},
	year = {2002},
	pages = {119-129},
}

@article{M-West,
	author = {Marsden, J. E. and West, M.},
	title = {Discrete Mechanics and Variational Integrators},
	journal = {Acta Numerica},
	volume = {10},
	year = {2001},
	pages = {357-514},
}

@article{ar:marsden_ratiu_scheurle:2000:reduction_theory_and_the_lagrange-routh_equations,
	title={Reduction theory and the Lagrange--Routh equations},
	author={Jerrold E. {Marsden} and Tudor S. {Ratiu} and J{\"u}rgen {Scheurle}},
    journal = {J. Math. Phys.},
	fjournal={Journal of Mathematical Physics},
	year={2000},
	volume={41},
	number={6},
	pages={3379--3429},
}

@book{Foundations,
	title = {Foundations of Mechanics},
	author = {Abraham, Ralph and Marsden, Jerrold E.},
	isbn = {},
	series = {},
	year = {1978},
	publisher = {Addison-Wesley Publishing Company},
}

@book{M-R,
	title = {Introduction to Mechanics and Symmetry},
	author = {Marsden, Jerrold E. and Ratiu, Tudor S.},
	isbn = {},
	series = {},
	year = {1999},
	publisher = {Springer-Verlag New York},
}

@book{Michor,
	title = {Topics in Differential Geometry},
	author = {Michor, Peter W.},
	isbn = {},
	series = {Graduate Studies in Mathematics},
	year = {2008},
	publisher = {American Mathematical Society},
}

\end{document}